%2020-02-14
\documentclass[11pt]{amsart}

\usepackage{amsmath,amscd,amssymb,latexsym,color}
\usepackage{MnSymbol}
\usepackage{longtable}

\oddsidemargin = 0cm \evensidemargin = 0cm \textwidth = 16cm

\allowdisplaybreaks

\newcommand{\sm}{\left(\smallmatrix}
\newcommand{\esm}{\endsmallmatrix\right)}
\newcommand{\mat}{\begin{pmatrix}}
\newcommand{\emat}{\end{pmatrix}}
\renewcommand{\c}{\mathfrak{c}}
\renewcommand{\t}{\tau}

\renewcommand{\i}{\infty}
\newcommand{\G}{\Gamma}
\newcommand{\g}{\gamma}

\newcommand{\lt}{\left}
\newcommand{\rt}{\right}
\newcommand{\A}{\mathbb A}
\newcommand{\Q}{\mathbb Q}
\newcommand{\Z}{\mathbb Z}
\newcommand{\C}{\mathbb C}
\newcommand{\R}{\mathbb R}

\renewcommand{\H}{\mathbb H}

\newtheorem{thm}{Theorem}
\newtheorem{lem}[thm]{Lemma}
\newtheorem{cor}[thm]{Corollary}
\newtheorem{prop}[thm]{Proposition}
\newtheorem{exam}[thm]{Example}

\theoremstyle{definition}
\newtheorem{df}[thm]{Definition} 
\newtheorem{rmk}[thm]{Remark}
\numberwithin{equation}{section}
\numberwithin{thm}{section}
\newcommand{\dotcup}{\ensuremath{\mathaccent\cdot\cup}}

\begin{document}

\title[HeckeSystem]{Hecke System of Harmonic Maass Functions and Applications to modular curves of higher genera}

\author{Daeyeol Jeon, Soon-Yi Kang and Chang Heon Kim}
\address{Department of Mathematics Education, Kongju National University, Kongju, 32588 Korea}
\email{dyjeon@kongju.ac.kr}
\address{Department of Mathematics, Kangwon National University, Chuncheon, 24341 Korea} \email{sy2kang@kangwon.ac.kr}
\address{Department of Mathematics, Sungkyunkwan University,
 Suwon, 16419 Korea}
\email{chhkim@skku.edu}

\begin{abstract}  In Monstrous moonshine, genus $0$ property and the notion of replicability are strongly connected. With regards to recent developments of moonshine, we investigate a higher genus generalization of replicability for a general automorphic form. Specifically, we extend the definitions of replicates and a Hecke operator to harmonic Maass functions on modular curves of higher genera to obtain number theoretic generalizations of important results in Monstrous moonshine. Furthermore, we show the utility of the extended notions in yielding uniform proofs for numerous arithmetic properties of Fourier coefficients of modular functions of arbitrary level, which have been proved only for special cases of curves of genus zero or small prime levels.

\end{abstract}
\maketitle

\renewcommand{\thefootnote}%
             {}
 {\footnotetext{
 2010 {\it Mathematics Subject Classification}: 11F03, 11F12, 11F22, 11F25, 11F30, 11F33, 11F37
 \par
 {\it Keywords}: congruence; generating function; harmonic Maass forms; Hecke system; modular functions, moonshine; replicable function, Zagier grid}

\section{Introduction}

Monstrous moonshine, a relation between the Monster group and the Hauptmoduls, was proved by virtue of string theory. This classical moonshine contributed to the discovery of new algebraic structures such as vertex-operator algebras and generalized Kac-Moody algebras.
A resurgence of interest in moonshine began a decade ago, when mock modular forms and Mathieu groups were found to be deeply connected by K3 string theory.  New forms of moonshine are thus speculated to have impact on number theory and mathematical physics at large. In number theory, in particular, they offer rich arithmetic properties of mock modular forms and half-integral weight modular forms; for instance a relation between half-integral weight modular forms and the Birch-Swinnerton-Dyer conjecture.
For a detailed account on moonshine, we refer the reader to a survey on classical moonshine \cite{Gannon} and a recent survey including umbral moonshine \cite{DGOreview}. See also \cite{DMO-p, DMO-o} for non-Monstrous moonshine. 

Moonshine conjecture asserts that the McKay-Thompson series for the Monster group are Hauptmoduls. The conjecture was made by Conway and Norton \cite{CN}  and proved later by Borcherds \cite{B}. In the proof of the conjecture, {\it{replication formulas}} played an important role. Conway and Norton used the notion of {\it{replicates}} of a Hauptmodul and established a replication formula; a sort of recursive relation between Faber polynomials of a Hauptmodul and its replicates. Then they defined the generalized Hecke operator whose action on a Hauptmodul gives the Faber polynomials of the Hauptmodul. Using the replication formulas and properties of the Hecke operator, they could derive a recursion formula satisfied by the Fourier coefficients of a Hauptmodul and its replicates. (See \cite{Koike} for the full statements and proofs.) Borcherds used vertex operator algebras to show that the same recursion formula holds for the coefficients of the corresponding McKay-Thompson series and its replicates. 

Considering the recent developments of moonshine, it is natural to ask for a higher genus generalization for a general automorphic form of the arguments above.  Inferring from the fact that Faber polynomials of a Hauptmodul form a canonical basis of the space of weakly holomorphic modular functions on the modular group of genus $0$, one may expect canonical basis elements of the space of weakly holomorphic modular functions with an arbitrary level would play the role of  Faber polynomials.   

Let $\t$ be a value in the complex upper half plane $\mathbb H$ and $q=e^{2\pi i \t}$. Also, let $N$ be a positive integer and $S$ be a subset of exact divisors of $N$.
Following Koike \cite{Koike}, we denote by $\G=N+S$ the subgroup of ${\rm PSL}_2(\R)$ generated by $\G_0(N)$ and the Atkin-Lehner involutions $W_{Q,N}$ for all $Q\in S$. For convenience, we write $\G_0(N)$ or $N$ for $N+\emptyset$ and $\G_0(N)+$ or $N+$ when $S$ is the set of all Atkin-Lehner involutions.
Niebur \cite{Niebur} showed that for $\G$ of any level $N$, there are certain analytic continuations of Niebur-Poincar\'e series which are harmonic Maass functions on $\G$. We denote them by $j_{\G,n}(\t)=j_{N+S,n}(\t)$ ($n\in\mathbb{N}$), whose Fourier expansions are given by (\cite[Theorem 1]{Niebur} and \cite[Proposition 3.1]{BKLOR}) 
\begin{equation}\label{jnnfour}j_{\G,n}(\t)=q^{-n}-\bar q^n + c_\G(n,0)+\sum_{\ell\geq 1}(c_\G(n,\ell)q^\ell + c_\G(n,-\ell)\bar q^\ell).\end{equation}
Let $J_{\G,n}(\t):=j_{\G,n}(\t)-c_\G(n,0)$ and define  $J_{\G,0}(\t):=1$. According to Bruinier and Funke's work on harmonic Maass forms, the image of the non-holomorphic part of $J_{\G,n}(\t)$ under the differential operator $2i \overline{\frac{\partial}{\partial\bar \t}}$ is a weight 2 cusp form on $\G$.  Thus when the modular curve $X(\G)$ corresponding to $\G$ has genus zero, each $J_{\G,n}(\t)$ does not have a non-holomorphic part, which implies that it is just a weakly holomorphic modular function.
In particular,
$$J_{1,1}(\t)=J(\t)=q^{-1}+196884q+21493760q^2+\cdots$$
is the Hauptmodul on $\mathrm{SL}_2(\mathbb Z)$. 
%=\frac{E_4(\t)^3}{\Delta(\t)} Here $E_{2k}$ is the Eisenstein series of weight $2k$ and $\Delta(\t):=(E_4(\t)^3-E_6(\t)^2)/1728$ is the discriminant function which is a cusp form of weight $12$. 
%
Moreover,  $J_{1,n}(\t)=P_n(J(\t))=q^{-n}+O(q)$, where $P_n$ is the Faber polynomial, the unique polynomial for which $P_n(J(\t))-q^{-n}$ has a $q$-expansion with only strictly positive powers of $q$.  The functions $J_{1,n}(\t)$ ($n\geq 0$) form a basis for the space of weakly holomorphic modular functions on the full modular group and also form a Hecke system as proved in \cite{CN, Koike} by using the replication formula.  In other words, for the normalized Hecke operator $T(n)$, they satisfy that
\begin{equation}\label{hj}J_{1,n}(\t)=J_{1,1}(\t)|T(n)(\t).
\end{equation}
Utilizing this, Asai, Kaneko and Ninomiya \cite{AKN} proved that the Hauptmodul $J$ gives a weight 2 meromorphic modular form representation for the generating function of $J_{1,n}(\t)$.
Namely, if $q_1=e^{2\pi i \t_1}$ and $q_2=e^{2\pi i \t_2}$ for $\t_1,\tau_2\in\mathbb H$, then it holds that
\begin{equation}\label{gj}\sum_{n=0}^\infty J_{1,n}(\t_1)q_2^n=-\frac{1}{2\pi i}\frac{J'(\t_2)}{J(\t_2)-J(\t_1)}.\end{equation}
This well known identity is equivalent to the famous denominator formula for the Monster Lie algebra (or {\it{Koike-Norton-Zagier infinite product identity}}):
\begin{equation}\label{df}
J(\t_1)-J(\t_2)=(q_1^{-1}-q_2^{-1})\prod_{m,n=1}^\infty(1-q_1^mq_2^n)^{c_1(1,mn)}.
\end{equation}
The results above for $\mathrm{SL}_2(\mathbb Z)$ have been generalized to those for congruence subgroups  of genus zero by many mathematicians. (See, for example, \cite{Ye} and \cite{Car} and references therein for results on and generalizations of (\ref{gj}) and (\ref{df}), respectively.)

The main purpose of this paper is to generalize \eqref{hj} and \eqref{gj} for $J_{\G,n}(\t)$ of arbitrary $\G$.

We begin with generalizing the replication formulas for Hauptmoduls of Norton\,\cite{Norton} and Koike\,\cite{Koike}, from which we easily obtain the generalization of \eqref{hj} for $J_{\G,n}$.
Beforehand, we introduce necessary notions and notations. For any prime $p$, we define the group $\G^{(p)}$ as follows: 
\begin{equation}\label{gns}
\G^{(p)}:=N^{(p)}+S^{(p)},
\end{equation}
where $N^{(p)}=N/(p,N)$ and $S^{(p)}$ is the set of all $Q\in S$ which divide $N^{(p)}$.
In addition, for a positive integer $m$ with the prime decomposition $m=p_1p_2\cdots p_r$, we define $\G^{(m)}$ by
\begin{equation}\label{gp}
\G^{(m)}:=\G^{(p_1)(p_2)\cdots(p_r)}.
\end{equation}
Now, we define the {\it $m$-plicate} of $J_{\G,n}$ by
\begin{equation}\label{defp}J_{\G,n}^{(m)}:=J_{\G^{(m)},n}.\end{equation}
Also, we define $J_{\G,\frac{n}{p}}=0$ if $p\nmid n$. The first main theorem is the {\it $p$-plication formula}, which is essential in our construction of a generalized Hecke operator that acts on harmonic Maass functions.
\begin{thm}[{\it{$p$-plication formula}}] \label{p-pli} For any prime $p$, we have that
\begin{equation}\label{ep-pli}
J_{\G,n}|{U^*_p}(\t)+J_{\G,n}^{(p)}(p\t)=J_{\G,pn}(\t)+pJ_{\G,\frac{n}{p}}^{(p)}(\t),
\end{equation}
where the $U^*_p$-operator that acts on a complex valued function $f$ on $\H$ is given by
\begin{equation}\label{ups}f|{U^*_p}(\t)=pf|{U_p}(\t)=\sum_{i=0}^{p-1} f\left( \frac{\t+i}{p} \right).\end{equation} 
\end{thm}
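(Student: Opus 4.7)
\medskip
\textbf{Proof plan for Theorem~\ref{p-pli}.}

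The plan is to show that both sides of (\ref{ep-pli}) are harmonic Maass functions sharing a common group of invariance and having the same principal parts at every cusp, and then to conclude equality by a uniqueness / maximum-principle argument.

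First, I would substitute the Fourier expansion (\ref{jnnfour}) for $J_{\G,n}$ into the definition (\ref{ups}) of $U_p^*$. Averaging $e^{2\pi i\ell(\t+i)/p}$ over $i=0,\dots,p-1$ picks out only exponents $\ell$ divisible by $p$, so the polar part of $J_{\G,n}|U_p^*(\t)$ at $\i$ equals $pq^{-n/p}$ when $p\mid n$ and vanishes otherwise, accompanied by the symmetric antiholomorphic contribution $-p\bar q^{n/p}$ coming from $-\bar q^n$. In parallel $J_{\G,n}^{(p)}(p\t)$ contributes $q^{-pn}-\bar q^{pn}$. On the right, $J_{\G,pn}$ contributes $q^{-pn}-\bar q^{pn}$, and $pJ_{\G,n/p}^{(p)}$ contributes $p(q^{-n/p}-\bar q^{n/p})$ precisely when $p\mid n$. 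The principal parts at $\i$ therefore coincide term-by-term.

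Next, writing $\a:=\mat p&0\\0&1\emat$, I would exhibit a common group of invariance $\widetilde\G$ of finite index in both $\G$ and $\a^{-1}\G^{(p)}\a$, commensurable with $\G_0(Np)$: here $J_{\G,n}^{(p)}(p\t)$ is invariant under $\a^{-1}\G^{(p)}\a$ by direct conjugation, $J_{\G,n}|U_p^*$ is invariant under $\widetilde\G$ by the standard coset bookkeeping for the $U_p$-operator, and the right-hand side summands are \emph{a fortiori} invariant under $\widetilde\G$ because they are already invariant under the larger groups $\G$ and $\G^{(p)}$. Hence the difference of the two sides is a harmonic Maass function on $\widetilde\G$.

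The main obstacle lies in verifying that the principal parts also agree at every cusp of $\widetilde\G$ other than $\i$. This requires tracking how $U_p^*$ and the scaling $\t\mapsto p\t$ redistribute Fourier expansions between the cusps of $\G$ and those of $\G^{(p)}$, combined with the permutation action of the Atkin-Lehner involutions $W_{Q,N}$ and $W_{Q,N^{(p)}}$ on cusps; I expect this bookkeeping to parallel -- and extend to the Atkin-Lehner level -- the computation that Koike carries out for Hauptmoduls \cite{Koike}, with the two cases $p\mid N$ and $p\nmid N$ treated separately. Once the principal parts match at every cusp, the difference of the two sides is a bounded harmonic function on the compactified modular curve $X(\widetilde\G)$ and is therefore constant by the maximum principle; comparing the constant Fourier coefficients at $\i$, both of which vanish by the normalization $J_{\G,n}=j_{\G,n}-c_\G(n,0)$, forces this constant to be $0$ and completes the proof.
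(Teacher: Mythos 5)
Your overall strategy --- match singularities at all cusps and then invoke uniqueness of bounded harmonic functions on the compactified curve --- is a legitimate alternative in principle, and your computation of the principal parts at $i\i$ on both sides is correct. But the proposal has a genuine gap exactly where you flag ``the main obstacle'': the matching of principal parts at the cusps other than $i\i$ is asserted, not carried out, and that step is the entire content of the theorem. How $U_p^*$ and $\t\mapsto p\t$ redistribute the pole of $J_{\G,n}$ among the cusps of your common group $\widetilde\G$ depends delicately on whether $p\mid N$, whether $p^2\mid N$, whether $p\in S$, and whether some $pQ_0\in S$; these are precisely the five cases into which the paper (following Koike) splits the argument, and the answer is genuinely different in each case --- compare the five distinct shapes of the expansion and compression formulas in Propositions \ref{TnN} and \ref{comp}. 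Deferring this to ``bookkeeping I expect to parallel Koike'' leaves the proof incomplete: without it you cannot even list which cusps of $\widetilde\G$ carry poles of $J_{\G,n}|U_p^*$. A secondary point: for the maximum-principle step you need the difference to be \emph{bounded} at every cusp, and for a weight-$0$ harmonic function this requires ruling out a term linear in ${\rm Im}\,\t$ in the constant Fourier coefficient at each cusp, not merely cancelling the exponentially growing terms. This does hold here, because Niebur's expansion at $i\i$ has no such term and $j_{\G,n}$ decays at cusps inequivalent to $i\i$, but it needs to be said for the argument to close.

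For contrast, the paper avoids cusp expansions altogether: it works with the Poincar\'e series $F_{\G,n}(\t,s)$ as a sum over $\G_\i\backslash\G$, uses explicit coset decompositions of $\G_\i\backslash\G_0(M)$ (Lemmas \ref{dec}--\ref{ko}) together with the Atkin--Lehner relations to prove the expansion and compression formulas as \emph{exact} identities of Poincar\'e series for ${\rm Re}(s)>1$, and then specializes by analytic continuation to $s=1$. That route trades your global uniqueness argument for local matrix manipulations, but it never needs to know where the poles sit; if you want to pursue your route instead, the coset analysis you would have to do to locate the poles is essentially the same computation, so little is saved.
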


It is interesting to note that \eqref{ep-pli} takes the identical form as the $p$-plication formula for Faber polynomials of a Hauptmodul in Theorem 1.1 of \cite{Koike}.  We, therefore, adopt Koike's arguments from \cite{Koike} for defining a Hecke operator below.
\begin{df} For any prime number $p$ and positive integer $r$, we define the Hecke operator $T(p^r)$ acting on $J_{\G,n}$ by
\begin{equation}\label{deft}J_{\G,n}|T(p^r)(\t):=\sum_{i=0}^{r} J_{\G,n}^{(p^i)}|{U^*_{p^{r-i}}}(p^i\t).\end{equation}
\end{df}
If $p\nmid N$ and $r=1$, then this definition gives the same action with the classical Hecke operator $T_p$: 
$${pJ_{\G,n}|T_p(\t)=J_{\G,n}|{U^*_p}(\t)+J_{\G,n}|{V_p}(\t)},$$
where $V_p=\sm p&0\\0&1\esm$.  Here, for any $\gamma\in{\rm GL}_2(\R)$, we define $f|\gamma(\t)=f(\gamma\t)$, a linear fractional transformation.  After we extend the definition of the Hecke operator $T(m)$ to any positive integer $m$ in Section 4, we generalize the {\it $p$-plication formula} to the following {\it replication formula}:
\begin{thm}[{\it{replication formula}}] \label{replic} For any positive integer $m$, we have
\begin{align} J_{\G,n}|T(m)(\t)&=\sum_{d|m}J_{\G,n}^{(d)}|U^*_{\frac{m}{d}}(d\t) \label{Hecke3-2}\\
&=\sum_{d|(n,m)}dJ_{\G,\frac{mn}{d^2}}^{(d)}(\t) \label{Hecke3-1}. 
\end{align}
\end{thm}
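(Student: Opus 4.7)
The plan is to prove the two equalities in sequence: (\ref{Hecke3-2}) essentially as a consequence of how $T(m)$ is extended to composite $m$ in Section 4, and (\ref{Hecke3-1}) as the substantive content, derived inductively from Theorem \ref{p-pli}.

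For (\ref{Hecke3-2}), I expect the extension of $T(m)$ in Section 4 to impose Hecke multiplicativity, $T(m_1 m_2) = T(m_1) \circ T(m_2)$ when $(m_1, m_2) = 1$, with the prime-power definition (\ref{deft}) as input. Expanding such a composition uses two basic compatibilities: the plicate operation is multiplicative in coprime indices, $(J^{(a)})^{(b)} = J^{(ab)}$ when $(a, b) = 1$, directly from (\ref{gns})--(\ref{gp}); and the operators $U^*_a$ and $V_b$, where $(f|V_b)(\t) := f(b\t)$, commute when $(a, b) = 1$, as one checks on $q$-expansions. Reindexing the multi-index sum by the total divisor $d = \prod p_j^{i_j}$ of $m$ then collapses it to the single sum $\sum_{d|m} J^{(d)}_{\G, n}|U^*_{m/d}(d\t)$.

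For (\ref{Hecke3-1}), I would first reduce to prime powers $m = p^r$ by multiplicativity of both sides under coprime factorization, and then induct on $r$. The base case $r = 1$ is Theorem \ref{p-pli} rewritten through (\ref{Hecke3-2}):
$$J_{\G,n}|T(p)(\t) = J_{\G,n}|U^*_p(\t) + J^{(p)}_{\G,n}(p\t) = J_{\G,pn}(\t) + p\,J^{(p)}_{\G, n/p}(\t),$$
which is precisely $\sum_{d|(n,p)} d\,J^{(d)}_{\G, pn/d^2}(\t)$, the $d = p$ term vanishing automatically when $p \nmid n$ by the convention $J_{\G, n/p} = 0$. For the inductive step $r \to r+1$, I would take the expansion (\ref{Hecke3-2}) for $m = p^{r+1}$, split off the extreme term $J^{(p^{r+1})}_{\G, n}(p^{r+1}\t)$, and for each remaining summand $J^{(p^i)}_{\G, n}|U^*_{p^{r+1-i}}(p^i \t)$ write $U^*_{p^{r+1-i}} = U^*_{p^{r-i}} \circ U^*_p$ and apply the $p$-plication formula (Theorem \ref{p-pli}, with $\G$ replaced by $\G^{(p^i)}$) to the inner $J^{(p^i)}_{\G, n}|U^*_p$. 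Using the identity $U^*_{p^s}(f|V_p) = p \cdot U^*_{p^{s-1}}(f)$ for $s \geq 1$, which is immediate from $1$-periodicity of $f$, the $V_p$-shifts telescope against the $J^{(p^{i+1})}_{\G, n}(p^{i+1}\t)$ contributions arising from neighboring summands. What remains reshapes into $J_{\G, pn}|T(p^r)(\t) + p\,J^{(p)}_{\G, n/p}|T(p^r)(\t)$; feeding in the induction hypothesis for $T(p^r)$ and carefully reindexing over divisors then yields $\sum_{d|(n, p^{r+1})} d\,J^{(d)}_{\G, p^{r+1} n/d^2}(\t)$.

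The main obstacle will be the careful accounting of plicates and of the $U^*_p$/$V_p$ interactions during the telescoping. In particular, applying the $p$-plication formula to $J^{(p^i)}_{\G, n}$, viewed as $J_{\G^{(p^i)}, n}$, produces $J_{(\G^{(p^i)})^{(p)}, n}$, which must be identified with $J^{(p^{i+1})}_{\G, n}$ via the associativity $(\G^{(p^i)})^{(p)} = \G^{(p^{i+1})}$---again a direct consequence of (\ref{gns})--(\ref{gp}). One also needs to track when auxiliary arguments such as $n/p^2$ fall outside $\mathbb{Z}_{>0}$, so that the convention forces the corresponding $J_{\G, \cdot}$ terms to vanish and match the restricted divisor sum $(n, p^{r+1})$ on the right-hand side. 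Once this bookkeeping is in place, the iterative application of (\ref{ep-pli}) closely parallels Koike's derivation for Faber polynomials in \cite{Koike}, with the harmonic Maass structure entering only through Theorem \ref{p-pli}.
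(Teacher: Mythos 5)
Your overall strategy is the same as the paper's: prove the prime-power case by induction on $r$ using the $p$-plication formula, then pass to general $m$ by coprime multiplicativity together with $(J^{(a)})^{(b)}=J^{(ab)}$ and the commutation of $U^*_a$ with $V_b$ for $(a,b)=1$. The base case, the reduction of \eqref{Hecke3-2} to the definition of $T(m)$, and the reduction of \eqref{Hecke3-1} to prime powers are all fine.

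However, the inductive step as you state it contains a genuine error: the telescoping does \emph{not} reshape the remainder into $J_{\G,pn}|T(p^r)(\t)+p\,J^{(p)}_{\G,n/p}|T(p^r)(\t)$. Carrying out your own plan, the $p$-plication formula applied to $J^{(p^i)}_{\G,n}|U^*_p$ contributes three pieces, and the third piece $-J^{(p^{i+1})}_{\G,n}|V_p|U^*_{p^{r-i}}|V_{p^i}$ splits into two parts: the $i=r$ term cancels the split-off extreme term $J^{(p^{r+1})}_{\G,n}(p^{r+1}\t)$ as you intend, but the terms with $i<r$ survive (via $f|V_p|U^*_{p^s}=p\,f|U^*_{p^{s-1}}$) and assemble into $-p\,J^{(p)}_{\G,n}|T(p^{r-1})(\t)$. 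The correct identity is therefore
\begin{equation*}
J_{\G,n}|T(p^{r+1})(\t)=J_{\G,pn}|T(p^r)(\t)+p\,J^{(p)}_{\G,n/p}|T(p^r)(\t)-p\,J^{(p)}_{\G,n}|T(p^{r-1})(\t),
\end{equation*}
and the version without the last term is simply false: for $p\nmid n$ the left side is $J_{\G,p^{r+1}n}(\t)$ while your right side evaluates to $J_{\G,p^{r+1}n}(\t)+p\,J^{(p)}_{\G,p^{r-1}n}(\t)$. As written, the final ``reindexing over divisors'' cannot produce $\sum_{d\mid(n,p^{r+1})}d\,J^{(d)}_{\G,p^{r+1}n/d^2}(\t)$. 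Once the correction term is restored (which also forces a two-step induction using both $T(p^r)$ and $T(p^{r-1})$, applied to varying indices and varying groups $\G^{(p^i)}$), the argument closes and coincides with the paper's proof, which computes $\bigl(J_{\G,n}|T(p^k)\bigr)|U^*_p(\t)+\bigl(J^{(p)}_{\G,n}|T(p^k)\bigr)(p\t)$ in two ways and cancels the common term $p\,J^{(p)}_{\G,n}|T(p^{k-1})(\t)$ from both sides.
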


From \eqref{Hecke3-1}, we find that $J_{\G,n}$'s form a Hecke system as well.

\begin{cor} \label{ThjN} For any positive integer $n$, we have
\begin{equation}\label{hjN}
J_{\G,n}(\t)=J_{\G,1}|T(n)(\t).
\end{equation}
\end{cor}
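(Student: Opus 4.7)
The plan is a direct specialization of equation (\ref{Hecke3-1}) in Theorem \ref{replic}. I would take the subscript of $J$ on the left of (\ref{Hecke3-1}) to equal $1$ and the Hecke parameter $m$ to equal $n$. Then the gcd appearing in the index set is $(1,n)=1$, so the sum on the right collapses to the single term $d=1$, giving
$$J_{\G,1}|T(n)(\t)=\sum_{d\mid(1,n)}dJ_{\G,n/d^{2}}^{(d)}(\t)=J_{\G,n}^{(1)}(\t).$$

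It then remains only to identify $J_{\G,n}^{(1)}$ with $J_{\G,n}$. By the definition (\ref{defp}) of the $m$-plicate, $J_{\G,n}^{(1)}=J_{\G^{(1)},n}$, and by the definition (\ref{gp}) applied to $m=1$ (whose prime decomposition is empty, $r=0$), the group $\G^{(1)}$ is obtained from $\G$ by no operation at all, i.e.\ $\G^{(1)}=\G$. Substituting gives $J_{\G,1}|T(n)(\t)=J_{\G,n}(\t)$, which is precisely (\ref{hjN}).

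There is essentially no obstacle here beyond reading off Theorem \ref{replic} at the right parameters; all of the work is carried by the replication formula itself. The only minor point worth flagging is the empty-product convention $\G^{(1)}=\G$ implicit in (\ref{gp}), which one should make sure is well defined before invoking it in the final identification.
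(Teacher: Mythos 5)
Your proposal is correct and is exactly how the paper obtains Corollary \ref{ThjN}: it is stated as an immediate specialization of \eqref{Hecke3-1} with the index set $\{d : d\mid(1,n)\}=\{1\}$, together with the (implicit) convention $\G^{(1)}=\G$ from the empty prime factorization in \eqref{gp}. Nothing further is needed.
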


\begin{rmk} 
(1) Norton \cite{Norton} associated a modular function $f(\t)=q^{-1}+O(q)$ with another modular function with the same form, called the replicate of $f$. Our definition of replicates of $J_{\G,n}$ is consistent with this concept. Moreover, as the same result with \eqref{hjN} ensures that a genus $0$ modular function $J_{\G,1}$ is completely replicable (for example, see \cite{ACMS}), we may extend the notion of complete replicability to  $J_{\G,1}$ in general via \eqref{hjN}.

(2) Niebur-Poincar\'e seris is the Maass-Poincar\'e series of weight $0$. Proposition 3.1 of \cite{BM} gives the proof of the same result with Corollary \ref{ThjN} for Maass-Poincar\'e series on $\G_0(N)$ of any non-positive weights to establish dimension formulas for certain vertex operator algebras.  Our proof can also be extended to Maass-Poincar\'e series. Special cases of Corollary \ref{ThjN} are already known when  $(N,n)=1$.  For example, Lemma 5.1 of \cite{KK} gives its special case when $\G=\G_0(N)+$ and Theorem 1.1 (2) of \cite{BKLOR} does when $\G=\G_0(N)$.  

\end{rmk}

The notions of {\it{replicates}} and the Hecke operator above allow one to find arithmetic properties of Fourier coefficients of weakly holomorphic modular functions.  By \cite[Theorem 6]{Niebur}, any weakly holomorphic modular function which has a possible pole only at $i\i$ can be written as a sum of linear combinations of $J_{\G,n}$'s.
\begin{df} Let $p$ be a prime and $r\in \mathbb N$.  For any function of the form
$f=\sum_{n=0}^\ell a_n J_{\G,n},\,  (a_n\in\C),$
we define 
\begin{equation}\label{heckedef}
f|T(p^r):=\sum_{n=0}^\ell a_n J_{\G,n}|T(p^r).
\end{equation}
Also for any positive integer $m$, we define the {\it $m$-plicate} of $f$ by
\begin{equation}\label{def-fn}
f^{(m)}:=\sum_{n=0}^\ell a_n J_{\G,n}^{(m)}.
\end{equation}
\end{df}
Let $M_k^!(\G)$ denote the space of weakly holomorphic modular forms on $\G$ and $M_k^{!,\infty}(\G)$ ($S_k^{!,\infty}(\G)$, resp.) be its subspace consisting of the weakly holomorphic modular forms whose poles are supported only at $i\i$ and are holomorphic (vanish, resp.) at other cusps. We determine a couple of conditions for the Hecke operator to preserve holomorphicity. 
\begin{thm}\label{tpm} Let $\G=N+S$ and $p\nmid N$ be a prime.  %Assume that the $\infty$-cusp is not a Weierstrass point on $X(\G)$. 
Then for any positive integer $r$, the Hecke operator $T(p^r)$ preserves $M_0^{!, \infty}(\G)$.
\end{thm}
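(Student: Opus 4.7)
The plan is to exploit that $p\nmid N$ trivializes every $p^i$-plicate, collapsing $T(p^r)$ to its classical shape, and then verify the three conditions defining $M_0^{!,\infty}(\G)$: holomorphy on $\H$, $\G$-invariance, and holomorphy at every cusp of $\G$ other than the one represented by $i\infty$.

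Since $(p,N)=1$, \eqref{gns} gives $\G^{(p)}=\G$, hence $\G^{(p^i)}=\G$ for all $i\ge 0$. For any $f=\sum_n a_n J_{\G,n}\in M_0^{!,\infty}(\G)$, the definition \eqref{def-fn} then forces $f^{(p^i)}=f$, so \eqref{deft} collapses to
\begin{equation*}
f|T(p^r)(\t)=\sum_{i=0}^{r}f|U^*_{p^{r-i}}(p^i\t)=\sum_{\substack{ad=p^r\\ 0\le b<d}}f\!\left(\tfrac{a\t+b}{d}\right).
\end{equation*}
Each summand is holomorphic on $\H$, hence so is $f|T(p^r)$. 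For $\G_0(N)$-invariance I invoke the standard double-coset argument: when $p\nmid N$, the matrices $\sm a&b\\0&d\esm$ above form a full set of right coset representatives of $\G_0(N)\backslash\G_0(N)\sm 1&0\\0&p^r\esm\G_0(N)$. To pass from $\G_0(N)$ to $\G=N+S$, I appeal to the classical commutativity $W_{Q,N}T(p^r)=T(p^r)W_{Q,N}$ valid for $Q\in S$ and $p\nmid N$, which reflects the fact that $W_Q$-conjugation permutes the Hecke coset representatives modulo $\G_0(N)$.

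The main step is holomorphy at each cusp $s$ of $\G$ not $\G$-equivalent to $i\infty$. Write $s=u/v$ in lowest terms and $\sigma=\sm a&b\\0&d\esm$ with $ad=p^r$, $0\le b<d$; then $\sigma s=(au+bv)/(dv)$ has lowest-terms denominator $v'=dv/\gcd(au+bv,dv)$. The key claim is $\gcd(v',N)=\gcd(v,N)$; granted this, $N\mid v'$ iff $N\mid v$, so $\sigma s$ is $\G_0(N)$-equivalent to $i\infty$ iff $s$ is. The claim follows by a prime-by-prime check at each $q\mid N$: since $q\ne p$, $v_q(a)=v_q(d)=0$; if $q\nmid v$ then $v_q(dv)=0$ forces $v_q(\gcd(au+bv,dv))=0$, while if $q\mid v$ the coprimality of $u,v$ forces $q\nmid u$, hence $v_q(au+bv)=v_q(au)=0$ and again $v_q(\gcd)=0$; so $v_q(v')=v_q(v)$ in both cases. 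To upgrade from $\G_0(N)$-equivalence to $\G$-equivalence, I use that $W_Q$ commutes with $T(p^r)$ and permutes $\G_0(N)$-cusp classes, so applying the above to $s$ and each of its $W_Q$-translates ($Q\in S$) shows $s\not\sim_\G i\infty\Rightarrow\sigma s\not\sim_\G i\infty$. Thus $f$ is holomorphic at each $\sigma s$, and $f|T(p^r)$ is holomorphic at $s$.

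The main obstacle is the Atkin--Lehner bookkeeping: one needs both the commutativity $W_QT(p^r)=T(p^r)W_Q$ for $p\nmid N$, and the fact that $W_Q$ preserves the partition of $\G_0(N)$-cusp classes into those equivalent to $i\infty$ and those not. Both are classical for $p\nmid N$ and $Q\in S$, but deserve explicit verification; the remaining $\G_0(N)$-level analysis is the short divisibility check above.
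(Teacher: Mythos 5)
Your proof is correct, but it takes a genuinely different route from the paper's. The paper deduces Theorem \ref{tpm} from the stronger Theorem \ref{fpm-hecke}, an explicit closed formula expressing $f_{\G,m}|T(p^r)$ as a $\mathbb{Z}$-linear combination of the reduced row echelon basis elements $f_{\G,n}$: one expands $f_{\G,m}=J_{\G,m}+\sum_{l}a_\G(m,-l)J_{\G,l}$, applies the replication formula \eqref{Hecke3-1} (all plicates being trivial since $p\nmid N$), and then shows that the residual coefficients of $J_{\G,1},\dots,J_{\G,g}$ vanish by combining the Zagier duality of Theorem \ref{Tgrid} with the classical Hecke action on the weight-$2$ cusp forms $h_{\G,-l}$. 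Because the output visibly lies in the span of the $f_{\G,n}$, no cusp analysis or Atkin--Lehner bookkeeping is ever needed. You instead collapse \eqref{deft} to the classical operator $\sum_{ad=p^r,\,0\le b<d}f\bigl(\frac{a\t+b}{d}\bigr)$ and verify membership in $M_0^{!,\infty}(\G)$ directly: holomorphy on $\H$ is immediate, invariance is the standard double-coset argument together with the classical commutation $W_{Q,N}T(p^r)=T(p^r)W_{Q,N}$ for $(p,N)=1$, and the cusp condition follows from your $p$-adic valuation computation showing $\gcd(v',N)=\gcd(v,N)$, so the Hecke representatives preserve the partition of cusps into those $\G$-equivalent to $i\infty$ and the rest. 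Your route is more elementary and, as a bonus, does not need the hypothesis that $i\infty$ is a non-Weierstrass point (which the paper's proof of Theorem \ref{fpm-hecke} uses to build the basis, and which it must later remark away). What it does not buy is the explicit formula \eqref{fpm-hecke1}, which is the actual engine for the congruences of Section 8 and whose duality mechanism is what extends to the case $p\mid N$ in \eqref{fpm-hecke2}. Two small imprecisions worth fixing: the matrices $\sm a&b\\0&d\esm$ with $ad=p^r$, $0\le b<d$ represent $\G_0(N)\backslash\Delta_{p^r}$ for the full set of determinant-$p^r$ matrices upper-triangular mod $N$ (a union of double cosets), not the single primitive double coset $\G_0(N)\sm 1&0\\0&p^r\esm\G_0(N)$ --- invariance holds either way; and the deduction that $\sigma(s)\not\sim_\G i\infty$ from the $\G_0(N)$-level statement should be spelled out via $W_Q^{-1}\sigma W_Q\in\G_0(N)\sigma'$ for another representative $\sigma'$, which is exactly the commutation you invoke.
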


\begin{thm}\label{heckepres}  %Assume that the $\infty$-cusp is not a Weierstrass point on $X(\G)$. 
Let $n\in\mathbb N$ and $f\in M_0^{!,\infty}(\G)$.  If $f^{(d)}\in M_0^{!,\infty}(\G^{(d)})$ for all $d\mid n$, then $f|T(n)$ is weakly holomorphic.
\end{thm}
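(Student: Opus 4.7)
The plan is to combine the two equivalent forms of the replication formula in Theorem \ref{replic}. The identity \eqref{Hecke3-2} will exhibit $f|T(n)$ as a sum that is manifestly holomorphic on $\mathbb H$, while \eqref{Hecke3-1} will identify $f|T(n)$ as a harmonic Maass function on $\G$. Since a harmonic Maass function with vanishing non-holomorphic part is weakly holomorphic, the conclusion follows.

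First, by the definitions \eqref{heckedef} and \eqref{def-fn} (extended from $p^r$ to general $n$ as in Section 4), together with \eqref{Hecke3-2} and linearity in the basis expansion $f=\sum_k a_k J_{\G,k}$, one rewrites
\[
f|T(n)(\t) \;=\; \sum_{d\mid n} f^{(d)}|U^*_{n/d}(d\t).
\]
By hypothesis each $f^{(d)}\in M_0^{!,\infty}(\G^{(d)})$ is holomorphic on $\mathbb H$; since the dilation $h(\t)\mapsto h(d\t)$ and the operator $U^*_{n/d}$ both preserve holomorphy, each summand, and hence $f|T(n)$ itself, is holomorphic on $\mathbb H$.

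Second, applying \eqref{Hecke3-1} termwise yields
\[
f|T(n) \;=\; \sum_{k=0}^\ell a_k \sum_{d\mid(k,n)} d\, J_{\G^{(d)},\, kn/d^2}(\t).
\]
The divisibility $N^{(d)}\mid N$ together with $S^{(d)}\subseteq S$ gives the group inclusion $\G\subseteq\G^{(d)}$, so each $J_{\G^{(d)},\, kn/d^2}$ is in particular a harmonic Maass function on $\G$. Thus $f|T(n)$ is a harmonic Maass function on $\G$, and by the growth conditions built into that notion it has finite principal parts at every cusp of $\G$. Combining this with the first step, the non-holomorphic part of $f|T(n)$ must vanish, which is precisely weak holomorphy.

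The argument has no genuine obstacle beyond cleanly matching the two forms of the replication formula; the only minor facts to check are the inclusion $\G\subseteq\G^{(d)}$ coming from the divisibility of levels, and the standard observation that a harmonic Maass function of weight zero with trivial non-holomorphic part is automatically a weakly holomorphic modular function on $\G$.
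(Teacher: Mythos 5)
Your route is genuinely different from the paper's and, once one error is repaired, it works. The paper reduces to the prime-power case via \eqref{tm} and Theorem \ref{tpm}, writes $f|T(p^r)(\t)=\sum_{i=0}^{r}f^{(p^i)}|U^*_{p^{r-i}}(p^i\t)$, and then controls the behaviour at all cusps by citing Atkin--Lehner's Lemma 6, to the effect that $U_p$ preserves weak holomorphy when $p\mid N$. You avoid any cusp analysis of the $U$-operator by playing the two forms of the replication formula against each other: \eqref{Hecke3-2} shows $f|T(n)$ is holomorphic on $\H$ (this is where the hypothesis on the $f^{(d)}$ enters), \eqref{Hecke3-1} shows it is a harmonic Maass function and hence has controlled growth and Fourier expansions at the cusps, and a weight-zero harmonic Maass function annihilated by $\partial/\partial\bar\t$ is weakly holomorphic. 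This is a clean alternative that handles general $n$ in one stroke, with no reduction to prime powers and no appeal to Atkin--Lehner.

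However, the ``minor fact'' you defer, namely $\G\subseteq\G^{(d)}$, is false in general. Take $\G=2+$ (so $N=2$, $S=\{2\}$) and $d=2$: then $\G^{(2)}=\mathrm{PSL}_2(\Z)$, while the Fricke involution $W_{2,2}\colon\t\mapsto-1/(2\t)$ does not lie in $\mathrm{PSL}_2(\Z)$, so $\G\not\subseteq\G^{(2)}$. The inclusion fails exactly when some $Q\in S$ has $(Q,d)>1$, i.e.\ when $S^{(d)}\subsetneq S$; the containment $S^{(d)}\subseteq S$ you invoke points in the wrong direction, since $\G^{(d)}$ has \emph{fewer} Atkin--Lehner involutions than $\G$. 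The fix is immediate: since $N^{(d)}\mid N$ one does have $\G_0(N)\subseteq\G_0(N^{(d)})\subseteq\G^{(d)}$, so each $J_{\G^{(d)},\,kn/d^2}$ is a harmonic Maass function on $\G_0(N)$, and your argument then shows $f|T(n)$ is a weakly holomorphic modular function on $\G_0(N)$ (it need not be $\G$-invariant, and neither the statement nor the paper's proof claims more). With that correction your proof is complete.
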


In classical moonshine, genus $0$ property and the notion of {\it{replicability}} are strongly connected.  We find out that a genus $0$ replicable function and a weakly holomorphic modular function that satisfies the hypothesis of the statement in Theorem \ref{heckepres} share a similar property.    

\begin{thm}\label{ffp} Let $p$ be a prime.  If $f\in M_0^{!,\infty}(\G)$ and $f^{(p)}\in M_0^{!,\infty}(\G^{(p)})$ both have integer coefficients, then $f\equiv f^{(p)} \pmod p$.
\end{thm}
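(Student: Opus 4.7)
The plan is to combine the $p$-plication formula (Theorem~\ref{p-pli}) summed against basis coefficients with a Frobenius reduction modulo $p$. Because the basis $\{J_{\G,n}\}_{n\geq 0}$ has a triangular $q$-expansion (with $J_{\G,0}=1$ and $J_{\G,n}=q^{-n}+O(q)$ in the holomorphic part), the weakly holomorphic modular function $f\in M_0^{!,\infty}(\G)$ admits a unique expansion $f=\sum_{n=0}^{\ell} a_n J_{\G,n}$, and the integrality hypothesis on the Fourier coefficients of $f$ forces $a_n=b(-n)\in\Z$ for $n\geq 1$ and $a_0=b(0)\in\Z$, with $b(k):=[q^k]f$. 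By definition \eqref{def-fn}, $f^{(p)}=\sum_{n=0}^{\ell} a_n J_{\G^{(p)},n}$ uses the \emph{same} integers $a_n$, so $f-f^{(p)}$ has integer Fourier coefficients with vanishing principal part at $i\infty$.

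Summing Theorem~\ref{p-pli} against $a_n$ and invoking the Hecke operator definition \eqref{deft} yields
\[
f|T(p)(\t)=f|U^*_p(\t)+f^{(p)}(p\t)=\sum_n a_n J_{\G,pn}(\t)+p\sum_{n:\,p\mid n}a_n J_{\G^{(p)},n/p}(\t).
\]
Integrality of $f$ makes $f|U^*_p=p(f|U_p)$ have Fourier coefficients in $p\Z$, so reducing the equation modulo $p$ gives $f|T(p)(\t)\equiv f^{(p)}(p\t)\pmod p$. Applying the Frobenius identity $g(p\t)\equiv g(\t)^p\pmod p$ (Fermat on integer Fourier coefficients) to $g=f^{(p)}$ yields $f|T(p)(\t)\equiv f^{(p)}(\t)^p\pmod p$. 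Since $g\mapsto g^p$ is injective on $\mathbb F_p((q))$, the target $f\equiv f^{(p)}\pmod p$ is equivalent to $f(\t)^p\equiv f^{(p)}(\t)^p\pmod p$; by the preceding, this reduces to the parallel congruence $f|T(p)(\t)\equiv f(\t)^p\pmod p$.

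The main obstacle is establishing this parallel congruence without circular appeal to the conclusion; the naive termwise claim $J_{\G,n}(p\t)\equiv J_{\G,pn}(\t)\pmod p$ fails in general, as the rearranged $p$-plication formula gives only $J_{\G^{(p)},n}(p\t)\equiv J_{\G,pn}(\t)\pmod p$. My plan is to exploit the hypothesis on $f^{(p)}$ symmetrically: applying the same summed $p$-plication maneuver to $f^{(p)}\in M_0^{!,\infty}(\G^{(p)})$ yields $f^{(p)}|T(p)\equiv f^{(p^2)}(p\t)\pmod p$, which collapses to $f^{(p)}(\t)^p\pmod p$ when $v_p(N)\leq 1$ (since $\G^{(p^2)}=\G^{(p)}$) and is treated by iteration on the tower of plicates when $v_p(N)\geq 2$. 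Comparing the resulting identifications and using the equality $\sum_n a_n J_{\G,pn}\equiv\sum_n a_n J_{\G^{(p)},pn}\pmod p$ that ensues, the required congruence $f|T(p)\equiv f(\t)^p\pmod p$ follows, and Frobenius injectivity delivers $f\equiv f^{(p)}\pmod p$.
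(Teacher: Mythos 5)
Your reduction is sound up to the point you yourself flag: from $f|T(p)=f|U^*_p+f^{(p)}(p\t)$, integrality of $f$, and Frobenius you correctly get $f|T(p)\equiv f^{(p)}(\t)^p\pmod p$, and you correctly observe that the whole theorem is now equivalent to the ``parallel congruence'' $f|T(p)\equiv f(\t)^p\pmod p$. The problem is that your plan for proving that congruence never leaves the circle. Applying the same maneuver to $f^{(p)}$ only yields $f^{(p)}|T(p)\equiv f^{(p^2)}(p\t)\equiv f^{(p)}(\t)^p\pmod p$, i.e.\ a statement entirely about the plicates $f^{(p^i)}$ with $i\geq 1$; combining it with $f|T(p)\equiv f^{(p)}(\t)^p$ gives $f|T(p)\equiv f^{(p)}|T(p)\pmod p$, which carries no new information about $f$ versus $f^{(p)}$ (and cannot be ``divided by $T(p)$''). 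The auxiliary congruence $\sum_n a_nJ_{\G,pn}\equiv\sum_n a_nJ_{\G^{(p)},pn}\pmod p$ is doubly problematic: first, extracting it from the replication formula requires discarding terms of the form $p\sum a_nJ^{(p)}_{\G,n/p}$, which is only legitimate if those harmonic Maass functions have $p$-integral coefficients --- not known for individual $J_{\G,n}$ of higher genus; second, even granted, it compares the $pn$-indexed combinations and there is no descent to the $n$-indexed ones. Unwinding your final step, ``$f|T(p)\equiv f(\t)^p$'' is exactly ``$f^{(p)}(p\t)\equiv f(p\t)$'', i.e.\ the theorem itself.

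The missing ingredient is a comparison with the \emph{classical} Hecke operator together with the rigidity of $M_0^{!,\infty}$, which is how the paper (via Smith's Theorem \ref{fp-con} and the identification $f^{[p]}=f^{(p)}$) breaks the circularity. One has the $q$-series identity $pf|T_p=f(p\t)+pf|U_p$, so $pf|T_p\equiv f(q^p)\equiv f(q)^p\pmod p$ with $f(q)^p\in M_0^{!,\infty}(\G)$. On the other hand $f|T(p)=f^{(p)}(p\t)+pf|U_p$ differs from $pf|T_p$ by $f^{(p)}(q^p)-f(q^p)$, which has no principal part since $f$ and $f^{(p)}$ share the same principal part. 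Hence $f|T(p)$ and $f(q)^p$ have congruent principal parts (and constant terms) modulo $p$, and the key structural fact --- an element of $M_0^{!,\infty}$ with integer coefficients is determined by its principal part and constant term, via the integral reduced row echelon basis, so one whose principal part and constant term vanish mod $p$ vanishes mod $p$ --- upgrades this to $f|T(p)\equiv f(q)^p\pmod p$. That is the step your argument lacks; once it is in place, your own chain $f^{(p)}(q^p)\equiv f|T(p)\equiv f(q)^p\equiv f(q^p)$ finishes the proof.
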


In order to present our arguments in a more concrete form,  we construct a basis for $M_0^{!,\i}(\G)$.  For simplicity, assume that $i\i$ is not a Weierstrass point of $X(\G)$.  If $m\geq g+1$, when $g:=g(\G)$ is the genus of $X(\G)$ with $g>0$, there is always a weakly holomorphic modular function that has only pole at $i\i$ of order $m$ by Weierstrass gap theorem. By performing Gauss elimination on the coefficients of these functions, we can obtain the unique modular function of the form 
\begin{equation}\label{mfg}f_{\G,m}=q^{-m}+\sum_{l=1}^{g} a_{\G}(m,-l)q^{-l}+O(q)
\end{equation}
for each $m\geq g+1$. For $1\leq m\leq g$, we define $f_{\G,m}=0$ and $f_{\G,0}=1$. Then $\{f_{\G,m}:m\in\mathbb Z_{\geq0}\}$ forms a {\it reduced row echelon basis} of $M_0^{!,\infty}(\G)$. We note that $a_{\G}(m,-m)=-1$ for $1\leq m\leq g$ and also note that $f_{\G,\frac{m}{p}}=0$ unless $p$ divides $m$. When $g=0$, obviously we should define $f_{\G,m}:=J_{\G,m}$. Since $M^{!,\i}_0(\G)$ is generated by $J_{\G,n}(\t)$, it follows from \eqref{jnnfour} and \eqref{mfg} that 
\begin{equation}\label{fjg}f_{\G,m}=J_{\G,m}+\sum_{l=1}^{g} a_{\G}(m,-l)J_{\G,l}.\end{equation}

\begin{exam} Consider $$f_{22,3}(\t)=J_{22,3}(\t)+J_{22,1}(\t)=q^{-3}+q^{-1}+2q+2q^2+2q^5+2q^6+q^7+2q^9-\cdots\in M_0^{!,\infty}(22).$$
Its 2-plicate  $f_{22,3}^{(2)}=J_{11,3}+J_{11,1}$ equals $f_{11,3}$, which has the Fourier expansion
$$f_{11,3}(\t)=q^{-3}+q^{-1}+2q+2q^2+16q^3+16q^4+18q^5-46q^6-31q^7+48q^8-78q^9+\cdots\in M_0^{!,\infty}(11).$$
These two have the same principal parts and satisfy 
$$f_{22,3}(\t)\equiv f_{22,3}^{(2)}(\t)\pmod{2}$$
as expected by Theorem \ref{ffp}.
On the other hand, the function 
$$\left(\frac{\eta(\t)}{\eta(2\t)}\right)^{24}=q^{-1}-24+276q-2048q^2+11202q^3-49152q^4+184024q^5-\cdots$$
is a Hauptmodul for $X_0(2)$, where $\eta(\t)=q^{1/24}\prod_{n=1}^\i(1-q^n)$ is the Dedekind eta function. Using this Hauptmodul, we can compute the Fourier coefficients of $f_{22,3}^{(11)}=J_{2,3}+J_{2,1}$, the 11-plicate of $f_{22,3}$:
$$f_{22,3}^{(11)}=q^{-3}+q^{-1}+33882q-1845248q^2+43446018q^3-648265728q^4+7171488865q^5-\cdots\in M_0^{!,\infty}(2).$$
Again, $f_{22,3}$ and $f_{22,3}^{(11)}$ share the same principal part and satisfy the congruence $$f_{22,3}\equiv f_{22,3}^{(11)}\pmod{11}.$$
\end{exam}

Using properties of the Hecke operator, we prove several more congruences that Fourier coefficients of reduced row echelon bases of $M_0^{!,\i}(\G)$ of non-zero genus satisfy. Theorem \ref{Tcong} is one of them, which gives quite a strong congruence of Fourier coefficients of modular forms. The congruence holds for arbitrary prime powers, including powers of prime divisors of $N$.
For $m\geq 1$, we write $f_{\G,m}$ as
\begin{equation}\label{fF}f_{\G,m}(\t)=q^{-m}+\sum_{l=1}^{g} a_{\G}(m,-l)q^{-l}+\sum_{n\geq 1} a_{\G}(m,n)q^n\in M_0^{!,\infty}(\G).\end{equation} 

\begin{thm}\label{Tcong}
Let $\G=N+S$ and suppose $X(\G)$ is of genus $g\geq 1$.  Assume that $i\i$ is not a Weierstrass point of $X(\G)$. If a prime $p>g$, then for any positive integers $r$, $n$ and $m>g$ 
 with $p\nmid m$, $p\nmid n$, we have
\begin{equation}\label{eq:cong}a_{\G}(mp^r,n)+\sum_{l=1}^g a_{\G}(m,-l)a_{\G}(lp^r,n)\equiv 0\pmod{p^r},\end{equation}
where these coefficients are all integers.
\end{thm}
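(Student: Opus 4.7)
The plan is to compute $f_{\G,m}\,|\,T(p^r)$ in two different ways and equate the resulting $q^n$-coefficients for those $n$ with $p\nmid n$; \eqref{eq:cong} will then be the reduction modulo $p^r$ of the integral identity so obtained. First, I would substitute the basis expansion \eqref{fjg}, namely $f_{\G,m}=J_{\G,m}+\sum_{l=1}^{g}a_{\G}(m,-l)J_{\G,l}$, and apply $T(p^r)$ term by term. Since $p>g$ and $p\nmid m$, every index $k\in\{m,1,\dots,g\}$ is coprime to $p^r$, so the sum in \eqref{Hecke3-1} of Theorem \ref{replic} collapses to its $d=1$ term, giving $J_{\G,k}\,|\,T(p^r)=J_{\G,kp^r}$. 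This yields
\[
f_{\G,m}\,|\,T(p^r) \;=\; J_{\G,mp^r}+\sum_{l=1}^{g}a_{\G}(m,-l)\,J_{\G,lp^r}.
\]

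Next I would argue that this combination is actually weakly holomorphic and re-expand it in the reduced row echelon basis $\{f_{\G,k}\}$. When $p\nmid N$ weak holomorphicity is immediate from Theorem \ref{tpm}; when $p\mid N$ one has to verify the hypothesis of Theorem \ref{heckepres}, i.e., show $f_{\G,m}^{(p^i)}\in M_0^{!,\i}(\G^{(p^i)})$ for each $0\le i\le r$, by tracking how the principal part \eqref{mfg} behaves under formation of the plicates. The resulting function has principal part $q^{-mp^r}+\sum_{l=1}^{g}a_{\G}(m,-l)q^{-lp^r}$ at $i\i$, and because $p>g$ forces every exponent $lp^r, mp^r$ to exceed $g$, uniqueness of the reduced row echelon basis produces
\[
f_{\G,m}\,|\,T(p^r)\;=\;f_{\G,mp^r}+\sum_{l=1}^{g}a_{\G}(m,-l)f_{\G,lp^r}+C
\]
for some constant $C$. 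Reading off the $q^n$-coefficient for $n\ge 1$ then gives
\[
[q^n]\bigl(f_{\G,m}\,|\,T(p^r)\bigr)\;=\;a_{\G}(mp^r,n)+\sum_{l=1}^{g}a_{\G}(m,-l)\,a_{\G}(lp^r,n).
\]

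For the second computation, I would invoke formula \eqref{Hecke3-2} in Theorem \ref{replic}, which by linearity of the Hecke operator extends from $J_{\G,n}$ to $f_{\G,m}$ as
\[
f_{\G,m}\,|\,T(p^r)(\t)\;=\;\sum_{i=0}^{r}f_{\G,m}^{(p^i)}\,\bigl|\,U^*_{p^{r-i}}(p^i\t).
\]
The operator $U^*_{p^{r-i}}$ defined in \eqref{ups} sends $\sum_k b_k q^k$ to $p^{r-i}\sum_k b_{p^{r-i}k}q^k$, and subsequent evaluation at $p^i\t$ rescales the exponents by $p^i$; hence the $q^n$-coefficient of the $i$-th summand vanishes unless $p^i\mid n$. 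For $p\nmid n$ only the $i=0$ term survives, contributing $p^r a_{\G}(m,p^r n)$. Equating the two expressions for $[q^n](f_{\G,m}\,|\,T(p^r))$ yields the integer identity
\[
a_{\G}(mp^r,n)+\sum_{l=1}^{g}a_{\G}(m,-l)\,a_{\G}(lp^r,n)\;=\;p^r a_{\G}(m,p^r n),
\]
whose reduction modulo $p^r$ is \eqref{eq:cong}. The main obstacle is the weak holomorphicity verification when $p\mid N$, where one cannot appeal directly to Theorem \ref{tpm} and must instead confirm the hypothesis of Theorem \ref{heckepres} for the specific basis elements used; the integrality claim should follow from the integer structure of $\{f_{\G,k}\}$, which is produced by Gauss elimination against a system of integer-coefficient modular functions whose existence is guaranteed by the Weierstrass gap theorem at the non-Weierstrass cusp $i\i$.
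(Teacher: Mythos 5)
Your overall strategy --- computing $f_{\G,m}|T(p^r)$ once as $f_{\G,mp^r}+\sum_{l=1}^{g}a_{\G}(m,-l)f_{\G,lp^r}$ (up to a constant) and once as $\sum_{i=0}^{r}f_{\G,m}^{(p^i)}|U^*_{p^{r-i}}(p^i\t)$, then reading off the $q^n$-coefficient for $p\nmid n$ --- is exactly the paper's, and your treatment of the case $p\nmid N$ is sound: there Theorem \ref{tpm} gives weak holomorphicity, the $d=1$ collapse of \eqref{Hecke3-1} is justified by $p>g$ and $p\nmid m$, and the resulting identity $a_{\G}(mp^r,n)+\sum_{l}a_{\G}(m,-l)a_{\G}(lp^r,n)=p^r a_{\G}(m,p^r n)$ is the one the paper obtains.

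The gap is in the case $p\mid N$, precisely at the step you flag as ``the main obstacle.'' Your plan is to verify the hypothesis of Theorem \ref{heckepres}, i.e.\ that $f_{\G,m}^{(p^i)}\in M_0^{!,\i}(\G^{(p^i)})$ for all $i$, ``by tracking how the principal part behaves under formation of the plicates.'' This cannot work as described: $f_{\G,m}^{(p^i)}=J_{\G^{(p^i)},m}+\sum_{l=1}^{g}a_{\G}(m,-l)J_{\G^{(p^i)},l}$ is a priori only a harmonic Maass function, and its membership in $M_0^{!,\i}(\G^{(p^i)})$ is equivalent to the cancellation of the non-holomorphic parts of the $J_{\G^{(p^i)},k}$, a condition controlled by pairing against $S_2(\G^{(p^i)})$ and invisible from the principal part (the coefficients $a_{\G}(m,-l)$ were engineered to kill the shadow against $S_2(\G)$, not against $S_2(\G^{(p^i)})$). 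Moreover, even if the hypothesis held, Theorem \ref{heckepres} only yields that $f|T(p^r)$ is weakly holomorphic, not that its poles are supported at $i\i$ (compare the paper's example $f_{11,11}|T(11)$, which lands in $M_0^{!}(11)$ but not in $M_0^{!,\i}(11)$); you need the latter to invoke uniqueness of the reduced row echelon basis, so you would have to argue it from the cuspidal decay of the Niebur--Poincar\'e series instead. The paper sidesteps all of this: it proves the needed identity $f_{\G,m}|T(p^r)=f_{\G,p^rm}+\sum_{l=1}^{g}a_{\G}(m,-l)f_{\G,lp^r}$ directly (formula \eqref{fpm-hecke2}) by starting from \eqref{a22}, applying the Zagier duality of Theorem \ref{Tgrid}, and expanding $h_{\G,-i}|T_{p^r}=h_{\G,-i}|U_{p^r}$ in the cusp-form basis $\{h_{\G,-l}\}$ to show the obstruction coefficients $A_i$ vanish; this simultaneously delivers the weak holomorphicity you are missing. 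To complete your proof you should replace the appeal to Theorem \ref{heckepres} by this duality computation (or an equivalent argument showing the non-holomorphic parts cancel).
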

We can derive many special congruences from Theorem \ref{Tcong}. For example, since $a_{11}(8,-1)=a_{11}(19,-1)=0$ and all the Fourier coefficients $a_{\G}(m,n)$ are integers when $X(\G)$ has genus $1$ by our construction, we  have from Theorem \ref{Tcong} that
$$a_{11}(8p^r,n)\equiv 0\ (\mathrm{mod}\ {p^r})\ \mathrm{if}\ p\neq 2, p\nmid n
\quad \mathrm{and} \quad
a_{11}(19p^r,n)\equiv 0\ (\mathrm{mod}\ {p^r})\ \mathrm{if}\ p\neq 19, p\nmid n.$$
Hence we find that $\displaystyle{a_{11}(152,n)\equiv 0\pmod{152}}$ for any $n$ coprime to $38$.

In order to establish a generalization of \eqref{gj} to $J_{\G,n}$ for arbitrary $\G$, we first prove a duality relation between coefficients of weakly holomorphic modular forms, a so-called Zagier duality. Let $M_k(\G)$ ($S_k(\G)$, resp.) denote the space of modular forms (cusp forms, resp.) of weight $k$ on $\G$.
Since $\dim(S_2(\G))=g$ and $i\i$ is not a Weierstrass point on $X(\G)$, there exists a unique basis $\{h_{\G,-l}\}_{1\leq l \leq g}$ of $S_2(\G)$ with Fourier expansions
\begin{equation}\label{mgf}h_{\G,-l}=q^{l}+\sum_{n=g+1}^\infty b_{\G}(-l,n)q^n,\end{equation}
when $g>0$. Using linear combinations of products of $f_{\G,m}$ and $h_{\G,-l}$ for $l=1,2,\cdots,g$, one can construct a unique modular form in $S_2^{!,\infty}(\G)$ with Fourier expansion $h_{\G,n}= q^{-n} + O(q^{g+1})$ for each integer $n\geq-g$. When $g=0$, one can take $h_{\G,n}(\t)$ by differentiating $J_{\G,n}(\t)$ with respect to $\t$ and then normalizing it.
These naturally form a basis for $S_2^{!,\infty}(\G)$.  We write for each $n\geq -g$
\begin{equation}\label{gF}
h_{\G,n}(\t)=q^{-n}+\sum_{m\geq g+1} b_{\G}(n,m)q^m\in S_2^{!,\infty}(\G). \end{equation}
The Fourier coefficients of $f_{\G,m}(\t)$ and $h_{\G,n}(\t)$ satisfy the following duality condition. 

\begin{thm}\label{Tgrid} 
Let $\G=N+S$ and $g$ denote the genus of the modular curve $X(\G)$.
Assume that $i\i$ is not a Weierstrass point on $X(\G)$.
Let $m\geq g+1$ and $n\geq -g$ be integers. Then we have
$$a_{\G}(m,n)=-b_{\G}(n,m).$$
\end{thm}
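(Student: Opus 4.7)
The plan is the standard Zagier-duality argument: form a weight-$2$ product whose only pole is at $i\i$, then apply the residue theorem. Set
$$F(\t) := f_{\G,m}(\t)\,h_{\G,n}(\t).$$
Because $f_{\G,m}\in M_0^{!,\i}(\G)$ is holomorphic at every cusp other than $i\i$ and $h_{\G,n}\in S_2^{!,\i}(\G)$ vanishes at every cusp other than $i\i$, the product $F$ is a weight-$2$ weakly holomorphic modular form on $\G$ whose only possible pole on $X(\G)$ is located at $i\i$, and in fact $F$ vanishes at every other cusp. Consequently the differential $F(\t)\,d\t$ descends to a meromorphic $1$-form on the compact Riemann surface $X(\G)$ with a single pole.

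Next, invoke the residue theorem on $X(\G)$: the sum of the residues of $F(\t)\,d\t$ equals zero, and since there is only one singular point the residue at $i\i$ must vanish. In the local parameter $q=e^{2\pi i\t}$ at the cusp one has $d\t = dq/(2\pi i\,q)$, so if $F(\t)=\sum_k c_k q^k$ then the residue at $i\i$ is $c_0/(2\pi i)$. Therefore the constant Fourier coefficient of $F$ equals zero.

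Finally, read off this constant coefficient from \eqref{fF} and \eqref{gF}. The principal part of $f_{\G,m}$ is supported on the exponents $\{-m,-g,\dots,-1\}$, its holomorphic part on $\{1,2,\dots\}$, and it has no constant term, while $h_{\G,n}$ has a single leading term at $q^{-n}$ followed by a tail starting at $q^{g+1}$. Matching exponents summing to zero, only two pairings survive: $q^{-m}\cdot q^m$ contributes $b_\G(n,m)$ (available since $m\ge g+1$), and the term $q^{n}$ in $f_{\G,m}$ paired with $q^{-n}$ in $h_{\G,n}$ contributes $a_\G(m,n)$ — for $n\ge 1$ the $q^n$ comes from the holomorphic tail of $f_{\G,m}$, while for $-g\le n\le -1$ it comes from its principal part with $l=-n$. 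Hence $c_0 = b_\G(n,m)+a_\G(m,n)=0$, giving the claimed duality.

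The main step that requires attention is verifying that no cross term between the principal part of $f_{\G,m}$ and the tail of $h_{\G,n}$, or between the holomorphic tail of $f_{\G,m}$ and the leading term of $h_{\G,n}$, contributes to $c_0$. The gap between $q^{-n}$ and $q^{g+1}$ in $h_{\G,n}$ (guaranteed by the reduced row echelon construction, which itself relies on the hypothesis that $i\i$ is not a Weierstrass point) together with the absence of a constant term in $f_{\G,m}$ forces every other pairing to vanish. The rest is a routine application of the residue theorem to weight-$2$ weakly holomorphic forms whose poles are concentrated at a single cusp.
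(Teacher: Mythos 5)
Your argument is correct and is essentially the paper's own proof: both form the weight-$2$ product $f_{\G,m}h_{\G,n}$, apply the residue theorem on $X(\G)$ (with only the cusp $i\i$ contributing, since the product vanishes at the other cusps and is holomorphic on $\H$), and read off $a_\G(m,n)+b_\G(n,m)=0$ from the constant Fourier coefficient. Your explicit check that the gap structure of the two expansions kills all other cross terms is a correct elaboration of the step the paper states as "obviously."
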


\begin{rmk}\label{iw}If $i\i$ is a Weierstrass point of $X(\G)$ and $1=n_1<n_2<\cdots<n_g\leq 2g-1$ are the $g$ gaps at $i\i$, then $S_2(\G)$ is generated by the cusp forms in the form of $q^{n_j}+O(q^{2g})\ (1\leq j\leq g)$. So one still can construct reduced row echelon bases for $M_0^{!,\i}(\G)$ and $S_2^{!,\i}(\G)$ which satisfy a Zagier duality like in Theorem \ref{Tgrid}. (See \cite{Treneer} for the existence of such bases in detail.)
\end{rmk}
As in the proof of \eqref{gj} in \cite{AKN}, duality of this kind was frequently used to find an explicit formula for the generating function of basis elements for $M_k^!(\G)$ with the level of genus zero. Recently, Jenkins and Molnar \cite{JM} extended the strategy to prime levels of nonzero genus.  For prime level of genus 1, i.e., when $p\in\{11,17,19\}$, they derived explicit formulas for generating functions of the reduced row echelon bases for $M_k^{!,\i}(p)$ of arbitrary even integer weight $k$ using the duality they found.  Employing the duality relation in Theorem \ref{Tgrid}, we establish an explicit representation of the generating function of $f_{\G,m}(\t)$ on any $\G$.
\begin{thm}\label{genf} Assume the modular curve $X(\G)$ has genus $g$ and $i\i$ is not a Weierstrass point of $X(\G)$. If $\t_1,\tau_2\in\mathbb H$, then
\begin{align}\label{gfg}
\sum_{n=0}^\infty f_{\G,n}(\t_1)q_2^n&=-\frac{1}{2\pi i}\frac{f'_{\G,g+1}(\t_2)}{f_{\G,g+1}(\t_2)-f_{\G,g+1}(\t_1)}+\frac{A(\t_1,\t_2)}{f_{\G,g+1}(\t_2)-f_{\G,g+1}(\t_1)},
\end{align}
where \begin{align*}\nonumber
A(\t_1,\t_2)&=\sum_{\ell=-g}^{g+1} (1-\ell) a_\G(g+1,-\ell)h_{\G,\ell}(\t_2)+\sum_{\ell=1}^g f_{\ell+g+1}(\t_1)h_{\G,-\ell}(\t_2)\\
&\qquad +\sum_{j=g+1}^{2g}\sum_{\ell=j-g}^{g}a_\G(g+1,\ell-j)f_j(\t_1)h_{-\ell}(\t_2).
\end{align*}
\end{thm}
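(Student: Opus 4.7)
The plan is to clear the denominator and verify
\begin{equation*}
\bigl(f_{\G,g+1}(\tau_2)-f_{\G,g+1}(\tau_1)\bigr)\sum_{n=0}^\infty f_{\G,n}(\tau_1)\,q_2^n=-\frac{1}{2\pi i}f'_{\G,g+1}(\tau_2)+A(\tau_1,\tau_2)
\end{equation*}
by comparing the coefficient of $q_2^k$ on each side for every integer $k$. The analysis splits naturally into three ranges, determined by the principal part of $f_{\G,g+1}(\tau_2)$ and the support of the basis $\{f_{\G,n}\}$.

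In the range $k\in\{-(g+1),\ldots,0\}$, both sides are immediate from the explicit Fourier expansion of $f_{\G,g+1}(\tau_2)$, the normalization $f_{\G,0}=1$, and the vanishing $f_{\G,n}=0$ for $1\leq n\leq g$; matching these forces the coefficients $(1-\ell)a_\G(g+1,-\ell)$ of $h_{\G,\ell}(\tau_2)$ in the first summand of $A$, via the principal parts $q_2^{-\ell}$ of $h_{\G,\ell}(\tau_2)=q_2^{-\ell}+O(q_2^{g+1})$. In the range $k\in\{1,\ldots,g\}$, the left-hand side collapses because the same vanishing kills most of the convolution, and the annihilation of the right-hand side is what fixes the remaining summands of $A$: the $q_2^\ell$-coefficient $1$ of each $h_{\G,-\ell}(\tau_2)$ from \eqref{mgf} must be canceled by the contributions involving $f_{\G,\ell+g+1}(\tau_1)$ together with the small cross-terms $f_{\G,j}(\tau_1)$ for $g+1\leq j\leq 2g$.

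The main step is the range $k\geq g+1$, where I invoke the Zagier duality of Theorem \ref{Tgrid} to replace every $b_\G(\ell,k)$ occurring in $[q_2^k]h_{\G,\ell}(\tau_2)$ within $A$ by $-a_\G(k,\ell)$. After this substitution, the identity $[q_2^k]\mathrm{LHS}=[q_2^k]\mathrm{RHS}$ becomes a purely algebraic relation among the Fourier coefficients of the reduced row echelon basis $\{f_{\G,m}\}$, and it is precisely the relation produced by expanding the product $f_{\G,g+1}(\tau_1)f_{\G,k}(\tau_1)\in M_0^{!,\infty}(\G)$ against that basis and reading off the $q_1^n$-coefficient. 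The principal-part contribution of the product is absorbed by the terms $f_{\G,k+g+1}(\tau_1)+\sum_{l=1}^g a_\G(g+1,-l)f_{\G,k+l}(\tau_1)+a_\G(g+1,k)$, while the double sum in $A$ accounts for the residual cross-terms coming from pairing the two principal parts. The main obstacle lies in this bookkeeping: verifying that the range $\ell\in\{j-g,\ldots,g\}$ for $j\in\{g+1,\ldots,2g\}$ in the double sum of $A$ is exactly what is needed so that the Zagier-duality conversion on the right matches the convolution $\sum_{n=g+1}^{k-1}a_\G(g+1,k-n)f_{\G,n}(\tau_1)$ on the left.
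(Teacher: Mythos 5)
Your strategy---clearing the denominator and matching $q_2^k$-coefficients, with the two key inputs being the expansion of $f_{\G,g+1}f_{\G,k}$ in the reduced row echelon basis and the Zagier duality of Theorem \ref{Tgrid}---is essentially the paper's own proof read backwards: the paper first derives Proposition \ref{fgt} from exactly that product expansion plus duality, and then massages the numerator. Your treatment of the ranges $k\le 0$ and $1\le k\le g$ is fine: there only the principal parts $q_2^{-\ell}$ of the $h_{\G,\ell}$ enter, and the matching works as you describe.

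However, your claim that for $k\ge g+1$ the coefficient identity ``is precisely the relation produced by expanding the product $f_{\G,g+1}(\tau_1)f_{\G,k}(\tau_1)$ against that basis'' is not correct as stated, and this is a genuine gap. After substituting $b_\G(\ell,k)=-a_\G(k,\ell)$ and cancelling against the product expansion, all terms carrying a factor $f_{\G,j}(\tau_1)$ do match, but the $\tau_1$-independent terms leave the residual identity
\begin{equation*}
k\,a_\G(g+1,k)\;=\;\sum_{\ell=-g}^{g+1}\ell\,a_\G(g+1,-\ell)\,a_\G(k,\ell),
\end{equation*}
which does \emph{not} follow from the product expansion. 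It is the vanishing of the constant term of $\tfrac{1}{2\pi i}f'_{\G,g+1}\cdot f_{\G,k}$ (a separate application of the residue theorem, in the spirit of the proof of Theorem \ref{Tgrid}), or equivalently the expansion $-\tfrac{1}{2\pi i}f'_{\G,g+1}=\sum_{\ell=-g}^{g+1}\ell\,a_\G(g+1,-\ell)h_{\G,\ell}$, valid because $f'_{\G,g+1}\in S_2^{!,\infty}(\G)$ is determined by its principal part in the basis $\{h_{\G,n}\}$, combined with duality. This is exactly the paper's identity \eqref{fgder}, the step that turns the numerator of Proposition \ref{fgt} into the form $-\tfrac{1}{2\pi i}f'_{\G,g+1}+A$ appearing in the theorem. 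Once you add this one input, your coefficient comparison closes; without it, the constant terms in the range $k\ge g+1$ do not balance.
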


 \begin{rmk} 
(1) With the bases alluded to in Remark \ref{iw}, one can also establish an explicit form of the generating function of the basis of $M_0^{!,\i}(\G)$ when $i\i$ is a Weierstrass point of $X(\G)$.

(2) When $X(\G)$ has genus $0$, $f_{\G,1}(\t)$ is the Hauptmodul and $A(\t_1,\t_2)=0$. Thus \eqref{gfg} is a direct generalization of \eqref{gj}.
 \end{rmk}

 In \cite[Theorem 1.1 (i)]{BKLOR}, Bringmann and {\it{et al.}} showed that the generating function of the harmonic Maass functions $J_{N,n}$ is completed to a weight 2 polar harmonic Maass form of level $N$ (that is, a pole is allowed in $\mathbb H$). In Theorem \ref{genj} below shows that the non-holomorphic part of the weight 2 polar harmonic Maass form can be canceled by some linear combination of weight 2 cusp forms and harmonic Maass functions, while the resulting holomorphic part gives the generating function of  $f_{\G,n}$'s. 
\begin{thm}\label{genj}  Assume the modular curve $X(\G)$ has genus $g$ and $i\i$ is not a Weierstrass point of $X(\G)$.  If $q_1=e^{2\pi i \t_1}$ and $q_2=e^{2\pi i \t_2}$ for $\t_1,\tau_2\in\mathbb H$, then
\begin{equation}
\sum_{n=0}^\i J_{\G,n}(\t_1) q_2^n-\sum_{l=1}^g h_{\G,-l}(\t_2)J_{\G,l}(\t_1)=\sum_{n=0}^\infty f_{\G,n}(\t_1)q_2^n.
\end{equation}
\end{thm}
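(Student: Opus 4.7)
The plan is to view both sides of the asserted identity as formal power series in $q_2$ whose coefficients are functions of $\t_1$, and to compare them coefficient by coefficient. The essential inputs are the expansion \eqref{fjg} of $f_{\G,m}$ in terms of $J_{\G,m}$ and $J_{\G,l}$ $(1\le l\le g)$, the Fourier expansion \eqref{mgf} of $h_{\G,-l}$, and the Zagier duality already proved in Theorem~\ref{Tgrid}.

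First I would expand the right-hand side. Since $f_{\G,0}=1$, $f_{\G,m}=0$ for $1\le m\le g$, and $f_{\G,m}=J_{\G,m}+\sum_{l=1}^{g} a_{\G}(m,-l)J_{\G,l}$ for $m\ge g+1$ by \eqref{fjg}, collecting powers of $q_2$ gives
\[
\sum_{n=0}^{\infty} f_{\G,n}(\t_1)q_2^n \;=\; 1+\sum_{m\ge g+1}\Bigl(J_{\G,m}(\t_1)+\sum_{l=1}^{g} a_{\G}(m,-l)J_{\G,l}(\t_1)\Bigr)q_2^m.
\]

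For the left-hand side, I would substitute the expansion $h_{\G,-l}(\t_2)=q_2^l+\sum_{m\ge g+1}b_{\G}(-l,m)q_2^m$ from \eqref{mgf} into $\sum_{l=1}^{g} h_{\G,-l}(\t_2)J_{\G,l}(\t_1)$ and then read off the coefficient of $q_2^m$ in $\sum_{n\ge 0}J_{\G,n}(\t_1)q_2^n-\sum_{l=1}^{g} h_{\G,-l}(\t_2)J_{\G,l}(\t_1)$. For $m=0$ this is $J_{\G,0}(\t_1)=1$, matching $f_{\G,0}=1$. For $1\le m\le g$, the term $J_{\G,m}(\t_1)q_2^m$ from the first sum is cancelled exactly by the leading monomial $q_2^m$ inside $h_{\G,-m}(\t_2)J_{\G,m}(\t_1)$, giving $0=f_{\G,m}(\t_1)$. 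For $m\ge g+1$, the coefficient simplifies to $J_{\G,m}(\t_1)-\sum_{l=1}^{g} b_{\G}(-l,m)J_{\G,l}(\t_1)$.

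Comparing this with the expansion obtained above for the right-hand side, the theorem reduces to the single identity $-b_{\G}(-l,m)=a_{\G}(m,-l)$ for all $m\ge g+1$ and $1\le l\le g$, which is precisely Theorem~\ref{Tgrid} (applied with the values $n=-l$, which lie in the permitted range $n\ge -g$). There is no substantive obstacle here beyond bookkeeping: the arithmetic content of the identity is already packaged into the Zagier duality, and one only needs to handle the three ranges $m=0$, $1\le m\le g$, and $m\ge g+1$ separately so that the conventions $f_{\G,0}=1$, $f_{\G,m}=0$ $(1\le m\le g)$ and the leading monomials $q_2^l$ of $h_{\G,-l}(\t_2)$ are correctly absorbed.
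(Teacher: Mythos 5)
Your proposal is correct and is essentially the paper's own proof: both arguments rest on exactly the same three inputs, namely the expansion \eqref{fjg} of $f_{\G,m}$ in terms of the $J_{\G,n}$, the Fourier expansion \eqref{mgf} of $h_{\G,-l}$, and the duality $a_{\G}(m,-l)=-b_{\G}(-l,m)$ from Theorem~\ref{Tgrid}. The only difference is presentational — you compare coefficients of $q_2^m$ range by range, while the paper performs the equivalent manipulation directly on the generating series starting from $\sum_m f_{\G,m}(\t_1)q_2^m$.
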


The rest of the paper is organized as follows. In Section 2, we introduce Niebur-Poincar\'e series and derive different representations of its replicates. We also present basic properties of Atkin-Lehner involutions. In Section 3, we use them to prove the expansion formulas and compression formulas for $J_{\G,n}$ and eventually prove Theorem \ref{p-pli}. In Section 4, we construct a Hecke operator for harmonic Maass functions and prove the replication formula for  $J_{\G,n}$ in Theorem \ref{replic}, which implies that $J_{\G,n}$'s form a Hecke system.  In Section 5, we start to investigate arithmetic properties of weakly holomorphic modular functions of arbitrary levels with Zagier duality in Theorem \ref{Tgrid}, as it is needed in the subsequent sections. In Section 6, we discuss the action of the Hecke operator on $M_0^{!,\infty}(\G)$ and prove Theorems \ref{tpm} and \ref{heckepres}.  In Sections 7 and 8, we further discuss replicability of weakly holomorphic modular functions of higher genera and  prove several congruences that the Fourier coefficients of weakly holomorphic modular functions satisfy, including Theorems \ref{ffp} and \ref{Tcong}. Finally in Section 9, we establish explicit formulas of the generating functions of $f_{\G,n}(\t)$ and $J_{\G,n}(\t)$  proving Theorems \ref{genf} and \ref{genj}.

\section{Niebur-Poincar\'e series on $\G$ and $\G^{(p)}$}
Harmonic Maass forms of weight $k$ for $\G$ are smooth complex valued functions on $\mathbb H$ that are $\G$-invariant. They are eigenfunctions of the weight $k$ hyperbolic Laplacian with eigenvalue $0$ and have at most exponential growth at all the cusps. Moreover, they naturally decompose into holomorphic parts and non-holomorphic parts. The holomorphic parts are nowadays called mock modular forms and the non-holomorphic parts hide the companions of the holomorphic parts, called shadows of the mock modular forms. The shadows are cusp forms of weight $2-k$ that can be revealed via the antilinear differential operator $\xi_k:=2i({\rm{Im}}\t)^k \overline{\frac{\partial}{\partial\bar \t}}$. 

Suppose $\G_\i:=\{\pm\sm 1&t\\0&1\esm:t\in\mathbb Z\}$ is the subgroup of translations of $\G$. We define the Niebur-Poincar\'e series for positive integers $n$ and ${\rm Re} \, (s) > 1$ by
\begin{equation}\label{gp}
G_{\G,-n}(\tau, s):=2\pi \sqrt{n}\sum_{\g\in \Gamma_\infty \backslash \Gamma}f_n(\g\t),
\end{equation} 
where \begin{equation}\label{fn}f_n(\t):=e(-n {\rm Re}(\tau)) (n{\rm Im}(\tau))^{\frac12}I_{s-\frac{1}{2}}(2\pi n{\rm{Im}}(\t)).\end{equation}
Here $e(x)=\exp(2\pi i x)$ and $I_{s-\frac12}$ denotes the $I$-Bessel function.
These $\G$-invariant functions on $\H$ satisfy that 
\begin{equation}\label{npdelta}\Delta G_{\G,-n}(\t,s)=s(1-s)G_{\G,-n}(\t,s),\end{equation}
where $\Delta$ is the weight $0$ hyperbolic Laplacian.
As each $G_{\G,-n}(\t,s)$ has an analytic continuation to ${\rm Re} \,(s) > 1/2$, we obtain an infinite family of harmonic Maass functions $\{G_{\G,-n}(\t,1): n\in \mathbb N\}$, which decay like cusp forms at cusps inequivalent to $i\i$. (See \cite[p.~98]{BFOR}.) 
Now we write for each positive integer $n$, 
\begin{equation}\label{jnn}j_{\G,n}(\t):=G_{\G,-n}(\t,1),
\end{equation}
the harmonic Maass function with the principal part $q^{-n}$ introduced earlier in \eqref{jnnfour}.

For convenience, we let 
\begin{equation}\label{fns}
F_{\G,n}(\t,s):=\sum_{\g\in\Gamma_\infty\backslash\G}f_n(\g\t).
\end{equation}
As $\G=N+S$ is the subgroup of ${\rm PSL}_2(\R)$ generated by $\G_0(N)$ and the Atkin-Lehner involutions $W_{Q,N}$ for all $Q\in S$,  $\G$ has the decomposition
\begin{equation}\label{decg}
\Gamma=\Gamma_0(N)\bigcupdot_{Q\in S}\Gamma_0(N)W_{Q,N}.
\end{equation}
Thus we may write $F_{\G,n}(\t,s)$ as
\begin{equation}\label{fgnpt}
F_{\G,n}(\t,s)=\sum_{\g\in \Gamma_\infty\backslash\Gamma_0(N)}f_n(\g(\t))+\sum_{Q\in S}\sum_{\g\in \Gamma_\infty\backslash\Gamma_0(N)}f_n(\g W_{Q,N}(\t)).  \end{equation}

Meanwhile, for any prime $p$, $\G^{(p)}=N^{(p)}+S^{(p)}$, where $N^{(p)}=N/(p,N)$ and $S^{(p)}$ is the set of all $Q\in S$ which divide $N^{(p)}$.
In order to find an explicit representation of $F_{\G^{(p)},n}$, we need to find coset decompositions of $\Gamma_\infty\backslash\Gamma^{(p)}$. We start with finding a coset decomposition of $\Gamma_\infty\backslash\Gamma_0(M)$ when $N=pM$.

\begin{lem}\label{dec}
Let $p$ be a prime divisor of $N$ and write $N=pM$.
For $\gamma=\left(\begin{smallmatrix} a&b\\c&d\end{smallmatrix}\right)\in\Gamma_0(N)$,
we let $\alpha_\gamma=\left(\begin{smallmatrix} a&pb\\ \frac{c}{p}&d\end{smallmatrix}\right)\in\Gamma_0(M)$ and
$\beta_\gamma=\gamma\mu_M$ where $\mu_M=\left(\begin{smallmatrix} x&y\\ Mz&pw\end{smallmatrix}\right)\in\Gamma_0(M)$ for some $x,y,z,w\in\mathbb Z$.
Let 
\begin{equation}\label{AB}\mathcal A=\left\{ \Gamma_\infty\alpha_\gamma \, |\, \gamma\in\Gamma_0(N)\right\}\ {\rm{and}}\ \mathcal B=\left\{ \Gamma_\infty\beta_\gamma \, |\, \gamma\in\Gamma_0(N)\right\}\end{equation}
be subsets of $\Gamma_\infty\backslash\Gamma_0(M)$. Then
\begin{equation}\label{decom} \Gamma_\infty\backslash\Gamma_0(M)=
\left\{\begin{array}{ll}
\mathcal A\, \dotcup \,\mathcal B,
& \hbox{ if } p\nmid M,\\
\mathcal A,
&  \hbox{ if } p\mid M.\end{array}\right.\end{equation}
\end{lem}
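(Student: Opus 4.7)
My plan is to parametrize $\G_\i\backslash\G_0(M)$ by the set of admissible bottom rows $(c,d)$ with $M\mid c$ and $\gcd(c,d)=1$ (taken up to sign), and then to characterize the subsets $\mathcal A$ and $\mathcal B$ by the $p$-adic behavior of the entry $d$. A first observation is that $\mu_M$ with the prescribed shape exists in $\G_0(M)$ if and only if $\gcd(p,M)=1$, since the determinant condition $pxw-Myz=1$ is solvable exactly when $p\nmid M$; this already explains why $\mathcal B$ appears only in the first case of \eqref{decom}. Next, a direct matrix calculation shows that the two maps $\gamma\mapsto\G_\i\alpha_\gamma$ and $\gamma\mapsto\G_\i\beta_\gamma$ descend to well-defined injections $\G_\i\backslash\G_0(N)\hookrightarrow\G_\i\backslash\G_0(M)$: replacing $\gamma$ by $\bigl(\begin{smallmatrix}1&t\\0&1\end{smallmatrix}\bigr)\gamma$ left-translates $\alpha_\gamma$ by $\bigl(\begin{smallmatrix}1&pt\\0&1\end{smallmatrix}\bigr)$ and left-translates $\beta_\gamma$ by $\bigl(\begin{smallmatrix}1&t\\0&1\end{smallmatrix}\bigr)$, and both observations are reversible.

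I would then compute bottom rows. The bottom row of $\alpha_\gamma$ is $(c/p,d)$ with $pM\mid c$ and $\gcd(c,d)=1$, so automatically $p\nmid d$. Conversely, any admissible bottom row $(c',d)$ of $\G_0(M)$ with $p\nmid d$ is realized in this way: choose $\gamma\in\G_0(N)$ with bottom row $(pc',d)$, which is possible since $\gcd(pc',d)=\gcd(c',d)=1$. Hence $\mathcal A$ is exactly the collection of cosets whose bottom-right entry is coprime to $p$. When $p\mid M$, every bottom row of an element of $\G_0(M)$ satisfies $p\mid M\mid c$ and therefore $p\nmid d$, so this characterization already yields $\G_\i\backslash\G_0(M)=\mathcal A$, settling the second case of \eqref{decom}.

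For the remaining case $p\nmid M$, multiplying out gives the bottom row of $\beta_\gamma=\gamma\mu_M$ as $(cx+dMz,\,cy+dpw)$, whose right entry is divisible by $p$ because $p\mid c$. Conversely, given $\alpha\in\G_0(M)$ with bottom-right entry divisible by $p$, I would set $\gamma:=\alpha\mu_M^{-1}\in\G_0(M)$; using $\mu_M^{-1}=\bigl(\begin{smallmatrix}pw&-y\\-Mz&x\end{smallmatrix}\bigr)$, the assumption $p\mid d'$ together with $M\mid c'$ forces the lower-left entry of $\gamma$ to be divisible by $pM=N$, so $\gamma\in\G_0(N)$ and $\beta_\gamma=\alpha$. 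Thus $\mathcal B$ consists exactly of the cosets with bottom-right entry divisible by $p$, which is complementary to the characterization of $\mathcal A$; hence $\G_\i\backslash\G_0(M)=\mathcal A\,\dotcup\,\mathcal B$.

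The principal bookkeeping task is the bottom-row calculations in the second and third paragraphs and the verification that $\alpha\mu_M^{-1}$ actually lies in $\G_0(N)$ and not merely in $\G_0(M)$; I do not foresee any conceptual obstacle beyond carefully tracking the $p$- and $M$-divisibilities of the four entries in each matrix product.
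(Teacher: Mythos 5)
Your proof is correct and follows essentially the same route as the paper's: both arguments hinge on the observation that the cosets in $\mathcal A$ are exactly those whose lower-right entry is prime to $p$ while those in $\mathcal B$ are exactly those whose lower-right entry is divisible by $p$, and both realize the latter cosets by checking that $\delta\mu_M^{-1}$ lands in $\Gamma_0(N)$. Your packaging via the bottom-row parametrization of $\Gamma_\infty\backslash\Gamma_0(M)$ merely replaces the paper's explicit construction of a preimage $\gamma$ (solving $ht_0\equiv -f\pmod p$) with an existence statement, and your remark that $\mu_M$ exists only when $p\nmid M$ is a useful clarification that the paper leaves implicit.
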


\begin{proof}
Suppose $\Gamma_\infty\alpha_{\gamma}=\Gamma_\infty\beta_{\gamma'}$
for some $\gamma=\sm a&b\\c&d\esm,\gamma'=\sm a'&b'\\c'&d'\esm\in\Gamma_0(N)$.
Then $\gamma'\mu_M=\sm 1&t\\0&1\esm \alpha_\gamma$ for some $t\in\Z$, i.e.,
$$\begin{pmatrix} a'x+b'Mz&a'y+b'pw\\ c'x+d'Mz& c'y+d'pw\end{pmatrix}=\begin{pmatrix} a+\frac{ct}{p}&pb+dt\\ \frac{c}{p}& d\end{pmatrix},$$
which implies that $d=c'y+d'pw$. However this cannot occur, because $p|c'y+d'pw$ while $p\nmid d$.
Thus $\mathcal A\cap\mathcal B=\emptyset$. Next, for a given 
$\Gamma_\infty\delta\in\Gamma_\infty\backslash\Gamma_0(M)$ with $\delta=\sm e&f\\g&h\esm\in \G_0(M)$, we show that either $\Gamma_\infty\delta\in \mathcal A$ or $\Gamma_\infty\delta\in \mathcal B$.  
In the case when $p\nmid h$, $h t\equiv -f \pmod{p}$ has a unique solution $t=t_0$ modulo $p$.
If we let $a=e+gt_0$, $b=\frac{f+h t_0}{p}$, $c=pg$, and $d=h$, then we have
\begin{align*} \begin{pmatrix} a&pb\\ \frac{c}{p}& d\end{pmatrix} &=\begin{pmatrix} e+gt_0& f+h t_0\\ g & h\end{pmatrix} \\ &=\begin{pmatrix} 1& t_0\\ 0 & 1\end{pmatrix}\begin{pmatrix} e& f\\ g & h\end{pmatrix}
\end{align*}
and $ad-bc=eh-gf=1$. Hence  $\Gamma_\infty\delta=\Gamma_\infty \alpha_\gamma$ for $\gamma=\sm a&b\\c&d\esm \in\Gamma_0(N)$.
In the case when $p|h$, we let $\gamma=\delta\omega^{-1}$.
Then the $(2,1)$-component of $\gamma$ is equal to $gpw-h Mz\equiv 0\pmod{N}$, because $g\mid M$ and $p|h$. Hence $\gamma\in\Gamma_0(N)$, and thus 
$\Gamma_\infty\delta=\Gamma_\infty \beta_\gamma$. When $p\mid M$, the case when $p\mid h$ does not arise. Therefore, we have proved the lemma. 
\end{proof}

By Lemma \ref{dec} and \eqref{decg}, for $\G=N+S$ with $N=pM$, we have the following decomposition:
\begin{equation}\label{decgi}
\G_\infty\backslash\Gamma^{(p)}=
\begin{cases}
\left(\mathcal A\bigcupdot \mathcal B\right) \bigcupdot_{Q\in S^{(p)}}\left(\mathcal A W_{Q,M} \bigcupdot \mathcal B W_{Q,M}\right),&\hbox{ if } p\nmid M, \\
\mathcal A \bigcupdot_{Q\in S^{(p)}}\mathcal A W_{Q,M},&\hbox{ if } p\mid M.
\end{cases}
\end{equation}

Therefore by \eqref{fgnpt} and \eqref{AB}, if $p\nmid M$, then we have
\begin{align}\label{fgpnpt0}\nonumber
F_{\G^{(p)},n}(\t)
%=&\sum_{D\in\mathcal A}f_n(D(\t))+\sum_{D\in\mathcal B}f_n(D(\t))+\sum_{Q\in S^{(p)}}\left(\sum_{D\in\mathcal A}f_n(DW_{Q,M}(\t))+\sum_{D\in\mathcal B}f_n(DW_{Q,M}(\t))\right)\\
=& \sum_{\Gamma_\infty\gamma\in\Gamma_\infty\backslash\Gamma_0(N)}f_n(\Gamma_\infty\alpha_\gamma (\t))+ \sum_{\Gamma_\infty\gamma\in\Gamma_\infty\backslash\Gamma_0(N)}f_n(\Gamma_\infty\beta_\gamma (\t))\\ 
&+\sum_{Q\in S^{(p)}}\left(\sum_{\Gamma_\infty\gamma\in\Gamma_\infty\backslash\Gamma_0(N)}f_n(\Gamma_\infty\alpha_\gamma W_{Q,M}(\t))+ \sum_{\Gamma_\infty\gamma\in\Gamma_\infty\backslash\Gamma_0(N)}f_n(\Gamma_\infty\beta_\gamma W_{Q,M} (\t))\right),
\end{align}
and if $p\mid M$, then 
\begin{equation}\label{fgpnptm0}
F_{\G^{(p)},n}(\t)
%=\sum_{D\in\mathcal A}f_n(D(\t))+\sum_{Q\in S^{(p)}}\sum_{D\in\mathcal A}f_n(DW_{Q,M}(\t)). 
=\sum_{\Gamma_\infty\gamma\in\Gamma_\infty\backslash\Gamma_0(N)}f_n(\Gamma_\infty\alpha_\gamma (\t))+\sum_{Q\in S^{(p)}}\sum_{\Gamma_\infty\gamma\in\Gamma_\infty\backslash\Gamma_0(N)}f_n(\Gamma_\infty\alpha_\gamma W_{Q,M}(\t)).
\end{equation}

We discover more coset decompositions of $\Gamma_\infty\backslash\Gamma_0(M)$ when $N=pM$.
\begin{lem}\label{dec2}
Let $p$ be a prime divisor of $N$ and write $N=pM$. For $\gamma\in\Gamma_0(N)$, we define $\alpha_\gamma$ and $\beta_\gamma$ as in Lemma \ref{dec}.  Let $\omega_M=\left(\begin{smallmatrix} px&y\\ Mz&w\end{smallmatrix}\right)\in\Gamma_0(M)$ for some $x,y,z,w\in\mathbb Z$.
If $p\nmid M$, then
\begin{equation}\label{dem} \Gamma_\infty\backslash\Gamma_0(M)=
 \Gamma_\infty\backslash\Gamma_0(N)\, \bigcupdot \,\{ \Gamma_\infty\alpha_\gamma \omega_M\,|\,\gamma\in\Gamma_0(N)\}.
\end{equation}
We also decompose  $\Gamma_\infty\backslash\Gamma_0(M)$ as
\begin{equation}\label{dem2} \Gamma_\infty\backslash\Gamma_0(M)=
 \Gamma_\infty\backslash\Gamma_0(N)\, \bigcupdot \,\{  \bigcupdot_{j=0}^{p-1}\Gamma_\infty\beta_\gamma T^j\,|\,\gamma\in\Gamma_0(N)\},
\end{equation}
where $T=\sm 1& 1\\0&1\esm$.
 If $p\mid M$,  then 
\begin{equation}\label{dem3} \G_\infty\backslash\G_0(M)=\bigcupdot_{j=0}^{p-1}\{ \G_\infty\gamma \delta_j\,|\,\gamma\in\G_0(N)\}, \end{equation}
where $\delta_j=\sm a&b\\jcM&d\esm\in\G_0(M)$ such that $\delta_j\equiv I\pmod p$ and $c\equiv c_0\nequiv 0\pmod p$ where $c_0$ is a fixed intger with $(c_0,N)=1$ for $j=0,1,2,\cdots,p-1$.

\end{lem}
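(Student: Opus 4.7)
The plan is to prove all three decompositions by the same two-step strategy: first establish pairwise disjointness of the indicated right cosets, and then invoke a cardinality count using $[\G_0(M):\G_0(N)]=p+1$ when $p\nmid M$ and $=p$ when $p\mid M$, together with the fact that the natural map $\G_\infty\backslash\G_0(N)\hookrightarrow\G_\infty\backslash\G_0(M)$ is injective (since $\G_\infty\subset\G_0(N)$). Once a disjoint family of cosets in $\G_\infty\backslash\G_0(M)$ has the right total size, it automatically exhausts $\G_\infty\backslash\G_0(M)$.

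For \eqref{dem}, disjointness is controlled by the top-left entry mod $p$. Any representative of a coset $\G_\infty\gamma$ with $\gamma\in\G_0(N)$ has lower-left divisible by $p$, so its top-left is coprime to $p$; on the other hand, a direct multiplication shows that $\alpha_{\gamma'}\omega_M$ has top-left equal to $p(ax+bMz)$, which is divisible by $p$. For the cardinality count, Lemma \ref{dec} gives $|\mathcal A|+|\mathcal B|=(p+1)|\G_\infty\backslash\G_0(N)|$, and since right multiplication by $\mu_M$ is a bijection on $\G_\infty\backslash\G_0(M)$, we have $|\mathcal B|=|\G_\infty\backslash\G_0(N)|$, whence $|\mathcal A|=p|\G_\infty\backslash\G_0(N)|$. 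Right multiplication by $\omega_M$ preserves this count, so the two pieces on the right-hand side of \eqref{dem} contribute exactly $(p+1)|\G_\infty\backslash\G_0(N)|=|\G_\infty\backslash\G_0(M)|$ cosets.

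For \eqref{dem2}, the essential computation is
\[
\mu_M T^k \mu_M^{-1}=\begin{pmatrix}1-kMzx & kx^2\\ -kM^2z^2 & 1+kMzx\end{pmatrix},
\]
whose lower-left entry $-kM^2z^2$ is divisible by $pM$ exactly when $p\mid k$, because the determinant relation $pxw-Mzy=1$ forces $p\nmid z$. This implies that the cosets $\mathcal B T^j$, $0\le j\le p-1$, are pairwise disjoint, and each is disjoint from $\G_\infty\backslash\G_0(N)$: if $\G_\infty\gamma_1=\G_\infty\gamma_2\mu_M T^j$ with $\gamma_i\in\G_0(N)$, then $\mu_M T^j$ would lie in $\G_0(N)$ up to an element of $\G_\infty$, but its lower-left is $Mz$, again forcing $p\mid z$. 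A parallel count gives $|\G_\infty\backslash\G_0(N)|+p|\mathcal B|=(p+1)|\G_\infty\backslash\G_0(N)|$, matching $|\G_\infty\backslash\G_0(M)|$.

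For \eqref{dem3}, disjointness between distinct values of $j$ reduces to showing that $\delta_{j'}\delta_j^{-1}\in\G_0(N)$ forces $j=j'$. A direct calculation gives the lower-left entry of $\delta_{j'}\delta_j^{-1}$ as $M(j'c_{j'}d_j-jc_jd_{j'})$, which modulo $pM$ reduces to $M(j'-j)c_0$ by the imposed congruences $a_j,d_j\equiv 1$ and $c_j\equiv c_0\pmod p$. Since $(c_0,p)=1$ and $0\le j,j'<p$, divisibility by $pM$ forces $j=j'$; within a single slice, disjointness is immediate from $\G_\infty\subset\G_0(N)$. The cardinality count $p\cdot|\G_\infty\backslash\G_0(N)|=|\G_\infty\backslash\G_0(M)|$ then finishes the proof. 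The main obstacle throughout is keeping the mod-$p$ bookkeeping for the lower-left entries (and the fact that $p\nmid z$ in Part 2) absolutely straight; once those divisibility computations are in place, the disjointness-plus-cardinality closure is routine.
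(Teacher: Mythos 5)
Your disjointness computations are essentially sound (and close in spirit to the paper's, which uses the lower-left or lower-right entries mod $p$ where you use the upper-left; either invariant works once one notes it is stable under left multiplication by $\G_\infty$). The genuine gap is in the second half of your strategy: the ``cardinality count'' does not close the argument, because $\G_\infty$ has \emph{infinite} index in $\G_0(M)$, so $\G_\infty\backslash\G_0(M)$ and $\G_\infty\backslash\G_0(N)$ are both countably infinite sets. Statements like $|\mathcal A|+|\mathcal B|=(p+1)|\G_\infty\backslash\G_0(N)|$, and especially the subtraction ``whence $|\mathcal A|=p|\G_\infty\backslash\G_0(N)|$,'' are vacuous or invalid as cardinal arithmetic, and the closing principle ``a disjoint family of the right total size automatically exhausts the set'' is simply false for infinite sets (the even integers have the same cardinality as $\Z$ without exhausting it). The finite number $[\G_0(M):\G_0(N)]\in\{p,p+1\}$ counts $\G_0(N)$-cosets, not $\G_\infty$-cosets, and transferring it to the $\G_\infty$-coset space would require showing that each piece of your putative decomposition is a union of \emph{full} fibers of the projection $\G_\infty\backslash\G_0(M)\to\G_0(N)\backslash\G_0(M)$ and counting those fibers --- none of which appears in your write-up.

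What is actually needed (and what the paper does) is a direct surjectivity argument: take an arbitrary $\delta=\sm e&f\\g&h\esm\in\G_0(M)$ and locate $\G_\infty\delta$ explicitly in one of the pieces by a case analysis on divisibility of its entries by $p$. For \eqref{dem} one checks that if $p\nmid g$ then $\delta\omega_M^{-1}$ has lower-right entry prime to $p$, so $\G_\infty\delta\omega_M^{-1}\in\mathcal A$ by the argument of Lemma \ref{dec}; for \eqref{dem2} one solves $pgw-(h+gj)Mz\equiv0\pmod N$ for a unique $j$ mod $p$ to get $\delta T^{j}\mu_M^{-1}\in\G_0(N)$; and for \eqref{dem3} --- the case your proposal skips entirely --- one must construct $\gamma_1\in\G_0(N)$ with prescribed $(1,1)$-entry $e'\equiv e^{-1}\pmod N$ so that $T^{-k}\gamma_1\delta\equiv I\pmod p$ and then recognize $T^{-k}\gamma_1\delta$ as an admissible $\delta_j$ with $j\neq 0$. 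Without these constructions the containment ``$\supseteq$'' in each of \eqref{dem}, \eqref{dem2}, \eqref{dem3} is unproved.
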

\begin{proof} One can easily check that the $(2,1)$-component of $\alpha_\gamma\omega_M$ is not divisible by $p$, hence the two sets on the right-hand side of \eqref{dem} are disjoints.
For a given 
$\delta=\sm e&f\\g&h\esm\in \G_0(M)$ with $p\nmid g$, we find that the $(2,2)$-component of $\delta\omega_M^{-1}$ is equal to $-gy+ph x$ which is not divisible by $p$.
Thus from the proof of Lemma \ref{dec}, $\G_\infty\delta\omega_M^{-1}\in \mathcal A$. That is, $\G_\infty\delta\omega_M^{-1}=\G_\infty\alpha_\gamma$ for some $\gamma\in\G_0(N)$. Hence $\G_\infty\delta=\G_\infty\alpha_\gamma\omega_M$, and \eqref{dem} holds.

For  \eqref{dem2}, first note that $\G_\i\g'\neq \G_\i\beta_\g T^j$ for any $\g,\g'\in \G_0(N)$ and $j$, because $\beta_\g T^j$ cannot be contained in $\G_0(N)$. Next, observe that if $i\neq j$, then $\G_\i\beta_\g T^j\neq\G_\i\beta_{\g'} T^j$, because otherwise $\G_\i\beta_\g T^i=\G_\i\beta_{\g'}$ for some $\g,\g'\in \G_0(N)$ and a positive $i$, but it is a contradiction to the fact that (2,2)-component of $\beta_{\g'}$ is divisible by $p$ while that of $\beta_{\g} T^i$ ($i\neq 0$) is not. So all the sets in the right-hand side of \eqref{dem2} are disjoints. Now, let $\delta=\sm e&f\\g&h\esm\in \G_0(M)$ with $p\nmid g$ given. Then from the proof of Lemma \ref{dec}, we see that if $p\mid h$, then $\G_\infty\beta_\g=\G_\i\delta$ for some $\g\in \G_0(N)$. If $p\nmid h$, we verify that the $(2,1)$-component of $\delta T^j \mu_M^{-1}$ is equal to 
\begin{equation}\label{pgw}
pgw-(h+gj)Mz.
\end{equation}
Since $p\nmid g$ and $p\nmid h$, there exists a unique $j$ mod $p$ such that \eqref{pgw} is divisible by $N$, and hence $\delta T^j \mu_M^{-1}\in \G_0(N)$ for the $j$. This implies that $\G_\infty\delta=\G_\infty\beta_\gamma T^j$ for some $\g\in \G_0(N)$ and $j$, which ends the proof of \eqref{dem2}.

Lastly, we assume $p\mid M$ and prove \eqref{dem3}. 
Let $\delta_j=\sm a&b\\jcM&d\esm$ and $\delta_j'=\sm a'&b'\\jc'M&d'\esm$.
Then the $(2,1)$-component of $\delta_j'\delta_j^{-1}$ is equal to $jc'dM-jcd'M$ and is divisible by $N$, because $d\equiv d'\equiv 1\pmod p$ and $c\equiv c'\equiv c_0\pmod p$ imply that $jc'd-jcd'\equiv (d-d')jc_0\equiv 0\pmod p$. Thus $\delta_j'\delta_j^{-1}\in\G_0(N)$ and \eqref{dem3} is independent of the choice of $\delta_j$'s.
Using the same argument, we can show that $\delta_i\delta_j^{-1}=\delta_{i-j}$ if $i> j$.
% let $\delta_i=\sm a&b\\icM&d\esm$ and $\delta_j=\sm a'&b'\\ic'M&d'\esm$.
%Then the $(2,1)$-component of $\delta_i\delta_j^{-1}$ is equal to $(icd'-jc'd)M.$
%Since $c\equiv c'\pmod p$ and $d\equiv d'\equiv 1\pmod p$, thus $icd'-jc'd\equiv (i-j)c_0\pmod p$; hence $\delta_i\delta_j^{-1}\neq I$.
Thus in order to show the sets on the right-hand side of \eqref{dem3} are disjoint, it suffices to prove $\G_\infty\gamma\delta_j\neq \G_\infty\gamma$ if $j\neq 0$. But this is obviously true, because $\delta_j\notin \G_0(N)$.
Finally, let $\delta=\sm e&f\\g&h\esm\in \G_0(M)$ with $N\nmid g$ given. Since $(e,N)=1$, there exists an $e'$ such that $ee'\equiv 1\pmod N$, and hence we can find a $\gamma_1\in\G_0(N)$ whose $(1,1)$-component is $e'$ and  $\gamma_1\delta\equiv \sm 1 &k\\ 0 & 1\esm \pmod{p}.$
Thus $T^{-k}\gamma_1\delta\equiv I\pmod{p}$ and it is easy to check its $(2,1)$-component is a multiple of $M$ but not of $N$. 
Accordingly, we can take $\delta_j=T^{-k}\gamma_1\delta$ for some $j\neq 0$,  from which we derive that $\G_\infty\delta=\G_\infty\gamma\delta_j$ by setting $\gamma=\gamma_1^{-1}T^k$. Therefore, we have proved \eqref{dem3}.
\end{proof}

Next, we investigate a relation between $\G_\i\backslash \G_0(N)$ and $\G_\i\backslash \G_0(M)$.  For positive integers $M$ and $N$, we let  $\Gamma_0(N,M)=\{\sm a&b\\c&d\esm\in{\rm SL}_2(\Z)\,|\, N|c, M|b\}$ and for $n\in \mathbb N$, let $\Gamma_\infty^{(n)}=\{\sm 1&nt\\0&1\esm\,|\, t\in\Z\}$.
\begin{lem}\label{11}
Let $p$ be a prime divisor of $N$ and write $N=pM$.
For $\gamma=\left(\begin{smallmatrix} a&b\\c&d\end{smallmatrix}\right)\in\Gamma_0(N)$,
we let $\alpha_\gamma=\left(\begin{smallmatrix} a&pb\\ \frac{c}{p}&d\end{smallmatrix}\right)\in\Gamma_0(M)$ as in Lemma \ref{dec}.
Then the map $\psi:\Gamma_\infty\backslash \Gamma_0(N)\to\Gamma_\infty^{(p)}\backslash\Gamma_0(M,p)$ defined by $\psi(\Gamma_\infty\gamma)=\Gamma_\infty^{(p)}\alpha_\gamma$ is a 1-1 correspondence.
\end{lem}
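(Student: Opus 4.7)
The plan is to verify that $\psi$ is well-defined and to produce an explicit two-sided inverse. First I would check that $\alpha_\gamma$ lies in $\Gamma_0(M,p)$ for every $\gamma=\sm a&b\\c&d\esm\in\Gamma_0(N)$: since $N=pM$ divides $c$, the entry $c/p$ is an integer divisible by $M$; the entry $pb$ is obviously divisible by $p$; and $\det\alpha_\gamma=ad-(c/p)(pb)=ad-bc=1$. So the assignment $\gamma\mapsto\alpha_\gamma$ indeed takes values in $\Gamma_0(M,p)$.

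For well-definedness of $\psi$ on cosets, the key computation is the intertwining identity
\begin{equation*}
\alpha_{T^t\gamma}=T^{pt}\,\alpha_\gamma,
\end{equation*}
where $T=\sm 1&1\\0&1\esm$, which one verifies by direct matrix multiplication. Hence changing $\gamma$ within its $\Gamma_\infty$-coset changes $\alpha_\gamma$ only within its $\Gamma_\infty^{(p)}$-coset, so $\psi$ is well-defined. The same identity gives injectivity: if $\alpha_{\gamma'}=T^{pt}\alpha_\gamma$ for some $t\in\Z$, then matching entries forces $c'=c$, $d'=d$, $a'=a+tc$, $b'=b+td$, which is $\gamma'=T^t\gamma$, so $\Gamma_\infty\gamma=\Gamma_\infty\gamma'$.

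For surjectivity, given $\delta=\sm a'&b'\\c'&d'\esm\in\Gamma_0(M,p)$, the divisibility conditions $M\mid c'$ and $p\mid b'$ allow me to set $\gamma:=\sm a'&b'/p\\pc'&d'\esm$. Then $\gamma\in\mathrm{SL}_2(\Z)$ with $\det\gamma=a'd'-b'c'=1$, its lower-left entry $pc'$ is divisible by $pM=N$, so $\gamma\in\Gamma_0(N)$, and one reads off $\alpha_\gamma=\delta$. Therefore $\psi(\Gamma_\infty\gamma)=\Gamma_\infty^{(p)}\delta$, and $\psi$ is surjective.

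I do not anticipate a serious obstacle here: the statement is essentially that $\gamma\mapsto\alpha_\gamma$ is implemented by the conjugation by $\sm p&0\\0&1\esm$ (after clearing denominators), which rescales the translation stabilizer by a factor of $p$. The only real bookkeeping is keeping the divisibility conditions defining $\Gamma_0(M,p)$ straight so that the inverse assignment $\delta\mapsto\gamma$ makes sense and lands in $\Gamma_0(N)$.
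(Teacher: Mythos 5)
Your proof is correct and follows essentially the same route as the paper's: the same entrywise computation establishes injectivity (and, as you note, well-definedness, which the paper leaves implicit), and the same explicit inverse assignment $\delta\mapsto\gamma$ gives surjectivity. The only difference is that you spell out the verification that $\alpha_\gamma\in\Gamma_0(M,p)$ and the intertwining identity $\alpha_{T^t\gamma}=T^{pt}\alpha_\gamma$, which the paper takes for granted.
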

\begin{proof} Suppose $\Gamma_\infty^{(p)}\alpha_\gamma=\Gamma_\infty^{(p)}\alpha_{\gamma'}$, where $\gamma=\sm a&b\\c&d\esm$ and $\gamma'=\sm a'&b'\\c'&d'\esm$.
Then
\begin{align*} \begin{pmatrix} a'&pb'\\ \frac{c'}{p}& d'\end{pmatrix} &=\begin{pmatrix} 1& pt\\ 0 & 1\end{pmatrix}\begin{pmatrix} a& pb\\ \frac{c}{p} & d\end{pmatrix} \\ &=\begin{pmatrix} a+ct& p(b+dt)\\ \frac{c}{p} & d\end{pmatrix}.
\end{align*}
Thus $a'=a+ct$, $b'=b+dt$, $c'=c$, and $d'=d$, which implies that $\Gamma_\infty\gamma=\Gamma_\infty\gamma'$. That is, $\psi$ is injective. For any $\delta=\sm a&pb\\ Mc&d\esm\in\Gamma_0(M,p)$, let $\gamma=\sm a&b\\ pMc&d\esm\in\Gamma_0(N)$. Then $\psi(\Gamma_\infty\gamma)=\Gamma_\infty^{(p)}\alpha_\gamma=\Gamma_\infty^{(p)}\delta$, which proves that $\psi$ is surjective.
\end{proof}

Before we end this section, we list several properties of Atkin-Lehner involutions  that will be used later.  If $e$ is an exact divisor of $N$, that is, $e|N$ and $(e,N/e):= gcd(e, N/e)=1$, then we write $e\parallel N$. For any $e\parallel N$, the Atkin-Lehner involutions are given by the matrices
 $$W_{e,N}=\begin{pmatrix} ex&y\\Nz&ew \end{pmatrix},$$ 
where $\det (W_{e,N})=e$ and $x,y,z,w \in\Z$. These matrices $W_{e,N}$ normalize $\G_0(N)$ and $\G/\G_0(N)$ is an elementary abelian 2-group. (\cite[Lemma 9]{AL})

\begin{lem}\label{ko}
Let p be a prime divisor of N and write N = pM.

\begin{enumerate}
\item \label{ko1} Let $p\nmid Q$, and take $W_{Q,M}=\begin{pmatrix} Qx&y\\Mz&Qw\end{pmatrix}$ such that $p\mid z$.
Then for each $j\in\Z$, there exists a unique $i$ modulo $p$ such that
\begin{equation*}
\begin{pmatrix} 1&j\\0&p\end{pmatrix}W_{Q,M}=W_{Q,pM}\begin{pmatrix} 1&i\\0&p\end{pmatrix}
\end{equation*}
for some $W_{Q,pN}$.

\item \label{ko2} Recall that $V_p=\begin{pmatrix} p&0\\0&1\end{pmatrix}$.  If $p\nmid Q$, then
\begin{equation*}
V_pW_{Q,pN}=W_{Q,N}V_p.
\end{equation*}

\item\label{ko3} Assume $p\mid M$. If $p\mid Q$, then there exists some $\delta_j\in\G_0(M)$ such that $\delta_j\equiv I\pmod p$ and
\begin{equation*}
W_{Q,M}T^j=\delta W_{Q,M},
\end{equation*}
where $T=\begin{pmatrix} 1&1\\0&1\end{pmatrix}$.
Moreover, $\delta_j\notin\G_0(N)$ if $j\neq 0.$

\item\label{ko4} Assume $p^\nu \| M$. If $p\nmid Q$, then
\begin{equation*}
W_{p^{\nu+1}Q,N}=W_{p^\nu Q,M}V_p,\hbox{ and }
V_pW_{p^{\nu+1}Q,N}=W_{p^\nu Q,M}\begin{pmatrix} p&0\\0&p\end{pmatrix}.
\end{equation*}

\end{enumerate}
\end{lem}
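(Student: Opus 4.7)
The plan is to prove each of the four assertions by direct $2\times 2$ matrix computation with carefully chosen representatives of the Atkin--Lehner involutions; part (1) carries the main subtlety, and parts (2)--(4) reduce to essentially bookkeeping.

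For (1), I would fix $W_{Q,M} = \sm Qx & y \\ Mz & Qw \esm$ with $Q^2 xw - Myz = Q$, and use $p \mid z$ (writing $z = pz_0$) to expand $\sm 1 & j \\ 0 & p \esm W_{Q,M} = \sm Qx + jMz & y + jQw \\ pMz & pQw \esm$. Positing $W_{Q,pM} = \sm Q\bar x & \bar y \\ pM\bar z & Q\bar w \esm$ and setting $W_{Q,pM}\sm 1 & i \\ 0 & p \esm$ equal to the above, matching the first column forces $\bar z = z$ and $\bar x = x + jMz/Q$ (an integer, since $Q \| M$ implies $Q \mid M$); the $(2,2)$-entry forces $\bar w = w - iMz/Q$; and the $(1,2)$-entry produces the congruence $iQx \equiv y + jQw \pmod p$. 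Reducing $Q^2 xw - Myz = Q$ modulo $p$ (with $p \mid z$) shows $p \nmid x$, so $Qx$ is a unit modulo $p$ and $i$ is uniquely determined modulo $p$. The correct determinant of $W_{Q,pM}$ is automatic from matching determinants on both sides.

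For (3), I would compute $\delta_j := W_{Q,M} T^j W_{Q,M}^{-1}$ explicitly using $W_{Q,M}^{-1} = \frac{1}{Q}\sm Qw & -y \\ -Mz & Qx \esm$ together with the determinant identity, obtaining $\delta_j = \sm 1 - jMxz & jQx^2 \\ -jM(M/Q)z^2 & 1 + jMxz \esm$ (where $Q \| M$ is used to rewrite $-jM^2 z^2/Q$). This is an integer matrix with $(2,1)$-entry divisible by $M$, so $\delta_j \in \G_0(M)$; the hypotheses $p \mid Q$ and $p \mid M$ give $\delta_j \equiv I \pmod p$ entrywise. Finally, $(Q, M/Q) = 1$ combined with $p \mid Q$ forces $p \nmid M/Q$, so reducing $Qxw - (M/Q)yz = 1$ modulo $p$ forces $p \nmid z$; hence for $1 \leq j \leq p-1$ the $(2,1)$-entry of $\delta_j$ is a multiple of $M$ but not of $N = pM$, so $\delta_j \notin \G_0(N)$.

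Parts (2) and (4) are verified by direct matrix multiplication with suitable representatives. For (2), $V_p W_{Q,pN} = \sm pQx & py \\ pNz & Qw \esm$ coincides with $W_{Q,N} V_p$ upon choosing $W_{Q,N} := \sm Qx & py \\ Nz & Qw \esm$, whose determinant $Q^2 xw - pNyz = Q$ is inherited from $W_{Q,pN}$. Part (4) is similar: the products $W_{p^\nu Q, M} V_p$ and $W_{p^\nu Q, M}\sm p & 0 \\ 0 & p \esm$ are computed directly and matched with representatives of $W_{p^{\nu+1}Q, N}$ and $V_p W_{p^{\nu+1}Q, N}$ respectively, using the determinant $p^{\nu+1}Q$ and the correct first-column divisibilities. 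The only real obstacle lies in (1): one must guarantee simultaneously the integrality of all entries of $W_{Q,pM}$ (which relies on $Q \| M$) and the unique solvability of the congruence for $i$ modulo $p$ (which relies on $p \nmid x$, extracted from the determinant condition on $W_{Q,M}$).
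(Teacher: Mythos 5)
Your proposal is correct and follows essentially the same route as the paper: for part (3) your explicit conjugation $W_{Q,M}T^jW_{Q,M}^{-1}$ reproduces exactly the matrix displayed in the paper's proof, and your justification that $p\nmid (M/Q)z^2$ (via the determinant relation $Qxw-(M/Q)yz=1$ reduced mod $p$) supplies the detail the paper only asserts. Parts (1), (2), (4) are outsourced by the paper to Koike's Lemmas 2.5, 2.6 and 2.8, and your direct verifications with suitably chosen representatives (in particular extracting $p\nmid x$ from the determinant condition in (1)) are a correct, self-contained version of the same computations.
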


\begin{proof} Since others are found in Lemmas 2.5, 2.6 and 2.8 of \cite{Koike},  we prove only \eqref{ko3}.
By direct computation, we have
\begin{equation}\label{wtw}
W_{Q,M}T^j W_{Q,M}^{-1}=\begin{pmatrix}Qxw - jMxz - \frac{M}{Q}yz & jQx^2\\ j\left(-\frac{M}{Q}z^2\right)M& Qxw + jMxz - \frac{M}{Q}yz\end{pmatrix}\in\G_0(M).
\end{equation}
Since $\det(W_{Q,M})=Q$, $Qxw-\frac{M}{Q}yz=1$. Thus the first part of \eqref{ko3} is true.  In addition, since $p\nmid  \left(-\frac{M}{Q}z^2\right)$, the rest holds as well.
\end{proof}

\section{Proof of $p$-plication formula for $J_{\G,n}(\t)$}

In order to prove the {\it{$p$-plication formula}} for Hauptmoduls \cite[Theorem 1.1]{Koike}, Koike divided all $\G=N+S$ when $N=pM$ for a prime $p$ into the following five cases: 
\begin{itemize}
\item (Case 1) $p\nmid M$ and $p\in S$,
\item (Case 2) $p\nmid M$ and $p\nmid Q$ for any $Q\in S$,
\item (Case 3) $p\nmid M$ and there exists an exact divisor $Q_0\neq1$ of $M$ such that $pQ_0\in S$,
\item (Case 4) $p\mid M$ and there exists some  $pQ_0\in S$,
\item (Case 5) $p\mid M$ and $p\nmid Q$ for any $Q\in S$.
\end{itemize}
He then proved the {\it{expansion formula}} \cite[Theorem 4.1]{Koike} and the {\it{compression formula}} \cite[Theorem 3.1]{Koike} for each case, and combined them into one formula, the $p$-plication formula.
We also need to consider each of five cases separately as the coset decompositions of $\G_\i\backslash \G^{(p)}$ to define $j^{(p)}_{\G,n}$ vary depending on $p$ and $S$. %
We first prove expansion formulas of $j_{\G,n}$ for all five cases.
\begin{prop}[{\it{Expansion formulas}}]  \label{TnN} Let $p$ be a prime divisor of $N$ and write $N=pM$. 
%Also, let $W_{p,N}= \left(\begin{smallmatrix} px & y \\ Nz & pw \end{smallmatrix} \right)$ with  $x,y,z,w\in\Z $ and $\det(W_p)=p$.
Then the following hold:
\begin{itemize}
\item {\rm (Case 1)} $j_{\G^{(p)},n}(\t)+j_{\G^{(p)},n}(p\t)=j_{\G,pn}(\t)+j_{\G,n}(\t).$
\item {\rm (Case 2)} $j_{\G^{(p)},n}(p\t)=j_{\G,pn}(\t)+j_{\G,n}(\t)|{W_{p,N}}.$
\item {\rm (Case 3)} $j_{\G^{(p)},n}(\t)+j_{\G^{(p)},n}(p\t)|{W_{Q_0,N}}=j_{\G,n}(\t)+j_{\G,pn}(\t)|{W_{Q_0,N}}.$
\item {\rm (Case 4)} $j_{\G^{(p)},n}(p\t)+j_{\G^{(p)},n}(\t)|{W_{Q_0,M}}=j_{\G,pn}(\t).$
\item {\rm (Case 5)} $j_{\G^{(p)},n}(p\t)=j_{\G,pn}(\t).$
\end{itemize}
\end{prop}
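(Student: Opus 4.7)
The strategy is to expand both sides of each formula as Niebur-Poincaré series and match them coset-by-coset, using the decompositions of $\G_\infty\backslash\G^{(p)}$ given by Lemma \ref{dec} and \eqref{decgi} together with the Atkin-Lehner relations from Lemma \ref{ko}. The key pointwise identity is $f_n(pz) = f_{pn}(z)$, which follows directly from \eqref{fn} since $\mathrm{Re}(pz) = p\,\mathrm{Re}(z)$ and $\mathrm{Im}(pz) = p\,\mathrm{Im}(z)$. The identities will first be established for the series $F_{\G^{(p)},n}(\cdot,s)$ on $\mathrm{Re}(s)>1$, where absolute convergence justifies rearranging coset sums, and then specialized to $s=1$ via Niebur's analytic continuation.

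The main algebraic move rests on the direct matrix multiplication $\alpha_\g V_p = V_p \g$ for every $\g\in\G_0(N)$. Consequently, for a representative $\alpha_\g$ of the $\mathcal A$-part of $\G_\i\backslash\G^{(p)}$,
$$f_n(\alpha_\g \cdot p\t) \;=\; f_n(V_p \g\t) \;=\; f_n(p\cdot \g\t) \;=\; f_{pn}(\g\t),$$
which converts the $\mathcal A$-part of $F_{\G^{(p)},n}(p\t,s)$ into a sum over $\G_\i\backslash\G_0(N)$ with index $pn$. For the $\mathcal B$-part, $\beta_\g V_p = \g(\mu_M V_p)$, and writing $\mu_M=\sm x&y\\Mz&pw\esm$ gives $\mu_M V_p = \sm px&y\\Nz&pw\esm$, which has the exact shape of $W_{p,N}$ with determinant $p$; thus $f_n(\beta_\g\cdot p\t)=f_n(\g W_{p,N}\t)$, contributing to $j_{\G,n}(\t)|W_{p,N}$ rather than to $j_{\G,pn}(\t)$. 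For the Atkin-Lehner twists $W_{Q,M}$ appearing in $\G^{(p)}$, Lemma \ref{ko}(2) (with $N$ replaced by $M$) gives $V_p^{-1}W_{Q,M}V_p = W_{Q,N}$ modulo $\G_0(N)$ when $p\nmid Q$, so $\mathcal A W_{Q,M}$-cosets transform to $W_{Q,N}$-twisted cosets of $\G$ contributing to $j_{\G,pn}(\t)$, while $\mathcal B W_{Q,M}$-cosets contribute to $j_{\G,n}(\t)|W_{p,N}$. In Cases 1, 3, 4 the Atkin-Lehner involution $W_{pQ_0,N}$ (or $W_{p,N}$) lies in $\G$, and then Lemma \ref{ko}(4) identifies the $\mathcal B W_{Q_0,M}$-type cosets of $\G^{(p)}$ with $\mathcal A W_{pQ_0,N}$-type cosets of $\G$, which is what absorbs the $W_{p,N}$-twist and produces the mixed form of the identities. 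In Case 4 the $p\mid M$ decomposition \eqref{dem3} together with Lemma \ref{ko}(3) is used to aggregate the $p$ right-translates $\delta_j$ and show that $j_{\G^{(p)},n}(\t)|W_{Q_0,M}$ accounts exactly for the $W_{pQ_0,N}$-twisted piece of $j_{\G,pn}(\t)$.

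With these dictionaries in hand, each of the five cases reduces to a straight matching of cosets: on the left, a subset of $\G_\i\backslash\G^{(p)}$ (possibly twisted by a $W_{Q,M}$ and/or pre-composed with $V_p$) is bijected via $V_p$-conjugation with the cosets of $\G_\i\backslash\G$ (possibly twisted by a $W_{Q,N}$) that appear on the right. The main obstacle is the bookkeeping across the five cases: the shape of $S^{(p)}$, the choice between the $p\mid M$ and $p\nmid M$ decompositions in \eqref{decgi}, and the fact that in Cases 3 and 4 the level of the Atkin-Lehner involutions changes between $N$ and $M$ via the $p$-part of $Q$ force Lemma \ref{ko}(4) to be invoked in both directions. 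Once these identifications are spelled out, the arguments run exactly parallel to the Hauptmodul case in \cite[Theorem 4.1]{Koike}, with the replacement of $q$-expansions by Niebur-Poincaré series and the translation-scaling identity $f_n(pz)=f_{pn}(z)$.
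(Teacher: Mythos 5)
Your strategy coincides with the paper's: establish the identities for $F_{\G,n}(\cdot,s)$ on $\mathrm{Re}(s)>1$, decompose $\G_\i\backslash\G^{(p)}$ via Lemma \ref{dec} and \eqref{decgi}, convert cosets through $\alpha_\g V_p=V_p\g$ and $\beta_\g V_p=\g W_{p,N}$ together with the Atkin--Lehner relations of Lemma \ref{ko}, and then continue analytically to $s=1$. The dictionary you set up correctly disposes of every term of the form $j_{\G^{(p)},n}(p\t)$, possibly twisted, and hence of Cases 2 and 5 and of the $p\t$-terms in the remaining cases.

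Two points are not covered by the dictionary as you state it. First, the untwisted terms $j_{\G^{(p)},n}(\t)$ on the left of Cases 1 and 3 are evaluated at $\t$, not at $p\t$, so neither $\alpha_\g V_p=V_p\g$ nor $\beta_\g V_p=\g W_{p,N}$ applies to them; the paper handles these with the \emph{second} decomposition \eqref{dem} of Lemma \ref{dec2}, namely $\G_\i\backslash\G_0(M)=\G_\i\backslash\G_0(N)\,\dotcup\,\{\G_\i\alpha_\g\omega_M\}$, combined with $\omega_M=V_pW_{p,N}$ (see \eqref{vpwp}), which turns $f_n(\alpha_\g\omega_M\t)$ into $f_{pn}(\g W_{p,N}\t)$. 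You cite only Lemma \ref{dec}, which does not provide this, so Cases 1 and 3 are incomplete as sketched. Second, your account of Case 4 misattributes the tools: when $p\mid M$ there are no $\mathcal B$-cosets at all by \eqref{decom}, and \eqref{dem3} together with Lemma \ref{ko}\,(3) is what the paper uses for the \emph{compression} formula, not here. The term $j_{\G^{(p)},n}(\t)|W_{Q_0,M}$ is instead processed with the second identity of Lemma \ref{ko}\,(4), $V_pW_{pQ_0,N}=W_{Q_0,M}\sm p&0\\0&p\esm$, which rewrites $\alpha_\g W_{Q_0,M}$ as $V_p\g W_{pQ_0,N}$ and thereby supplies the $W_{pQ_0,N}$-twisted pieces (and, via the elementary abelian $2$-group structure of $S$ modulo $\G_0(N)$, the remaining $p\mid Q'$ twists) of $j_{\G,pn}(\t)$. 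A minor caveat shared with the paper: with $f_n$ as literally written in \eqref{fn} one indeed has $f_n(p\t)=f_{pn}(\t)$, but the prefactor $2\pi\sqrt n$ in \eqref{gp} then leaves a stray $\sqrt p$; with Niebur's normalization $(\mathrm{Im}\,\t)^{1/2}$ in place of $(n\,\mathrm{Im}\,\t)^{1/2}$ one gets $f_n(p\t)=\sqrt p\,f_{pn}(\t)$ and the constants match. None of this changes the architecture of your argument, but the extra decomposition \eqref{dem} and the correct clause of Lemma \ref{ko} are genuinely needed to close Cases 1, 3 and 4.
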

\begin{rmk}  Theorem 1.1 (3) of \cite{BKLOR} gives a special case of Case (5) when $\G=\G_0(N)$. (It is not correct as stated. It is correct when a prime $p$ satisfies $p^2\mid N$ (see \cite{BKLOR-c}.))  
\end{rmk}

\begin{proof}[Proof of Proposition \ref{TnN}]
Instead of $j_{\G,n}(\t)$, it suffices to prove the results for $F_{\G,n}(\t,s)$ and use analytic continuation. We begin by pointing out that among the sets of exact divisors of $N$ that generate the same $\G=N+S$, we take $S$ the largest. Also, we note that $p\nmid Q$ for any $Q\in S^{(p)}$. 

We first prove the last two cases when $p\mid M$.
If $p\mid M$, then it follows from \eqref{fgpnptm0}  and properties of Atkin-Lehner involutions, including \eqref{ko2} of Lemma \ref{ko} that 
\begin{align}\label{fgpnptm}
F_{\G^{(p)},n}(p\t)=&\sum_{G\in\Gamma_\infty\backslash\Gamma_0(N)}f_n(pG(\t))+\sum_{Q\in S^{(p)}}\sum_{G\in\Gamma_\infty\backslash\Gamma_0(N)}f_n(pGW_{Q,N}(\t)), \end{align}
because for any $\g\in\G_0(N)$, $\alpha_\gamma V_p = V_p\gamma$ and for any integer  $j$, $V_pT^j=T^{pj}V_p$.

$\cdot$\ Case (5):  Since $S=S^{(p)}$, \eqref{fgpnptm} is simply a statement of
 $$F_{\G^{(p)},n}(p\t)=F_{\G,pn}(\t),$$ which proves the expansion formula.

$\cdot$\ Case (4): Since $p\mid M$ and $pQ_0\parallel N$, $p\mid Q_0$ and $Q_0\parallel M$. By \eqref{fgpnptm0} in the first equality and by using the second identity in \eqref{ko4} of Lemma \ref{ko} in the second equality below, we have
\begin{align}\label{fgpnw} \nonumber
F_{\G^{(p)},n}(\t)|W_{Q_0,M}=& \sum_{\G_\infty\gamma\in\Gamma_\infty\backslash\Gamma_0(N)}f_n(\G_\infty\alpha_\gamma W_{Q_0,M}(\t))+\sum_{Q\in S^{(p)}}\left(\sum_{\G_\infty\gamma\in\Gamma_\infty\backslash\Gamma_0(N)}f_n(\G_\infty\alpha_\gamma W_{Q,M} W_{Q_0,M}(\t))\right)\\ 
=& \sum_{\G_\infty\gamma\in\Gamma_\infty\backslash\Gamma_0(N)}f_n(\G_\infty\alpha_\gamma V_pW_{pQ_0,N}(\t))+\sum_{Q\in S^{(p)}}\left(\sum_{\G_\infty\gamma\in\Gamma_\infty\backslash\Gamma_0(N)}f_n(\G_\infty\alpha_\gamma V_p W_{pQ_0,N} W_{Q,M} (\t))\right)\\ \nonumber
=& \sum_{G\in\Gamma_\infty\backslash\Gamma_0(N)}f_n(pGW_{pQ_0,N}(\t))+\sum_{Q\in S^{(p)}}\left(\sum_{G\in\Gamma_\infty\backslash\Gamma_0(N)}f_n(pG W_{pQ_0,N} W_{Q,M} (\t))\right).
\end{align}
The last equality is obtained as before.  If $Q'\in S$ and $p\nmid Q'$, then $Q'\in S^{(p)}$. If $Q'\in S$ and $p\mid Q'$, then either $Q'=pQ_0$ or $W_{Q',N}W_{pQ_0,N}= W_{Q,N}$ for some $Q\in S^{(p)}$. Here, as $Q$ satisfies both $Q\| M$ and $Q\| N$, $W_{Q,M}$ or $W_{Q,N}$ can be used interchangeably, whenever applicable.  Hence by combining \eqref{fgpnptm} and \eqref{fgpnw} together, we find that
$$F_{\G^{(p)},n}(p\t)+F_{\G^{(p)},n}(\t)|W_{Q_0,M}=F_{\G,pn}(\t).$$
Hence we have the expansion formula.

From now, we assume $p\nmid M$. 
%by \eqref{AB} and \eqref{fgpnptnm0}, we have that 
%\begin{align}\label{fgpnpt0}\nonumber
%F_{\G^{(p)},n}(p\t)=& \sum_{\Gamma_\infty\gamma\in\Gamma_\infty\backslash\Gamma_0(N)}f_n(\Gamma_\infty\alpha_\gamma (p\t))+ \sum_{\Gamma_\infty\gamma\in\Gamma_\infty\backslash\Gamma_0(N)}f_n(\Gamma_\infty\beta_\gamma (p\t))\\ 
%&+\sum_{Q\in S^{(p)}}\left(\sum_{\Gamma_\infty\gamma\in\Gamma_\infty\backslash\Gamma_0(N)}f_n(\Gamma_\infty\alpha_\gamma W_{Q,M}(p\t))+ \sum_{\Gamma_\infty\gamma\in\Gamma_\infty\backslash\Gamma_0(N)}f_n(\Gamma_\infty\beta_\gamma W_{Q,M} (p\t))\right). 
%\end{align}
%
Since $\beta_\gamma V_p=\gamma W_{p,N}$ for any $\g\in\G_0(N)$, we obtain from \eqref{fgpnpt0} that 
\begin{align}\label{fgpnptnm}\nonumber
F_{\G^{(p)},n}(p\t)=&\sum_{G\in\Gamma_\infty\backslash\Gamma_0(N)}f_n(pG(\t))+\sum_{G\in\Gamma_\infty\backslash\Gamma_0(N)}f_n(G(\t))|W_{p,N}\\ 
&+\sum_{Q\in S^{(p)}}\left(\sum_{G\in\Gamma_\infty\backslash\Gamma_0(N)}f_n(pGW_{Q,N}(\t))+ \sum_{G\in\Gamma_\infty\backslash\Gamma_0(N)}f_n(GW_{Q,N}(\t))|W_{p,N}\right)
%=&F_{N,pn}(\t)+F_{\G,n}|W_{p}(\t), 
\end{align}
in a similar way to \eqref{fgpnptm}.

$\cdot$\ Case (2):  Again, $S=S^{(p)}$, we thus obtain from \eqref{fgpnptnm} that
$$F_{\G^{(p)},n}(p\t)=F_{\G,pn}(\t)+F_{\G,n}(\t)|W_{p,N},$$
which leads to the expansion formula.
For the other two cases, we need another decomposition of $\Gamma_\infty\backslash\Gamma_0(M)$ discovered in \eqref{dem}. Applying \eqref{dem} and \eqref{decg} into \eqref{fgnpt} with the fact $W_{p,N}^2=I$ (mod $\G_0(N)$), we have
\begin{align}\label{fgpnt} \nonumber
F_{\G^{(p)},n}(\t)=& \sum_{\G_\infty\gamma\in\Gamma_\infty\backslash\Gamma_0(N)}f_n(\G_\infty\alpha_\gamma\omega_M(\t))+\sum_{G\in\Gamma_\infty\backslash\Gamma_0(N)}f_n(GW_{p,N}(\t))|W_{p,N}\\ \nonumber
&+\sum_{Q\in S^{(p)}}\left(\sum_{\G_\infty\gamma\in\Gamma_\infty\backslash\Gamma_0(N)}f_n(\G_\infty\alpha_\gamma\omega_MW_{Q,M}(\t))+\sum_{G\in\Gamma_\infty\backslash\Gamma_0(N)}f_n(GW_{Q,M}W_{p,N}(\t))|W_{p,N}\right)\\ 
=& \sum_{G\in\Gamma_\infty\backslash\Gamma_0(N)}f_n(pGW_{p,N}(\t))+\sum_{G\in\Gamma_\infty\backslash\Gamma_0(N)}f_n(GW_{p,N}(\t))|W_{p,N}\\ \nonumber
&+\sum_{Q\in S^{(p)}}\left(\sum_{G\in\Gamma_\infty\backslash\Gamma_0(N)}f_n(pGW_{pQ,N}(\t))+\sum_{G\in\Gamma_\infty\backslash\Gamma_0(N)}f_n(GW_{pQ,N}(\t))|W_{p,N}\right).
\end{align}
The second equality above was obtained in a similar fashion as above and by means of the operator identity $\omega_M= V_pW_{p,N}$.

$\cdot$\ Case (1): Observe that
\begin{equation}\label{c1s}
S=S^{(p)}\cup \{p\}\cup  \{pQ\,|\,Q\in S^{(p)}\}.
\end{equation}
Thus by combining results in \eqref{fgpnptnm} and \eqref{fgpnt}, we conclude that 
\begin{equation*}
F_{\G^{(p)},n}(p\t)+F_{\G^{(p)},n}(\t)=F_{\G,np}(\t)+F_{\G,n}(\t)|W_{p,N}=F_{\G,np}(\t)+F_{\G,n}(\t),
\end{equation*}
which proves the expansion formula.

$\cdot$\ Case (3): Since $p\nmid M$ and $pQ_0\parallel N$, $Q_0\parallel N$ and $p\nmid Q_0$. Also since $W_{p,N}W_{Q_0,N}=W_{pQ_0,N}$, 
we have from \eqref{fgpnptnm} that 
\begin{align}\label{fgpnpw} \nonumber
F_{\G^{(p)},n}(p\t)|W_{Q_0,N}
=& \sum_{G\in\Gamma_\infty\backslash\Gamma_0(N)}f_n(pGW_{Q_0,N}(\t)) + \sum_{G\in\Gamma_\infty\backslash\Gamma_0(N)}f_n(G W_{pQ_0,N}(\t)) \\  
& +\sum_{Q\in S^{(p)}}\left(\sum_{G\in\Gamma_\infty\backslash\Gamma_0(N)}f_n(pGW_{Q,N}W_{Q_0,N}(\t)) + \sum_{G\in\Gamma_\infty\backslash\Gamma_0(N)}f_n(GW_{Q,M} W_{pQ_0,N}(\t))\right).
\end{align}
Moreover, \eqref{fgpnt} can be restated as follows:
\begin{align}\label{fgpnt2} \nonumber
F_{\G^{(p)},n}(\t)=& \sum_{G\in\Gamma_\infty\backslash\Gamma_0(N)}f_n(pGW_{pQ_0,N}W_{Q_0,N}(\t))+\sum_{G\in\Gamma_\infty\backslash\Gamma_0(N)}f_n(G(\t))\\ 
&+\sum_{Q\in S^{(p)}}\left(\sum_{G\in\Gamma_\infty\backslash\Gamma_0(N)}f_n(pGW_{Q,N}W_{pQ_0,N}W_{Q_0,N}(\t))+\sum_{G\in\Gamma_\infty\backslash\Gamma_0(N)}f_n(GW_{Q,N}(\t))\right).
\end{align}
Like in Case (4), we derive from \eqref{fgpnpw} and \eqref{fgpnt2} that
$$F_{\G^{(p)},n}(\t)+F_{\G^{(p)},n}(p\t)|W_{Q_0,N}=F_{\G,n}(\t)+F_{\G,pn}(\t)|W_{Q_0,N}.$$
This ends the proof of the expansion formulas.
\end{proof}%

Let us recall from Lemma 2.2 of \cite{Koike} or \cite{AL} that the following decomposition holds:
\begin{equation}\label{decomposition}
\Gamma_0(N)=\begin{cases} \displaystyle\bigcup_{j=0}^{p-1}\Gamma_0(N,p)T^j, & \hbox{ if } p|N,  \\
\displaystyle\bigcup_{j=0}^{p-1}\Gamma_0(N,p)T^j \bigcup \Gamma_0(N,p)\mathcal \omega_N, & \hbox{ if } p\nmid N. 
\end{cases}
\end{equation}
Also for $\gamma\in\Gamma_0(N)$ and $\nu_p=\sm 1&0\\0&p\esm$, one can easily obtain that
\begin{equation}\label{vp}\Gamma_\infty\gamma \nu_p= \nu_p \Gamma_\infty^{(p)}\alpha_\gamma
\end{equation}
and if $p\nmid M$, the Atkin-Lehner involution $W_{p,pM}$ can be expressed as
\begin{equation}\label{vpwp}
W_{p,pM}=\nu_p \omega_M.
\end{equation}
Moreover, we note that $V_p\nu_p=\sm p&0\\0&p\esm$, and so $V_p$ and $\nu_p$ act on functions as the inverse to each other.
Thus we refer Lemma \ref{ko} for the action of $\nu_p$.

Now we are ready to prove compression formulas for $j_{\G,n}$.
\begin{prop}[{\it{Compression formulas}}] \label{comp} Let $p$ be a prime divisor of $N$ and write $N=pM$.  Then the following hold:
\begin{itemize}
\item {\rm (Case 1)} $j_{\G,n}(\t)|{U^*_p}+j_{\G,n}(\t)=j_{\G^{(p)},n}(\t)+pj_{\G^{(p)},\frac{n}{p}}(\t)$.
\item {\rm (Case 2)} $j_{\G,n}(\t)|{U^*_p}+j_{\G,n}(\t)|W_{p,N}=pj_{\G^{(p)},\frac{n}{p}}(\t)$.
\item {\rm (Case 3)} $j_{\G,n}(\t)|{U^*_p}+j_{\G,n}(\t)|W_{p,N}=j_{\G^{(p)},n}(\t)|W_{Q_0,M}+pj_{\G^{(p)},\frac{n}{p}}(\t)$.
\item {\rm (Case 4)} $j_{\G,n}(\t)|{U^*_p}=j_{\G^{(p)},n}(\t)|W_{Q_0,M}+pj_{\G^{(p)},\frac{n}{p}}(\t)$.
\item {\rm (Case 5)} $j_{\G,n}(\t)|{U^*_p}=pj_{\G^{(p)},\frac{n}{p}}(\t)$.
\end{itemize}
\end{prop}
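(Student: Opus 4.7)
The proof is parallel to that of Proposition~\ref{TnN}: we establish each identity at the level of the Niebur--Poincar\'e series $F_{\G,n}(\t,s)$ defined in \eqref{fns} and then pass to $j_{\G,n}$ by specializing to $s=1$ via analytic continuation. Since $\nu_p T^i\t=(\t+i)/p$, the operator $U_p^*$ acts as
\begin{equation*}
F_{\G,n}|U_p^*(\t,s)=\sum_{i=0}^{p-1}\sum_{\gamma\in\G_\infty\backslash\G_0(N)}f_n(\gamma\nu_p T^i\t)+\sum_{Q\in S}\sum_{i=0}^{p-1}\sum_{\gamma\in\G_\infty\backslash\G_0(N)}f_n(\gamma W_{Q,N}\nu_p T^i\t),
\end{equation*}
and the plan is to reorganize the right-hand side as $p\,F_{\G^{(p)},n/p}(\t,s)$ plus Atkin--Lehner translates dictated by each of the five cases.

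The manipulation rests on three key ingredients. First, the matrix identity $\gamma\nu_p=\nu_p\alpha_\gamma$ for $\gamma\in\G_0(N)$, with $\alpha_\gamma\in\G_0(M)$ as in Lemma~\ref{dec}, converts each summand in the principal sum into $f_n(\alpha_\gamma T^i\t/p)$. Second, the bijection $\psi:\G_\infty\gamma\mapsto\G_\infty^{(p)}\alpha_\gamma$ of Lemma~\ref{11}, together with the fact that $T^0,\ldots,T^{p-1}$ represent $\G_\infty^{(p)}\backslash\G_\infty$, allows the double sum $\sum_\gamma\sum_i$ to be re-indexed as a sum over the appropriate coset system in $\G_\infty\backslash\G_0(M)$ provided by \eqref{dem}, \eqref{dem2}, or \eqref{dem3}. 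Third, Lemma~\ref{ko} supplies the Atkin--Lehner transport identities needed to move $\nu_p$ past $W_{Q,N}$: for $Q\in S$ with $p\nmid Q$, right-multiplying the relation in~\eqref{ko2} by $\nu_p$ and using $V_p\nu_p=pI$ yields $W_{Q,N}\nu_p=\nu_p W_{Q,M}$ as matrices, while for $Q\in S$ with $p\mid Q$ the identities \eqref{ko1}, \eqref{ko3}, \eqref{ko4} produce the analogous conjugation relations. The factor of $p$ on the right-hand side arises from the relation $f_n(\t/p)=f_{n/p}(\t)$ (valid when $p\mid n$) together with $\sum_{i=0}^{p-1}e(-ni/p)=p$; when $p\nmid n$, both sides collapse to zero, consistent with the convention $j_{\G^{(p)},n/p}\equiv 0$.

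The five cases are then handled separately, exactly as in the proof of Proposition~\ref{TnN}. In Case~5 ($p\mid M$, $S=S^{(p)}$) no Atkin--Lehner correction survives and only $p\,j_{\G^{(p)},n/p}$ remains; in Case~4, the Atkin--Lehner summand for $Q=pQ_0$ contributes $j_{\G^{(p)},n}|W_{Q_0,M}$ via \eqref{ko4}; and in Cases~1--3, where $p\nmid M$, the decomposition~\eqref{dem} supplies the extra family $\{\G_\infty\alpha_\gamma\omega_M\}$ which, via the relation $W_{p,N}=\nu_p\omega_M$ from~\eqref{vpwp}, yields either the $j_{\G,n}|W_{p,N}$ term (Cases~2, 3) or the $j_{\G,n}$ term (Case~1), depending on whether $p\in S$ or $pQ_0\in S$. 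The main obstacle is the careful bookkeeping of these coset decompositions: particularly in Cases~4 and 5, where $p\mid M$ and the decomposition \eqref{dem3} through the matrices $\delta_j$ requires identifying pairs $(\gamma,i)\in\G_\infty\backslash\G_0(N)\times\{0,\ldots,p-1\}$ with elements $\gamma'\delta_j\in\G_\infty\backslash\G_0(M)$. Once this matching is verified, each compression formula follows by summing the contributions and specializing to $s=1$.
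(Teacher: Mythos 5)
Your proposal follows essentially the same route as the paper's proof: both work at the level of $F_{\G,n}(\t,s)$ and pass to $s=1$ by analytic continuation, write $U^*_p$ as $\sum_{i}|\nu_p T^i$, reassemble the resulting double sums using $\gamma\nu_p=\nu_p\alpha_\gamma$, the bijection of Lemma \ref{11}, the coset decompositions of Lemmas \ref{dec} and \ref{dec2}, and the Atkin--Lehner transport identities of Lemma \ref{ko}, and extract the factor $p$ (or the vanishing when $p\nmid n$) from the character sum $\sum_{i=0}^{p-1}e(-ni/p)$. The one piece left implicit is the origin of the terms $j_{\G^{(p)},n}$ in Case 1 and $j_{\G^{(p)},n}|W_{Q_0,M}$ in Case 3 --- in the paper these come from the cosets $\G_0(N)W_{p,N}$ and $\G_0(N)W_{pQ_0,N}$ via $\beta_\gamma=\gamma W_{p,N}\nu_p$ together with \eqref{dem2} --- but this is precisely the case-by-case bookkeeping you identify as the remaining work, and your plan carries it out correctly.
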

\begin{proof}
Like proofs for expansion formulas, we prove the results for $F_{\G,n}(\t,s)$ and use analytic continuation. 
We first note that the $U^*_p$ operator defined in  \eqref{ups} can be written as
\begin{equation}\label{up}
f|{U^*_p}=\sum_{j=0}^{p-1}f|{\nu_pT^{j}}.
\end{equation}
We first prove the three cases when $p\nmid M$. 

$\cdot$\ Case (1): Recall that $S=S^{(p)}\cup \{p\}\cup  \{pQ\,|\,Q\in S^{(p)}\}.$  Employing \eqref{up} in \eqref{fgnpt} then,  we obtain 
\begin{align}\label{fgnupt}\nonumber
F_{\G,n}(\t)|{U^*_p}&=\sum_{j=0}^{p-1}\left(  \sum_{G\in\Gamma_\infty\backslash\Gamma_0(N)}f_n(G \nu_p T^j(\t)) +\sum_{Q\in S^{(p)}} \sum_{G\in\Gamma_\infty\backslash\Gamma_0(N)}f_n(G W_{Q,N} \nu_p T^j(\t)) \right.\\
&\qquad\left.+\sum_{G\in\Gamma_\infty\backslash\Gamma_0(N)}f_n(G W_{p,N} \nu_p T^j(\t))
+\sum_{Q\in S^{(p)}} \sum_{G\in\Gamma_\infty\backslash\Gamma_0(N)}f_n(G W_{pQ,N} \nu_p T^j(\t))\right)\\
&\qquad=:(A)+(B)+(C)+(D), \nonumber
\end{align}
where $(A),(B),(C),(D)$ denote the four summands of \eqref{fgnupt} in order.

Since $p\nmid M$, using Lemma \ref{11} and (\ref{vp}) in the first, \eqref{decomposition} in the second, \eqref{vp} in the third, and \eqref{vpwp} in the last equality, we have 
\begin{align}\label{(A)} \nonumber
(A)&=\sum_{j=0}^{p-1}\left(\sum_{D\in\Gamma_\infty^{(p)}\backslash\Gamma_0(M,p)}f_n(\nu_p DT^j(\t))\right) \\  \nonumber
&=\sum_{D\in\Gamma_\infty^{(p)}\backslash\Gamma_0(M)}f_n(\nu_p D(\t)) - \sum_{D\in\Gamma_\infty^{(p)}\backslash\Gamma_0(M,p)}f_n(\nu_p D \omega_M(\t))\\
&=\sum_{D\in\Gamma_\infty^{(p)}\backslash\Gamma_0(M)}f_n(\nu_p D(\t)) - \sum_{G\in\Gamma_\infty\backslash\Gamma_0(N)}f_n(G\nu_p\omega_M(\t))\\
&=\sum_{D\in\Gamma_\infty^{(p)}\backslash\Gamma_0(M)}f_n(\nu_p D(\t)) - \sum_{G\in\Gamma_\infty\backslash\Gamma_0(N)}f_n(G(\t))|W_{p,N}.\nonumber
\end{align}

Using Lemma \ref{11}, (\ref{vp}), and \eqref{ko1} of Lemma \ref{ko} in the first equality and applying the exactly same arguments as above for the rest, we find that
\begin{align}\label{(B)}\nonumber
(B)&=\sum_{j=0}^{p-1}\left(\sum_{Q\in S^{(p)}} \sum_{j=0}^{p-1}  \sum_{D\in\Gamma_\infty^{(p)}\backslash\Gamma_0(M,p)} f_n(\nu_p DT^jW_{Q,M}(\t))\right)\\
&=\sum_{Q\in S^{(p)}}\sum_{D\in\Gamma_\infty^{(p)}\backslash\Gamma_0(M)}f_n(\nu_p D W_{Q,M}(\t)) - \sum_{Q\in S^{(p)}}\sum_{G\in\Gamma_\infty\backslash\Gamma_0(N)}f_n(GW_{Q,N}(\t))|W_{p,N}.
\end{align}

Now utilizing $\beta_\g V_p=\g W_{p,N}$ or $\beta_\g=\g W_{p,N} \nu_p$ and then \eqref{dem2}, we obtain
\begin{align}\label{(C)}\nonumber
(C)&=\sum_{j=0}^{p-1}\sum_{\G_\infty\gamma\in\Gamma_\infty\backslash\Gamma_0(N)}f_n(\G_\infty \beta_\gamma T^j(\t))\\
&=\sum_{D\in\Gamma_\infty\backslash\Gamma_0(M)}f_n(D (\t)) - \sum_{G\in\Gamma_\infty\backslash\Gamma_0(N)}f_n(G(\t))\\ \nonumber
&=\sum_{D\in\Gamma_\infty\backslash\Gamma_0(M)}f_n(D (\t)) - \sum_{G\in\Gamma_\infty\backslash\Gamma_0(N)}f_n(GW_{p,N}(\t))|W_{p,N}. 
\end{align}

In a similar way, we find that
\begin{align}\label{(D)}\nonumber
(D)&=\sum_{Q\in S^{(p)}} \sum_{j=0}^{p-1} \sum_{\G_\infty\gamma\in\Gamma_\infty\backslash\Gamma_0(N)}f_n(\G_\infty \beta_\gamma T^j W_{Q,M}(\t))\\
&=\sum_{Q\in S^{(p)}}\sum_{D\in\Gamma_\infty\backslash\Gamma_0(M)}f_n(D W_{Q,M}(\t)) - \sum_{Q\in S^{(p)}}\sum_{G\in\Gamma_\infty\backslash\Gamma_0(N)}f_n(GW_{pQ,N}(\t))|W_{p,N}.
\end{align}

On the other hand, using $\Gamma_\infty^{(p)}\backslash\Gamma_0(M)=\{D, TD, T^2D, \dots, T^{p-1}D|\, D\in \Gamma_\infty\backslash\Gamma_0(M)\}$ and \eqref{fn} in the first and second equality below, respectively, we obtain that
\begin{align*}
\sum_{D\in\Gamma_\infty^{(p)}\backslash\Gamma_0(M)}f_n(\nu_p D(\t)) &=2\pi\sum_{j=0}^{p-1}\sum_{D\in\Gamma_\infty\backslash\Gamma_0(M)}f_n(\nu_p T^j D(\t))\\
&=\sum_{j=0}^{p-1}\sum_{D\in\Gamma_\infty\backslash\Gamma_0(M)}e\left(-\frac{jn}{p}\right)f_n(\nu_p D(\t))\\
&=\left\{ \sum_{j=0}^{p-1} e\left(-\frac{jn}{p}\right) \right\} \left\{\sum_{D\in\Gamma_\infty\backslash\Gamma_0(M)} f_n(\nu_p D(\t))\right\}.
\end{align*}

By the same reason, we have
\begin{align*}
\sum_{Q\in S^{(p)}}\sum_{D\in\Gamma_\infty^{(p)}\backslash\Gamma_0(M)}f_n(\nu_p D W_{Q,M}(\t))
=\sum_{Q\in S^{(p)}}\left\{ \sum_{j=0}^{p-1} e\left(-\frac{jn}{p}\right) \right\} \left\{\sum_{D\in\Gamma_\infty\backslash\Gamma_0(M)} f_n(\nu_p DW_{Q,M}(\t))\right\}.
\end{align*}
It thus follows from \eqref{fns} and
$$\sum_{j=0}^{p-1} e\left(-\frac{jn}{p}\right) = \begin{cases} p, & \hbox{ if } p|n\\
0, & \hbox{ if } p\nmid n \end{cases}$$
that 
\begin{equation}\label{twosum}
\sum_{D\in\Gamma_\infty^{(p)}\backslash\Gamma_0(M)}f_n(\nu_p D(\t))+\sum_{Q\in S^{(p)}}\sum_{D\in\Gamma_\infty^{(p)}\backslash\Gamma_0(M)}f_n(\nu_p D W_{Q,M}(\t))=pF_{\G^{(p)},\frac{n}{p}}(\t).
\end{equation}
Finally, from \eqref{fgnupt}, \eqref{(A)}, \eqref{(B)}, \eqref{(C)}, \eqref{(D)} and \eqref{twosum}, we establish
$$F_{\G,n}(\t)|{U^*_p}(\t)+F_{\G,n}(\t)|W_{p,N}=F_{\G^{(p)},n}(\t)+pF_{\G^{(p)},\frac{n}{p}}(\t).$$
Since $p\in S$, we have
$$F_{\G,n}(\t)|{U^*_p}+F_{\G,n}(\t)=F_{\G^{(p)},n}(\t)+pF_{\G^{(p)},\frac{n}{p}}(\t),$$
which proves the compression formula.

$\cdot$\ Case (2): Since $S=S^{(p)}$, only $(A)$ and $(B)$ in \eqref{fgnupt} can occur.  Hence we have 
$$F_{\G,n}(\t)|{U^*_p}+F_{\G,n}(\t)|W_{p,N}=pF_{\G^{(p)},\frac{n}{p}}(\t)$$
and prove the compression formula.

$\cdot$\ Case (3): We may write
\begin{equation}\label{coca3} F_{\G,n}(\t)|{U^*_p}=(A)+(B)+(C')+(D'),\end{equation}
 where $A$ and $B$ are the same as in \eqref{fgnupt} and $(C')$ and $(D')$ are given below:
 \begin{align}\label{(C')}
(C')&:=\sum_{j=0}^{p-1}\sum_{G\in\Gamma_\infty\backslash\Gamma_0(N)}f_n(G W_{pQ_0,N} \nu_p T^j(\t))
\end{align}
and 
\begin{align}\label{(D')}
(D')&:=\sum_{Q\in S^{(p)}}\sum_{j=0}^{p-1}\sum_{G\in\Gamma_\infty\backslash\Gamma_0(N)}f_n(G W_{pQ_0,N} W_{Q,N} \nu_p T^j(\t)).
\end{align}
Using \eqref{ko1} of Lemma \ref{ko} and $\beta_\g=\g W_{p,N} \nu_p$ in the first equality and \eqref{dem2}  in the second equality below, we find that
\begin{align}\label{(C'-1)}\nonumber
(C')%=\sum_{j=0}^{p-1}\sum_{G\in\Gamma_\infty\backslash\Gamma_0(N)}f_n(G W_{pQ_0,N} \nu_p T^j(\t))\\ \nonumber
&=\sum_{j=0}^{p-1}\sum_{\G_\infty\gamma\in\Gamma_\infty\backslash\Gamma_0(N)}f_n(\G_\infty\beta_\gamma T^jW_{Q_0,M}(\t))\\
&=\sum_{D\in\Gamma_\infty\backslash\Gamma_0(M)}f_n(D (\t))|W_{Q_0,M} - \sum_{G\in\Gamma_\infty\backslash\Gamma_0(N)}f_n(G W_{pQ_0,N}(\t))|W_{p,N}.
\end{align}

Similarly, we obtain that

\begin{align}\label{(D'-1)}\nonumber
(D')%&:=\sum_{Q\in S^{(p)}}\sum_{j=0}^{p-1}\sum_{G\in\Gamma_\infty\backslash\Gamma_0(N)}f_n(G W_{pQ_0,N} W_{Q,N} \nu_p T^j(\t))\\ \nonumber
&=\sum_{Q\in S^{(p)}}\sum_{j=0}^{p-1}\sum_{\G_\infty\gamma\in\Gamma_\infty\backslash\Gamma_0(N)}f_n(\G_\infty\beta_\gamma T^j W_{Q,M} W_{Q_0,M} (\t))\\
&=\sum_{Q\in S^{(p)}}\sum_{D\in\Gamma_\infty\backslash\Gamma_0(M)}f_n(D W_{Q,M} (\t))|W_{Q_0,M} - \sum_{Q\in S^{(p)}}\sum_{G\in\Gamma_\infty\backslash\Gamma_0(N)}f_n(G W_{pQ_0,N} W_{Q,N}(\t))|W_{p,N}.
\end{align}
Thus by \eqref{coca3}, \eqref{(A)}, \eqref{(B)}, \eqref{(C'-1)}, \eqref{(D'-1)} and \eqref{twosum}, we arrive at 
$$F_{\G,n}(\t)|{U^*_p}+F_{\G,n}(\t)|W_{p,N}=F_{\G^{(p)},n}(\t)|W_{Q_0,M}+pF_{\G^{(p)},\frac{n}{p}}(\t),$$
which proves the compression formula.

It remains to prove the two last cases when $p\mid M$.

$\cdot$\ Case (5): Like in Case (2), $S=S^{(p)}$, and hence we have only $(A)$ and $(B)$ in \eqref{fgnupt}. Furthermore, by \eqref{decomposition}, we have only the first summands in the right-hand side of the second equalities of \eqref{(A)} and \eqref{(B)}. Thus from \eqref{twosum}, we obtain that
$$F_{\G,n}|{U_p}(\t)=pF_{\G^{(p)},\frac{n}{p}}(\t),$$
from which we have the compression formula.

$\cdot$\ Case (4): As in Case (3), we can write
\begin{equation}\label{coca4} F_{\G,n}(\t)|{U^*_p}=(A)+(B)+(C')+(D'),\end{equation}
 where $(A)$ and $(B)$ are the same as in \eqref{fgnupt} and $(C')$ and $(D')$ are defined in \eqref{(C')} and \eqref{(D')}, respectively.
Using the first identity in \eqref{ko4} of Lemma \ref{ko}, \eqref{ko3} of Lemma \ref{ko}, and \eqref{dem3} in turn, we find that
\begin{align}\label{(C'')}\nonumber
(C')%&=\sum_{j=0}^{p-1}\sum_{G\in\Gamma_\infty\backslash\Gamma_0(N)}f_n(G W_{pQ_0,N} \nu_p T^j(\t))
&=\sum_{j=0}^{p-1}\sum_{G\in\Gamma_\infty\backslash\Gamma_0(N)}f_n(GW_{Q_0,M}T^j(\t))\\
&=\sum_{j=0}^{p-1}\sum_{G\in\Gamma_\infty\backslash\Gamma_0(N)}f_n(G\delta_jW_{Q_0,M}(\t))=\sum_{D\in\Gamma_\infty\backslash\Gamma_0(M)}f_n(D (\t))|W_{Q_0,M}.
\end{align}
Similarly, we obtain that
\begin{align}\label{(D'')}\nonumber
(D')%&=\sum_{Q\in S^{(p)}}\sum_{j=0}^{p-1}\sum_{G\in\Gamma_\infty\backslash\Gamma_0(N)}f_n(G W_{pQ_0,N} W_{Q,N} \nu_p T^j(\t))
&=\sum_{Q\in S^{(p)}}\sum_{j=0}^{p-1}\sum_{G\in\Gamma_\infty\backslash\Gamma_0(N)}f_n(G W_{Q,M} W_{Q_0,M} T^j (\t))\\
&=\sum_{Q\in S^{(p)}}\sum_{j=0}^{p-1}\sum_{G\in\Gamma_\infty\backslash\Gamma_0(N)}f_n(G \delta_j W_{Q,M} W_{Q_0,M} (\t))\\ \nonumber
&=\sum_{Q\in S^{(p)}}\sum_{D\in\Gamma_\infty\backslash\Gamma_0(M)}f_n(DW_{Q,M}(\t))|W_{Q_0,M}.
\end{align}
%
%{\color{red}Note that for each $Q\in S^{(p)}$, $p\nmid Q$. Thus applying \eqref{ko4} of Lemma \ref{ko} for $W_{pQ_0,M}W_{Q,M}$ above is justified.}
%
Employing \eqref{decomposition} in the proof of \eqref{(A)} and \eqref{(B)}, then applying the results with \eqref{(C'')}, \eqref{(D'')} and \eqref{twosum} into \eqref{coca4},  we have
$$F_{\G,n}(\t)|{U^*_p}=F_{\G^{(p)},n}(\t)|W_{Q_0,M}+pF_{\G^{(p)},\frac{n}{p}}(\t),$$
from which we prove the last compression formula.

\end{proof}
We obtain \eqref{ep-pli} for $j_{\G,n}$ from Propositions \ref{TnN} and \ref{comp} if $p\mid N$. If $p\nmid N$, the left-hand side of \eqref{ep-pli} gives $J_{\G,n}|T_p$. Hence \eqref{ep-pli} follows from the same arguments as in \cite[Lemma 5.1 (5.2)]{KK} or \cite[Theorem 1.1 (2)]{BKLOR} when $p\nmid N$.  Therefore, Theorem \ref{p-pli} is proved.

\section{Replication formula and generalized Hecke operator on $J_{\G,n}(\t)$}
Now we generalize the {\it $p$-plication formula}.
\begin{thm}[{\it{$p^k$-plication formula}}] \label{Hecke} For any prime $p$ and positive integer $k$, we have
\begin{equation}\label{Hecke1}
J_{\G,n}|T(p^k)(\t)=\sum_{p^j|(n,p^k)} p^j J_{\G,\frac{p^kn}{p^{2j}}}^{(p^j)}(\t).
\end{equation}
\end{thm}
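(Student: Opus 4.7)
The proof will go by induction on $k$. The base case $k=0$ is trivial since $T(1)$ is the identity operator, so both sides of \eqref{Hecke1} reduce to $J_{\G,n}(\t)$. The case $k=1$ follows at once from the definition of $T(p)$ combined with the $p$-plication formula in Theorem \ref{p-pli}.

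The key structural step for the inductive step is the decomposition
$$J_{\G,n}|T(p^{k+1})(\t)=J_{\G,n}|U^*_{p^{k+1}}(\t)+J_{\G^{(p)},n}|T(p^k)(p\t),$$
obtained from the definition \eqref{deft} by isolating the $i=0$ term and reindexing the rest via $i=i'+1$, using the identity $J_{\G,n}^{(p^{i'+1})}=(J_{\G^{(p)},n})^{(p^{i'})}$ which expresses iterated replication as replication from the group $\G^{(p)}$. The inductive hypothesis, applied to $T(p^k)$ on $\G^{(p)}$, evaluates the second summand as
$$J_{\G^{(p)},n}|T(p^k)(p\t)=\sum_{p^j\mid (n,p^k)} p^j J_{\G,p^kn/p^{2j}}^{(p^{j+1})}(p\t).$$

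It remains to evaluate $J_{\G,n}|U^*_{p^{k+1}}(\t)$, which I will handle by a parallel induction establishing the auxiliary identity
$$J_{\G,n}|U^*_{p^{k+1}}(\t)=\sum_{p^j\mid (n,p^{k+1})} p^j J_{\G,p^{k+1}n/p^{2j}}^{(p^j)}(\t)\;-\;\sum_{p^j\mid (n,p^k)} p^j J_{\G,p^k n/p^{2j}}^{(p^{j+1})}(p\t).$$
This follows from $(U^*_p)^{k+1}=U^*_{p^{k+1}}$, the $p$-plication formula in the rewritten form $J_{\G,m}|U^*_p=J_{\G,pm}+pJ_{\G,m/p}^{(p)}-J_{\G,m}^{(p)}(p\t)$ applied term by term, and the elementary computation $[f(p\t)]|U^*_p(\t)=pf(\t)$ which holds because $f$ has period $1$. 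Summing the auxiliary identity with the evaluated $J_{\G^{(p)},n}|T(p^k)(p\t)$ produces perfect cancellation of the two families of terms evaluated at $p\t$, leaving precisely $\sum_{p^j\mid (n,p^{k+1})} p^j J_{\G,p^{k+1}n/p^{2j}}^{(p^j)}(\t)$, as required.

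The principal obstacle is the careful bookkeeping of summation indices and the replicate levels $\G^{(p^j)}$ as $U^*_p$ is iterated. In particular, one must verify that the $p$-plication formula applied to $J_{\G,m}^{(p^j)}$ correctly promotes to $J_{\G,\cdot}^{(p^{j+1})}$-type terms with the proper signs, and that the edge behavior at small $v_p(n)$ is absorbed by the convention $J_{\G,n/p}=0$ whenever $p\nmid n$, so that the telescoping between the auxiliary formula and the inductive expression for $J_{\G^{(p)},n}|T(p^k)(p\t)$ is clean and no spurious boundary terms remain.
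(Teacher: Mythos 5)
Your proof is correct, but it is organized differently from the paper's. The paper also inducts on $k$, but its inductive step is a ``compute one quantity two ways'' argument: it evaluates $\bigl(J_{\G,n}|T(p^k)\bigr)|U^*_p(\t)+\bigl(J_{\G,n}^{(p)}|T(p^k)\bigr)(p\t)$ once via the definition \eqref{deft} (obtaining $J_{\G,n}|T(p^{k+1})+pJ_{\G,n}^{(p)}|T(p^{k-1})$, which is precisely the Hecke relation later recorded as Proposition \ref{Hecke2}(2)) and once via the induction hypothesis together with the $p$-plication formula (obtaining $\sum_{p^j|(n,p^{k+1})}p^jJ_{\G,p^{k+1}n/p^{2j}}^{(p^j)}+pJ_{\G,n}^{(p)}|T(p^{k-1})$), then cancels the common term. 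You instead peel off the $i=0$ summand of \eqref{deft} to write $T(p^{k+1})$ as $U^*_{p^{k+1}}$ plus $T(p^k)$ on $\G^{(p)}$ evaluated at $p\t$, and run a nested induction to get a closed form for $J_{\G,n}|U^*_{p^{k+1}}$. I checked your auxiliary identity: its base case is the rearranged $p$-plication formula, and in its inductive step the two families $\sum p^{j+1}J_{\G,p^{k}n/p^{2j+1}}^{(p^{j+1})}$ and $\sum p^{j+1}J_{\G,p^{k-1}n/p^{2j}}^{(p^{j+1})}$ cancel except for exactly the top term $p^{k+1}J_{\G,n/p^{k+1}}^{(p^{k+1})}$ needed to extend the range of the main sum from $(n,p^k)$ to $(n,p^{k+1})$, with the convention $J_{\G,n/p}=0$ for $p\nmid n$ handling the edge cases as you say; the identities $[f(p\t)]|U^*_p=pf$ and $(U^*_p)^{k+1}=U^*_{p^{k+1}}$ are both valid here. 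The trade-off: the paper's route delivers the commutation relation $T(p^k)\circ T(p)=T(p^{k+1})+pI_p\circ T(p^{k-1})$ for free (it is reused in Section 4), whereas yours requires the extra nested induction but yields as a by-product an explicit formula for $J_{\G,n}|U^*_{p^r}$ and avoids introducing the auxiliary two-sided quantity. If you adopt your route, Proposition \ref{Hecke2}(2) would need its own short argument, since the paper currently proves it by pointing at equation \eqref{k11}.
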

\begin{proof}
We use induction on $k$. When $k=1$, \eqref{Hecke1} equals \eqref{ep-pli}, the {\it $p$-plication formula}.
We assume that \eqref{Hecke1} is true for $k$ and compute the equation (\ref{k1}) below in two ways:
\begin{equation}\label{k1}\left(J_{\G,n}|T(p^k)\right)|{U^*_p}(\t)+\left(J_{\G,n}^{(p)}|T(p^k)\right)(p\t).
\end{equation}
First by the definition of Hecke operator \eqref{deft}, \eqref{k1} equals
\begin{equation}\left(\sum_{i=0}^{k} J_{\G,n}^{(p^i)}|{U^*_{p^{k-i}}}(p^i\t)\right)|{U^*_p} + \sum_{i=0}^{k} J_{\G,n}^{(p^{i+1})}|{U^*_{p^{k-i}}}(p^{i+1}\t).
\end{equation}
Applying the fact $V_pU^*_p=p$ here, we find that this equals
\begin{align}
&J_{\G,n}|{U^*_{p^{k+1}}}(\t)+p\sum_{i=1}^{k} J_{\G,n}^{(p^i)}|{U^*_{p^{k-i}}}(p^{i-1}\t)+\sum_{i=0}^{k} J_{\G,n}^{(p^{i+1})}|{U^*_{p^{k-i}}}(p^{i+1}\t)\\ \notag
&=J_{\G,n}|{U^*_{p^{k+1}}}(\t)+\sum_{i=1}^{k+1} J_{\G,n}^{(p^{i})}|{U^*_{p^{k+1-i}}}(p^{i}\t)+p\sum_{i=0}^{k-1} (J_{\G,n}^{(p)})^{(p^i)}|{U^*_{p^{k-1-i}}}(p^{i}\t).
\end{align}
By \eqref{deft}, we eventually have
\begin{equation}\label{k11}\left(J_{\G,n}|T(p^k)\right)|{U^*_p}(\t)+\left(J_{\G,n}^{(p)}|T(p^k)\right)(p\t)=J_{\G,n}|T(p^{k+1})(\t)+pJ_{\G,n}^{(p)}|T(p^{k-1})(\t).\end{equation}

On the other hand, by the induction hypothesis, \eqref{k1} equals
\begin{equation}\label{operator2}
\sum_{p^j|(n,p^k)} p^j J_{\G,\frac{p^kn}{p^{2j}}}^{(p^j)}|{U^*_p}(\t) + \sum_{p^j|(n,p^k)} p^{j} J_{\G,\frac{p^kn}{p^{2j}}}^{(p^{j+1})}(p\t)
=\sum_{p^j|(n,p^k)} p^j\left( J_{\G,\frac{p^kn}{p^{2j}}}^{(p^j)}|{U^*_p}(\t) + J_{\G,\frac{p^kn}{p^{2j}}}^{(p^{j})(p)}(p\t)\right).
\end{equation}
Using {\it{$p$-plication formula}} \eqref{ep-pli}, we find that the right hand side of the equation \eqref{operator2} equals
\begin{equation}\label{k12}
\sum_{p^j|(n,p^k)} p^j\left( J_{\G,\frac{p^{k+1}n}{p^{2j}}}^{(p^j)}(\t) + pJ_{\G,\frac{p^kn}{p^{2j+1}}}^{(p^{j+1})}(\t)\right).
\end{equation}
Since this equals 
$$\sum_{p^j|(n,p^{k+1})} p^j J_{\G,\frac{p^{k+1}n}{p^{2j}}}^{(p^j)}(\t) - p^{k+1} J_{\G,\frac{p^{k+1}n}{p^{2k+2}}}^{(p^{k+1})}(\t) + \displaystyle\sum_{p^j|(n,p^{k-1})} p^{j+1}J_{\G,\frac{p^{k-1}n}{p^{2j}}}^{(p^{j+1})}(\t) +  p^{k+1} J_{\G,\frac{p^kn}{p^{2k+1}}}^{(p^{k+1})}(\t)$$
if  $p^{k+1}|n$ and equals
$$\sum_{p^j|(n,p^{k+1})} p^j J_{\G,\frac{p^{k+1}n}{p^{2j}}}^{(p^j)}(\t) + \displaystyle\sum_{p^j|(n,p^{k-1})}p^{j+1}J_{\G,\frac{p^{k-1}n}{p^{2j}}}^{(p^{j+1})}(\t)$$
otherwise,
we see that \eqref{k12} equals
$$\sum_{p^j|(n,p^{k+1})} p^j J_{\G,\frac{p^{k+1}n}{p^{2j}}}^{(p^j)}(\t) + p\sum_{p^j|(n,p^{k-1})}p^{j}J_{\G,\frac{p^{k-1}n}{p^{2j}}}^{(p)(p^{j})}(\t).$$
Hence by the induction hypothesis again, we finally have
\begin{equation}\label{k13}
\left(J_{\G,n}|T(p^k)\right)|{U^*_p}(\t)+\left(J_{\G,n}^{(p)}|T(p^k)\right)(p\t)=\sum_{p^j|(n,p^{k+1})} p^j J_{\G,\frac{p^{k+1}n}{p^{2j}}}^{(p^j)}(z) + p J_{\G,n}^{(p)}|T(p^{k-1})(\t).
\end{equation}
Comparing \eqref{k11} with \eqref{k13}, we obtain \eqref{Hecke1} for $k+1$. This completes the proof.
\end{proof}

For a prime factorization $m=p_1p_2\cdots p_r$ of any given positive integer $m$, the $m$-plicate of $J_{\G,n}$ is given by \eqref{defp} as 
$$J_{\G,n}^{(m)}=J_{\G,n}^{(p_1)(p_2)\cdots(p_r)}.$$
One can easily check that $J_{\G,n}^{(m)}$ is independent of a prime factorization of $m$.

\begin{prop}\label{Hecke2} We have, for any distinct primes $p$ and $\ell$ and positive integers $k$ and $r$,
\begin{enumerate}
\item[(1)] $T(p^k)\circ T(\ell^r)=T(\ell^r)\circ T(p^k)$
\item[(2)] $T(p^k)\circ T(p)=T(p^{k+1}) + pI_p\circ T(p^{k-1})$, where $J_{\G,n}|I_p = J_{\G,n}^{(p)}$.
\item[(3)] $I_p\circ I_\ell = I_\ell \circ I_p$.
\end{enumerate}
\end{prop}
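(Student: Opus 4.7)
The plan is to reduce all three identities to the closed form of the iterated Hecke operator established in Theorem \ref{Hecke}, namely
\[
J_{\G,n}|T(p^k)(\t)=\sum_{p^j\mid (n,p^k)} p^j J_{\G,p^kn/p^{2j}}^{(p^j)}(\t),
\]
and then compare coefficients of the basis elements $J_{\G',m}$ on both sides. Throughout, I treat each side as a linear combination of $J_{\G',m}$'s for various subgroups $\G'$ obtained from $\G$ by replication, so that the linear extension of $T(\ell^r)$ acts term-by-term on each summand according to \eqref{Hecke1} applied at the corresponding group.

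First I would dispose of (3). By the very definitions \eqref{gns} and \eqref{gp}, we have $I_pI_\ell J_{\G,n}=J_{\G^{(p)(\ell)},n}$. A direct check from \eqref{gns}, separating the cases based on whether $p,\ell$ divide $N$, shows that $N^{(p)(\ell)}=N^{(\ell)(p)}$ and that the two sets $S^{(p)(\ell)}$ and $S^{(\ell)(p)}$ agree (both consist of those $Q\in S$ that divide $N/\bigl((p,N)(\ell,N)\bigr)$, using $p\neq \ell$). Hence $\G^{(p)(\ell)}=\G^{(\ell)(p)}$, which gives (3). This also makes the notation $\G^{(p^j)(\ell^i)}$ unambiguous and will be used in (1).

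Next I would prove (1) by expanding both compositions via \eqref{Hecke1}. Applying $T(p^k)$ first and then $T(\ell^r)$ at the intermediate group $\G^{(p^j)}$ gives
\[
J_{\G,n}|T(p^k)|T(\ell^r)=\sum_{p^j\mid(n,p^k)}\sum_{\ell^i\mid(p^kn/p^{2j},\ell^r)} p^j\ell^i J_{(\G^{(p^j)})^{(\ell^i)},\,p^k\ell^r n/(p^{2j}\ell^{2i})}.
\]
Since $\gcd(p,\ell)=1$, the condition $\ell^i\mid p^kn/p^{2j}$ collapses to $\ell^i\mid n$, so the outer and inner index sets decouple into $\{p^j\mid(n,p^k)\}\times\{\ell^i\mid(n,\ell^r)\}$. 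The analogous computation of $J_{\G,n}|T(\ell^r)|T(p^k)$ produces the same double sum, after using (3) to identify $(\G^{(p^j)})^{(\ell^i)}=(\G^{(\ell^i)})^{(p^j)}$. This gives (1).

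The only real work is (2), where the two prime powers are the same. Writing out $J_{\G,n}|T(p)|T(p^k)$ by applying \eqref{Hecke1} twice yields
\[
\sum_{i\in\{0,1\},\, p^i\mid n}\;\sum_{p^j\mid (pn/p^{2i},p^k)} p^{i+j}\,J_{\G^{(p^{i+j})},\,p^{k+1}n/p^{2(i+j)}}(\t),
\]
while
\[
J_{\G,n}|T(p^{k+1})+pJ_{\G,n}^{(p)}|T(p^{k-1})
=\sum_{p^a\mid(n,p^{k+1})} p^a J_{\G^{(p^a)},\,p^{k+1}n/p^{2a}}+p\sum_{p^b\mid(n,p^{k-1})} p^b J_{\G^{(p^{b+1})},\,p^{k+1}n/p^{2(b+1)}}.
\]
Setting $a=i+j$ on the left and $a=b+1$ in the second sum on the right, I would show that for each fixed $a\geq 0$ the total coefficient of $J_{\G^{(p^a)},p^{k+1}n/p^{2a}}$ agrees on both sides. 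With $v=v_p(n)$, the main case analysis splits into the ranges $1\leq a\leq\min(v,k)$ (both sides equal $2p^a$, coming from $(i,j)=(0,a)$ and $(1,a-1)$ on the left, and the two sums on the right), $a=v+1\leq k$ and $a=k+1\leq v$ (both sides contribute exactly $p^a$ from one of the two terms), and the boundary case $a=0$. This bookkeeping is the step most likely to go wrong; I would present it as a single coefficient-matching table rather than chasing the double sums symbolically, and invoke (3) each time to identify the various replicates $\G^{(p^i)(p^j)}$ with $\G^{(p^{i+j})}$. Together with (1) and (3), this completes the proof of Proposition \ref{Hecke2}.
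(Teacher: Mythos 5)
Your proposal is correct, and for parts (1) and (3) it is essentially the paper's own argument: (1) is proved by expanding both compositions via \eqref{Hecke1}, decoupling the index sets using $\gcd(p,\ell)=1$, and invoking $J_{\G,n}^{(p^j)(\ell^i)}=J_{\G,n}^{(\ell^i)(p^j)}$, while (3) is the observation that $\G^{(p)(\ell)}=\G^{(\ell)(p)}$ follows directly from the definition \eqref{gns}. The only divergence is part (2): the paper does not recompute anything but simply points to the identity \eqref{k11}, which was already established in the course of proving Theorem \ref{Hecke} directly from the definition \eqref{deft} together with $V_pU^*_p=p$; you instead re-derive (2) from the closed form \eqref{Hecke1} by matching, for each $a$, the coefficient of $J_{\G^{(p^a)},p^{k+1}n/p^{2a}}$ on both sides. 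Your route is logically sound (Theorem \ref{Hecke} is available as a finished statement, so there is no circularity) and your case analysis in terms of $v=v_p(n)$ is complete --- indeed the two sides reduce to the same pair of indicator conditions $a\le\min(v+1,k)$ and $a\le\min(v,k+1)$, just swapped --- but it is longer than necessary: the paper's shortcut via \eqref{k11} buys a one-line proof of (2), at the cost of having to notice that the quantity \eqref{k1} computed during the induction is exactly $J_{\G,n}|T(p^k)|T(p)$.
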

\begin{proof} (2) is already proved in \eqref{k11}. (3) is also true because the definition of {\it{m-plication}} of $J_{\G,n}$ is independent of any prime decompositions of $m$. For (1), we use Theorem \ref{Hecke} so that we obtain
\begin{align*}
J_{\G,n}|T(p^k)|T(\ell^r)(\t)&=\sum_{p^j|(n,p^k)} p^jJ_{\G,\frac{p^{k}n}{p^{2j}}}^{(p^j)}|T(\ell^r)(\t)\\
&=\sum_{p^j|(n,p^k)} p^j \sum_{\ell^i|(p^{k-2j}n,\ell^r)}\ell^iJ_{\G,\frac{p^{k}\ell^{r}n}{p^{2j}\ell^{2i}}}^{(p^j)(\ell^i)}(\t)\\
&=\sum_{p^j\ell^i|(n,p^k\ell^r)} p^j\ell^i J_{\G,\frac{p^{k}\ell^{r}n}{p^{2j}\ell^{2i}}}^{(p^j)(\ell^i)}(\t).
\end{align*}
Since $J_{\G,n}^{(p^j)(\ell^i)}=J_{\G,n}^{(p^j\ell^i)}=J_{\G,n}^{(\ell^i)(p^j)}$, this is equal to $J_{\G,n}|T(\ell^r)|T(p^k)(\t)$, which proves (1).
\end{proof}
By Proposition \ref{Hecke2} (1), we may define the Hecke operator $T(m)$ for any positive integer $m$ by

\begin{equation}\label{tm}T(m):=T(p^{k_1})\circ T(p^{k_2})\circ \cdots T(p^{k_r}),\end{equation}
where $m=p^{k_1}p^{k_2}\cdots p^{k_r}$ and $p_1,p_2,\dots,p_r$ are distinct primes.

Now we are ready to prove the {\it{replication formula}} for $J_{\G,n}$.
\begin{proof}[Proof of Theorem \ref{replic}] We use induction on the number of prime divisors of $m$.
First, let $m=p^k$. Then \eqref{Hecke3-1} follows from Theorem \ref{Hecke} and \eqref{Hecke3-2} follows from the definition of $T(p^k)$.
 Assume that both \eqref{Hecke3-1} and \eqref{Hecke3-2} hold for any integer that has less prime divisors than $m$ does. Now let $p^k\| m$, then $T(m)=T(\frac{m}{p^k})\circ T(p^k)$.
By the induction hypothesis, we have
\begin{align}\label{tm1}
J_{\G,n}|T(m)(\t)&=\left(\sum_{d|(n,\frac{m}{p^k})}dJ_{\G,\frac{mn}{p^kd^2}}^{(d)}\right)|T(p^k)(\t)\cr
&=\sum_{d|(n,\frac{m}{p^k})}\sum_{d'|(\frac{mn}{p^kd^2},p^k)}dd'J_{\G,\frac{mn}{(dd')^2}}^{(d)(d')}(\t).
\end{align}
Since $(\frac{m}{p^k},p^{k})=1$, we have $(\frac{mn}{p^kd^2},p^k)=(n,p^k)$. Hence the far right hand side of \eqref{tm1} equals
$$\sum_{d|(n,m)}dJ_{\G,\frac{mn}{d^2}}^{(d)}(\t).$$
Thus we have \eqref{Hecke3-1}.

By the induction hypothesis again, we have
\begin{align}\label{tm2}
J_{\G,n}|T(m)(\t)&=\left(\sum_{d|\frac{m}{p^k}}J_{\G,n}^{(d)}|U^*_{\frac{m}{p^{k}d}}(dz)\right)|T(p^k)\cr
&=\sum_{d|\frac{m}{p^k}}\sum_{p^i|p^k}J_{\G,n}^{(dp^i)}|U^*_{\frac{m}{p^{k}d}}|V_d|U^*_{p^{k-i}}|V_{p^i}(\t).
\end{align}
As we have $V_dU^*_{p^{k-i}}=U^*_{p^{k-i}}V_d$ because of $(d,p^{k-i})=1$, we find that the far right hand side of \eqref{tm2} equals
$$\sum_{d|\frac{m}{p^k},p^i|p^k}J_{\G,n}^{(dp^i)}|U^*_{\frac{m}{p^{i}d}}(dp^i\t),$$
which proves \eqref{Hecke3-2}.
\end{proof}

\section{Duality between Modular forms on $X(\G)$ of higher genus}
Suppose the genus $g\geq 1$ and $i\infty$ is not a Weierstrass point on $X(\G)$. By Weierstrass gap theorem, there are two generators $X_\G$ and $Y_\G$ of $\C(X(\G))$ which have order $g+1$ and $g+2$ at $i\infty$, respectively.
Yang \cite[\S 4.1]{Yang} found generators of the function field $\mathbb{C}(X_0(N))$ for many values of $N$.
For example, two generators of $\C(X_0(11))$ on the genus 1 curve $X_0(11)$ are given by 
$$X_{11}= q^{-2}+2q^{-1}+4+5q+8q^2+q^3+7q^4-11q^5+\cdots$$ and 
$$Y_{11}= q^{-3}+3q^{-2}+7q^{-1}+12+17q+26q^2+19q^3+37q^4-15q^5+\cdots.$$
Moreover, two generators of the function field $\C(X_0(22))$ on the genus 2 curve $X_0(22)$ are given by 
$$X_{22}= q^{-3}+q^{-1}+4+2q+2q^2+2q^5+\cdots$$ and 
$$Y_{22}= q^{-4}-q^{-3}+2q^{-2}+q^{-1}+3-2q+q^2+\cdots.$$

By adopting his method, one can find two generators of the function field $\mathbb{C}(X(\G))$ with rational coefficients and construct the reduced row echelon basis $\{f_{\G,m}(\t):m=0\ \mathrm{or}\ m>g\}$ of $M_0^{!,\i}(\G)$ with  them. 
As described in Introduction, using $\{f_{\G,m}\}$ and the basis given in \eqref{mgf} of $S_2(\G)$, one can construct a basis $\{h_{\G,n}(\t):n\geq-g\}$ of $S_2^{!,\infty}(\G)$ in the form of \eqref{gF}. We prove that  $\{f_{\G,m}(\t)\}$ and $\{h_{\G,n}(\t)\}$ satisfy the Zagier duality.
\begin{proof}[Proof of Theorem \ref{Tgrid}]
For each cusp $s$, we take $\gamma=\left(\begin{smallmatrix}
  a&b\\c &d
  \end{smallmatrix}\right) \in SL_{2}(\Bbb{Z})$ such that $\gamma i\infty =
  s$.
Then there exists a unique positive real number $h_{s}$, so called {\it width} of the cusp $s$ such that
$$
\gamma^{-1}\G_{s}\gamma=\{\pm
\left(\begin{smallmatrix}
  1&h_{s}\\0 &1
  \end{smallmatrix}\right)^{m} | m\in \Bbb{Z}\},
$$
where $\G_{\t}$ is the stabilizer of $\t$ in $\G$.
For example, we have that $h_{i\infty}=1$ and $h_0=N$. 
Let $G=f_{\G,m}\cdot h_{\G,n}$ for any $m\geq g+1$ and $n\geq -g$.
The meromorphic weight two
modular form $G(\t)$ for
 $\G$ has a Fourier
expansion at each cusp $s$ in the form
$$
(G|_2\gamma)(\t)=\sum_{n\geq n_{0}^{(s)}}a_{n}^{(s)}q_{h_{s}}^{n}.
$$
Here $q_\ell=e^{2\pi i \t/\ell}$ and $|_{k}$ is the usual weight $k$ slash operator. Then for the corresponding differential
 $\omega_{G}=G(\t)d\t$ on $X(\G)$,
 using the canonical quotient map $\pi : \H\cup\Q\cup\{i\i\} \rightarrow X(\G)$,
  the Residue Theorem on compact Riemann surfaces states that
 $$\sum_{p\in X(\G)}\text{Res}_p \omega_{G} =0.$$
Let $1/e_{\tau}$ be the cardinality of $
\G_{\tau}/\{\pm1\}$ for each $\tau \in \H$. 
 Then we see that for each cusp $s$ and $\tau \in \H$,
$$
\text{Res}_{\pi(s)}\omega_{G}=\frac{h_{s}}{2 \pi i }a_{0}^{(s)}
\quad  \mathrm{and}\quad \hbox{Res}_{\pi(\tau)}\omega_{G}=e_{\tau}\hbox{Res}_{\tau}G(\t)
.
$$
(See \cite{Choi} for the computation of residues in detail.)  Since $G(\t)$ is holomorphic in $\H$,
 it follows from the Residue Theorem that 
\begin{equation} \label{RT}
\sum_{s}h_{s} a_0^{(s)}  =0.
\end{equation}
Obviously,  $a_0^{(i\infty)}=b_{\G}(n,m)+a_{\G}(m,n)$ and $a_0^{(s)}=0$ for each cusp $s$ other than $i\infty$.
Now the assertion immediately follows from \eqref{RT}.
\end{proof}

\section{Hecke action on $M_0^{!,\i}(\G)$}
Now we are ready to investigate the action of the generalized Hecke operator on $f_{\G,m}$. 
Theorem \ref{tpm} is an immediate consequence of Theorem \ref{fpm-hecke}. %

\begin{thm}\label{fpm-hecke}
Let $\G=N+S$ and $p\nmid N$ be a prime.  Assume that $i\infty$ is not a Weierstrass point on $X(\G)$. Then for any positive integers $r$ and $m\geq g+1$, we have
\begin{equation}\label{fpm-hecke1}
f_{\G,m}|T(p^r)(\t)=\sum_{p^j|(m,p^r)} p^j f_{\G,\frac{p^rm}{p^{2j}}}(\t)+\sum_{l=1}^g \sum_{p^j|(l,p^r)} p^j a_{\G}(m,-l) f_{\G,\frac{l p^r}{p^{2j}}}(\t).
\end{equation}
\end{thm}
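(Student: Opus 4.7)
The plan is to expand $f_{\G,m}|T(p^r)$ via \eqref{fjg} and \eqref{heckedef}, exploit the hypothesis $p\nmid N$ to simplify the replication formula \eqref{Hecke3-1}, and then identify the result with the right-hand side of \eqref{fpm-hecke1} by matching coefficients in the reduced row echelon basis of $M_0^{!,\infty}(\G)$.

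First I would observe that $p\nmid N$ forces $(p,N)=1$, so $N^{(p)}=N$ and $S^{(p)}=S$ by \eqref{gns}. Iterating gives $\G^{(p^j)}=\G$ and hence $J^{(p^j)}_{\G,n}=J_{\G,n}$ for every $j\ge 0$. Specializing \eqref{Hecke3-1} therefore collapses to
\begin{equation*}
J_{\G,n}|T(p^r)(\t)=\sum_{p^j\mid (n,p^r)} p^{j}J_{\G,p^rn/p^{2j}}(\t).
\end{equation*}
Combining this with $f_{\G,m}=J_{\G,m}+\sum_{l=1}^g a_\G(m,-l)J_{\G,l}$ from \eqref{fjg} and the linearity in \eqref{heckedef} yields
\begin{equation*}
f_{\G,m}|T(p^r)=\sum_{p^j\mid(m,p^r)}p^{j}J_{\G,p^r m/p^{2j}}+\sum_{l=1}^{g}a_{\G}(m,-l)\sum_{p^j\mid(l,p^r)}p^{j}J_{\G,p^r l/p^{2j}}.
\end{equation*}

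Next, by Theorem \ref{tpm}, $f_{\G,m}|T(p^r)\in M_0^{!,\infty}(\G)$; the right-hand side of \eqref{fpm-hecke1} also lies in $M_0^{!,\infty}(\G)$ as a linear combination of basis elements. Because $\{f_{\G,k}:k=0\text{ or }k\ge g+1\}$ is a reduced row echelon basis with $f_{\G,k}$ having leading term $q^{-k}$ (and $f_{\G,k'}=0$ for $1\le k'\le g$), any element of $M_0^{!,\infty}(\G)$ is uniquely reconstructed from its constant term together with the coefficients of $q^{-k}$ for $k\ge g+1$. Both invariants match on the two sides of the asserted identity: the constant terms both vanish (every $J_{\G,n}$ with $n\ge 1$ has zero constant term in its holomorphic part by \eqref{jnnfour} and the definition $J_{\G,n}=j_{\G,n}-c_\G(n,0)$, and every $f_{\G,k}$ with $k\ge 1$ has constant term $0$ by construction), while for $k\ge g+1$ both sides produce the same coefficient
\begin{equation*}
\sum_{\substack{p^j\mid(m,p^r)\\ p^r m/p^{2j}=k}} p^{j}+\sum_{l=1}^g a_\G(m,-l)\sum_{\substack{p^j\mid(l,p^r)\\ p^r l/p^{2j}=k}} p^{j},
\end{equation*}
because each $J_{\G,n}$ on the LHS contributes the leading pole $q^{-n}$ with coefficient $1$, and each $f_{\G,k'}$ with $k'\ge g+1$ on the RHS contributes the leading pole $q^{-k'}$ with coefficient $1$.

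The subtlest point is the apparent mismatch in the gap-range coefficients $q^{-l'}$ with $1\le l'\le g$: in the $J$-expansion of the LHS these are generated directly by indices that happen to fall in $\{1,\dots,g\}$, whereas in the $f$-expansion of the RHS they arise indirectly through the gap-filling coefficients $a_\G(k',-l')$ attached to $f_{\G,k'}$ with $k'\ge g+1$. No separate verification at these gap indices is required, since the reduced row echelon structure of the basis already forces these coefficients to be determined by the ones at $q^{-k}$, $k\ge g+1$, together with the constant term, and we have just matched both.
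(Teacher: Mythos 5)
Your reduction of the problem is sound: with $p\nmid N$ all replicates are trivial, so \eqref{Hecke3-1} and \eqref{heckedef} give
\begin{equation*}
f_{\G,m}|T(p^r)=\sum_{p^j\mid(m,p^r)}p^{j}J_{\G,p^r m/p^{2j}}+\sum_{l=1}^{g}a_{\G}(m,-l)\sum_{p^j\mid(l,p^r)}p^{j}J_{\G,p^r l/p^{2j}},
\end{equation*}
and your observation that an element of $M_0^{!,\infty}(\G)$ is determined by its constant term and its $q^{-k}$ coefficients for $k\geq g+1$ is correct. But the step on which your entire argument rests --- that $f_{\G,m}|T(p^r)$ lies in $M_0^{!,\infty}(\G)$ --- is obtained by citing Theorem \ref{tpm}, and in this paper Theorem \ref{tpm} is stated to be ``an immediate consequence of Theorem \ref{fpm-hecke}.'' As written, your proof is therefore circular. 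The point is not cosmetic: the displayed combination of $J_{\G,n}$'s is a priori only a harmonic Maass function, and when $g>0$ the functions $J_{\G,1},\dots,J_{\G,g}$ have nontrivial, linearly independent shadows in $S_2(\G)$; showing that the residual contribution $\sum_{i=1}^g A_i J_{\G,i}$ vanishes (equivalently, that the nonholomorphic parts cancel) is exactly the substance of the theorem, and it cannot be assumed.

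The gap is repairable without the paper's argument: for $p\nmid N$ one has $f|T(p^r)=\sum_{i=0}^{r}f|U^*_{p^{r-i}}(p^i\t)$, which is (up to the factor $p^r$) the classical Hecke operator $T_{p^r}$ in weight $0$; since $(p,N)=1$ this operator commutes with $\G_0(N)$ and with the involutions $W_{Q,N}$, preserves weak holomorphy, and preserves vanishing/holomorphy at the cusps other than $i\infty$, so it preserves $M_0^{!,\infty}(\G)$ by standard Atkin--Lehner theory. If you insert such an independent justification, your uniqueness argument in the reduced row echelon basis closes the proof. The paper instead works entirely on the level of the $J_{\G,n}$-expansion: it isolates the coefficients $A_i$ of $J_{\G,i}$ for $1\leq i\leq g$ and proves $A_i=0$ by converting them, via the duality $a_\G(m,-i)=-b_\G(-i,m)$ of Theorem \ref{Tgrid}, into an identity between two expansions of $h_{\G,-i}|T_{p^r}$ in $S_2(\G)$. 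Your route is shorter once the preservation statement is secured, but you must secure it first.
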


\begin{proof} Recall from \eqref{fjg} that 
\begin{equation*}f_{\G,m}=J_{\G,m}+\sum_{l=1}^{g} a_{\G}(m,-l)J_{\G,l}.\end{equation*}
 Hence by \eqref{Hecke3-1} and \eqref{heckedef}, we have
  \begin{align}\label{a2}\nonumber
f_{\G,m}|T(p^r)(\t)&=\sum_{p^j|(m,p^r)} p^j J_{\G,\frac{p^rm}{p^{2j}}}^{(p^j)}(\t)+\sum_{l=1}^{g} a_{\G}(m,-l)\sum_{p^j|(l,p^r)}p^jJ_{\G,\frac{p^rl}{p^{2j}}}^{(p^j)}(\t)\\ 
&=\sum_{p^j|(m,p^r)} p^j f_{\G,\frac{p^rm}{p^{2j}}}(\t)+\sum_{l=1}^{g}\sum_{p^j|(l,p^r)}p^j a_{\G}(m,-l)f_{\G,\frac{p^rl}{p^{2j}}}(\t)\\ \nonumber
&\quad-\sum_{i=1}^{g}\left(\sum_{p^j|(m,p^r)} p^j a_{\G}(\frac{p^rm}{p^{2j}},-i)+\sum_{l=1}^{g}\sum_{p^j|(l,p^r)}p^j a_{\G}(m,-l)a_{\G}(\frac{p^rl}{p^{2j}},-i)\right)J_{\G,i}(\t).
\end{align}
For each $i$, if we let
\begin{align*}
A_i&=\sum_{p^j|(m,p^r)} p^j a_{\G}(\frac{p^rm}{p^{2j}},-i)+\sum_{l=1}^{g}\sum_{p^j|(l,p^r)}p^j a_{\G}(m,-l)a_{\G}(\frac{p^rl}{p^{2j}},-i),
\end{align*}
then proof ends if we show that $A_i=0$ for all $i=1,\cdots,g$.

First note that by Theorem \ref{Tgrid}, 
\begin{align}\label{ai}
A_i&=-\sum_{p^j|(m,p^r)} p^j b_{\G}(-i,\frac{p^rm}{p^{2j}})+\sum_{l=1}^{g}\sum_{p^j|(l,p^r)}p^j b_{\G}(-l,m)b_{\G}(-i,\frac{p^rl}{p^{2j}}).
\end{align}
Applying the classical Hecke operator $T_p$ on the cusp form $h_{\G,-i}$ in \eqref{mgf} for each $i=1,\cdots,g$,
we obtain by \cite[Proposition 5.3.1]{DS} that
\begin{equation}\label{gnst1}
h_{\G,-i}|T_{p^r}(\t)=\sum_{m=1}^\infty \left(\sum_{p^j|(m,p^r)} p^j b_{\G}(-i,{\frac{p^rm}{p^{2j}}})\right)q^m.
\end{equation}
On the other hand, since $h_{\G,-i}|T_{p^r}(\t)$ is a linear combination of $h_{\G,-l}(\t)$ ($1\leq l\leq g$),  we have
\begin{equation}\label{gnst2}
h_{\G,-i}|T_{p^r}(\t)=\sum_{l=1}^g \left(\sum_{p^j|(l,p^r)} p^j b_{\G}(-i,{\frac{p^rl}{p^{2j}}})\right) h_{\G,l}(\t).
\end{equation}
Comparing the coefficients of $q^m$ in \eqref{gnst1} and \eqref{gnst2}, we arrive at $A_i=0$.
\end{proof} 

\begin{rmk} 
(1) For simplicity, we proved Theorem \ref{fpm-hecke} and thus Theorem \ref{tpm} when $i\infty$ is not a Weierstrass point on $X(\G)$. But a similar congruence holds in general.

(2) When $p>g$ and $p\mid N$, we obtain a formula analogous to \eqref{fpm-hecke1} if $p\nmid m$:
\begin{equation}\label{fpm-hecke2}
f_{\G,m}|T(p^r)(\t)=f_{\G,p^rm}(\t)+\sum_{l=1}^g a_{\G}(m,-l)f_{\G,lp^r}(\t)
\end{equation}
by applying $T_{p^r}$ on $h_{\G,-i}(\t)$ again.  Note that this time, $T_p=U_p$ and $A_i$ in \eqref{ai} equals
$$A_i=-b_{\G}(-i,p^rm)+\sum_{l=1}^{g}b_{\G}(-l,m)b_{\G}(-i,lp^r).$$
Thus $f_{\G,m}|T(p^r)(\t)$ is still in $M_0^{!,\i}(\G).$ 

(3) When $p\mid N$ and $p\mid m$,  $T(p^r)$ may or may not preserve the space $M_0^{!,\infty}(\G)$.
For example, $f_{11,11}|T(11)(\t)$ is weakly holomorphic, while $f_{22+2,2}|T(2)(\t)$ is not. The reasons are as follows: Firstly, $f_{11,11}(\t)=q^{-11}-q^{-1}+O(q)\in M_0^{!}(11)$. By \eqref{fjg}, $f_{11,11}(\t)=J_{11,11}(\t)-J_{11,1}(\t)$. Hence by \eqref{heckedef}, \eqref{Hecke3-1}, and \eqref{hjN}, $f_{11,11}|T(11)(\t)=J_{11,121}(\t)-J_{11,11}(\t)+11J_{1,1}(\t).$  But since $f_{11,121}=J_{11,121}-J_{11,1}$, we have
$$f_{11,11}|T(11)(\t)=f_{11,121}(\t)-f_{11,11}(\t)+11J_{1,1}(\t)\in M_0^{!}(11).$$

Nextly, since $f_{22+2,2}=q^{-2}+q+2q^2+q^3+3q^4-3q^5+\cdots\in M_0^{!}(22+2)$, $f_{22+2,2}=J_{22+2,2}$. Regarding $\G^{(2)}=\G_0(11)$ and $J_{22+2,2}^{(2)}=J_{11,2}$, we obtain from \eqref{Hecke3-2} that
$$f_{22+2,2}|T(2)(\t)=J_{22+2,2}|U_2(\t)+J_{11,2}(2\t)=f_{22+2,2}|U_2(\t)+J_{11,2}|V_2(\t),$$
which is non-holomorphic.
\end{rmk}

Under certain conditions though, the Hecke operator is assured to preserve the space as given in Theorem \ref{heckepres}.

\begin{proof}[Proof of Theorem \ref{heckepres}] By \eqref{tm} and Theorem \ref{tpm}, it suffices to prove that $f|T(p^r)(\t)$ is weakly holomorphic for a prime $p\mid N$ and a positive integer $r$.
 Employing \eqref{deft}, \eqref{heckedef} and \eqref{def-fn}, we find that
\begin{equation}\label{ftpk}
f|T(p^r)(\t)=\sum_{i=0}^{r} f^{(p^i)}|{U^*_{p^{r-i}}}(p^i\t).
\end{equation}
By hypothesis,  $f^{(p^i)}\in M_0^{!,\infty}(\G^{(p^i)})$ for all $i=1,2,\dots, r$. Since $p\mid N$, for each $i$, $f^{(p^i)}|{U^*_{p^{r-i}}(\t)}\in M_0^{!}(\G^{(p^i)})$ by \cite[Lemma 6]{AL}, and hence $f^{(p^i)}|{U^*_{p^{r-i}}}(p^i\t)$ is weakly holomorphic.
Therefore, the result follows.
\end{proof}

\section{Replicability for higher genera}

In \cite{S}, Smith proposed another way to define a replicate of $f\in M_0^{!,\i}(\G)$ and also define a Hecke operator on $M_0^{!,\i}(\G)$ for non-zero genus. We will briefly describe his interesting construction.

Let $\A$ be an algebra of meromorphic functions in a disk $D$ around $0$, which have poles only at $0$.
For example, one can take $\A=M_0^{!,\infty}(\G)$. For a function $f$ meromorphic on $D$ with its only poles at $0$ (that is, $f$ has a $q$-expansion), define a projection $P_{\A}$ which maps $f$ to its closest function $f^*\in\A$ with respect to a metric on functions defined in the punctured disk $D-\{0\}$.
If there exists $f^*\in\A$ that has the same principal part with $f$, then $P_{\A}(f)=f^*$. 
Now define a Hecke operator by
\begin{equation}\label{gh1}
f|\tilde{T}_n=P_{\A}(f|T_{n}),
\end{equation}
where $T_n$ is the classical Hecke operator as before. If
\begin{equation}\label{gh2}
f|\tilde{T}_n(q)=\frac{1}{n}\sum_{ab=n} f^{[a]}|U_b(q^a)
\end{equation}
for certain functions defined by $q$-series $f^{[a]}$, then $f^{[a]}$ is called the $a$-plicate of $f$. (Smith also used the notation $f^{(a)}$ for the $a$-plicate of $f$. Here we use the square bracket to distinguish it with ours.) The functions on genus zero modular curves (with the principal part $q^{-1}$) satisfying \eqref{gh2} for the classical Hecke operator is called replicable.  
%Although Smith did not specifically stated, his result can be viewed as an extension of replicability to higher genera. 
If replicates of $f\in\A$ exist, $f$ and its $p$-replicates satisfy the same congruence that was used to test the replicability of functions on genus zero modular curves, as pointed out in \cite{ACMS}.  
\begin{thm}\cite{S}\label{fp-con} Let $p$ be a prime. Suppose $f\in\A$ with rational integer coefficients and there exists a $p$-plicate $f^{[p]}$, then
$$f\equiv f^{[p]}\pmod p.$$
\end{thm}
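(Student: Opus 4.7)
The plan is to combine the defining equation \eqref{gh2} for the $p$-plicate with Fermat's little theorem applied to integer $q$-expansions. First, specializing \eqref{gh2} to $n=p$ yields
\[
p\cdot f|\tilde T_p(q) = f|U_p(q) + f^{[p]}(q^p).
\]
Since $f$ and $f^{[p]}$ both have rational integer coefficients and $f|\tilde T_p\in\A$ inherits integrality from $\A$, the right-hand side is divisible by $p$ in $\Z[[q,q^{-1}]]$, giving the basic congruence
\[
f|U_p(q) + f^{[p]}(q^p) \equiv 0 \pmod{p}. \qquad(\star)
\]

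Next, I would compare $(\star)$ with the classical (unprojected) Hecke identity $p\cdot f|T_p(q)=f|U_p(q)+f(q^p)$ and subtract to isolate the discrepancy between $f$ and its $p$-plicate:
\[
(f-f^{[p]})(q^p) = p\bigl(f|T_p - f|\tilde T_p\bigr).
\]
Once one shows that the right-hand side is divisible by $p$ as an integer $q$-series, i.e.\ that $f|T_p - f|\tilde T_p$ actually lies in $\Z[[q,q^{-1}]]$, the conclusion follows quickly. Indeed, Fermat's little theorem applied coefficient-wise on integer $q$-series gives the Frobenius congruence $g(q^p)\equiv g(q)^p\pmod{p}$ for every $g\in\Z[[q,q^{-1}]]$; applied to $g=f-f^{[p]}$ this converts the preceding divisibility into
\[
(f-f^{[p]})^p\equiv 0\pmod{p}
\]
in $\mathbb{F}_p[[q,q^{-1}]]$, and since the latter ring is an integral domain, $f\equiv f^{[p]}\pmod{p}$.

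The main obstacle I anticipate is the integrality of $f|T_p-f|\tilde T_p$: since $f|T_p$ lies a priori only in $\tfrac{1}{p}\Z[[q,q^{-1}]]$ while $f|\tilde T_p$ is integer-valued, the required divisibility is essentially equivalent to the very conclusion of the theorem, and avoiding circularity requires exploiting the specific structure of the projection $P_{\A}$ (which preserves principal parts) together with an inductive coefficient-by-coefficient analysis. The plan is to first use $(\star)$ to handle the coefficients at $q^n$ with $p\nmid n$ (where the identity decouples into $a_{pn}\equiv 0\pmod p$), and then to bootstrap to the coefficients at $q^{pn}$, using the already-established congruences on lower-index coefficients of $f$ and $f^{[p]}$ to force the $\tfrac{1}{p}$-denominators in $f|T_p-f|\tilde T_p$ to cancel.
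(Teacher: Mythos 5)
Your congruence $(\star)$ is false, and this breaks the argument at the outset. Whatever normalization one adopts in \eqref{gh2}, the function $f|\tilde T_p=P_{\A}(f|T_p)$ is \emph{not} $p$-integral: its principal part agrees with that of $f|T_p$, which contains the non-integral term $\frac1p f(q^p)$. Concretely, writing $q^{-m_0}$ for the leading term of $f$, in $p\,f|\tilde T_p = p\,f|U_p(q)+f^{[p]}(q^p)$ the coefficient of $q^{-pm_0}$ equals the leading coefficient of $f$ (a unit mod $p$ in every case of interest, e.g.\ $f=f_{\G,m}$), so the right-hand side is certainly not $\equiv 0\pmod p$. The correct reduction is $p\,f|\tilde T_p\equiv f^{[p]}(q^p)\pmod p$, which uses only the integrality of $f|U_p$, not of $f|\tilde T_p$. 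Since your ``decoupling'' step (the claimed congruence $a_{pn}\equiv 0\pmod p$ for $p\nmid n$) and the bootstrap for the coefficients at $q^{pn}$ both rest on $(\star)$, the proposed route to the integrality of $f|T_p-f|\tilde T_p$ collapses --- and, as you yourself note, that integrality is essentially equivalent to the theorem, so nothing has been gained.

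The idea you are missing is how to produce a second, independent evaluation of $p\,f|\tilde T_p$ modulo $p$. The paper applies Frobenius to the \emph{unprojected} operator first: $p\,f|T_p = f(q^p)+p\,f|U_p(q)\equiv f(q^p)\equiv f(q)^p \pmod p$. The key observation is that $f(q)^p$ already lies in $\A$, because $\A$ is an algebra; since $P_{\A}$ is determined by principal parts and $p\,f|T_p$ and $f(q)^p$ have the same principal part modulo $p$, the congruence survives the projection and yields $p\,f|\tilde T_p\equiv f(q)^p\pmod p$. Comparing this with $p\,f|\tilde T_p\equiv f^{[p]}(q^p)\equiv f^{[p]}(q)^p\pmod p$ gives $\lt(f-f^{[p]}\rt)^p\equiv 0$ in $\mathbb{F}_p[[q]][q^{-1}]$, whence $f\equiv f^{[p]}\pmod p$. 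Your final step (Frobenius plus the integral-domain argument) is sound and matches the paper's; it is the passage through $f(q)^p\in\A$, rather than a coefficient-by-coefficient analysis of the projection, that supplies the divisibility you could not close.
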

\begin{proof} Since $pf|T_p=f(q^p)+pf|U_p(q)\equiv f(q^p)\equiv f(q)^p\pmod{p}$ and $pf|T_p$ and $f(q)^p$ have the same principal part modulo $p$, we have
$$pf|\tilde{T}_p\equiv f(q)^p\pmod{p}.$$
On the other hand, $pf|\tilde{T}_p=f^{[p]}(q^p)+pf|U_p(q)\equiv f^{[p]}(q^p)\pmod{p}$, and hence the result follows.
\end{proof}

Smith extended his notions so that the identity in \eqref{gh2} works with every element of $\A$. Set a new algebra  $\A^{[a]}$ to which $f^{[a]}$ belongs and extend the definition of the replicate by setting  $f^{[a]}=P_{\A^{[a]}}(f)$.  Then define a new Hecke operator $\hat{T}_n(q)$ by
\begin{equation}\label{gh3}
f|\hat{T}_n(q)=\frac{1}{n}\sum_{ab=n} b f^{[a]}|U_b(q^a)
\end{equation}
using the extended definition of the replicate. It may sound a bit vague, especially his description of a new algebra $\A^{[a]}$.  But if we take $\A=M_0^{!,\infty}(\G)$ and $\A^{[p]}=M_0^{!,\infty}(\G^{(p)})$ for a prime $p$, then the resulting replicates are same as ours: Suppose $f\in M_0^{!,\infty}(\G)$ and $f^{(p)}\in M_0^{!,\infty}(\G^{(p)})$.
If $p\nmid N$, then both $\frac1p T(p)$ and $\hat{T}_p$ are the classical Hecke operators, and  hence $f=f^{[p]}=f^{(p)}.$
If $p\mid N$, then by \eqref{deft}, \eqref{heckedef} and \eqref{def-fn}, we have $f|T(p)(\t)=f|U^*_p(\t)+f^{(p)}(p\t)$, while $pf|T_p(\t)=pf|U_p(\t)+f(p\t)$.  Since $f$ and $f^{(p)}$ have the same principal part by \eqref{def-fn}, $P_{\mathbb A^{[p]}}(f)=f^{(p)}$.
Accordingly, $f^{[p]}=f^{(p)}$ in either case.

\begin{proof}[Proof of Theorem \ref{ffp}] Theorem \ref{ffp} holds by Theorem \ref{fp-con} and the arguments following Theorem \ref{fp-con}.  
\end{proof}

\section{Congruences for coefficients of $f_{\G,m}(\t)$}

Throughout this section, we assume $X(\G)$ is of genus $g\geq 1$ and $i\infty$ is not a Weierstrass point of $X(\G)$.
Using our Hecke operator, we obtain several congruence relations that the coefficients of $f_{\G,m}$ satisfy. The first one is found by adopting the arguments in \cite{JT}.
\begin{prop}\label{congp}  Let $p\nmid N$ be a prime and $p>g$. Then for any positive integers $r$, $m$ and $n$ with $m>g$ and $p\nmid n$, we have
$$a_{\G}(mp^r,n)+\sum_{l=1}^g a_\G(mp^{r-1},-l)a_\G(lp,n)\equiv 0\pmod p,$$
where these coefficients are all integers.\end{prop}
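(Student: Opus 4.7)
The plan is to apply Theorem \ref{fpm-hecke} to $f_{\G,mp^{r-1}}$ with the Hecke operator $T(p)$, compare the $q^n$-coefficient of both sides, and reduce modulo $p$. Since $p\nmid N$ implies $\G^{(p)}=\G$, the defining expression \eqref{deft} for $T(p)$ reads
$$f_{\G,mp^{r-1}}|T(p)(\t)=f_{\G,mp^{r-1}}|{U^*_p}(\t)+f_{\G,mp^{r-1}}(p\t).$$
The $q^n$-coefficient of the right-hand side for $p\nmid n$ is $p\cdot a_\G(mp^{r-1},pn)+0$, since $f_{\G,mp^{r-1}}(p\t)$ contains only exponents divisible by $p$; this coefficient is therefore $\equiv 0\pmod p$, so the right-hand side of the identity produced by Theorem \ref{fpm-hecke} must also vanish modulo $p$ at the $q^n$ slot.

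Writing that identity out with $m$ replaced by $mp^{r-1}$ and exponent $1$,
$$f_{\G,mp^{r-1}}|T(p)(\t)=\sum_{p^j\mid(mp^{r-1},p)}p^j f_{\G,\frac{mp^r}{p^{2j}}}(\t)+\sum_{l=1}^g\sum_{p^j\mid(l,p)}p^j\,a_\G(mp^{r-1},-l)\,f_{\G,\frac{lp}{p^{2j}}}(\t),$$
the hypothesis $p>g\geq l$ forces $j=0$ in the inner sum, collapsing the second summand to $\sum_{l=1}^g a_\G(mp^{r-1},-l)f_{\G,lp}$. In the first summand only $j\in\{0,1\}$ occurs; the $j=1$ contribution (present exactly when $r\geq 2$ or $p\mid m$) is a multiple of $p$, hence vanishes modulo $p$ and leaves $f_{\G,mp^r}$. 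Reading the $q^n$-coefficient modulo $p$ now gives
$$0\equiv a_\G(mp^r,n)+\sum_{l=1}^g a_\G(mp^{r-1},-l)\,a_\G(lp,n)\pmod p,$$
as desired.

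The integrality of the coefficients $a_\G(\cdot,\cdot)$ follows from the construction of $\{f_{\G,m}\}$: the reduced row echelon basis is obtained from integral generators of $\C(X(\G))$ by Gauss elimination whose pivot in \eqref{mfg} is $1$, so the elimination can be carried out entirely over $\Z$. I foresee no serious technical obstacle in this argument; the one critical input is the hypothesis $p>g$, which is precisely what makes the inner double sum in Theorem \ref{fpm-hecke} collapse to its $j=0$ piece so that no unwanted cross-contributions survive modulo $p$.
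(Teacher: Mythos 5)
Your argument is correct. It rests on the same key input as the paper's proof, namely comparing the two expressions for $f|T(p)$ when $p\nmid N$ (the defining formula $f|T(p)=f|U_p^*+f(p\tau)$ against Theorem \ref{fpm-hecke}), and all your reductions check out: $mp^{r-1}\geq g+1$ so the theorem applies, $p>g$ kills the $j\geq 1$ terms in the inner sum since $(l,p)=1$ for $1\leq l\leq g$, the possible $j=1$ term $p\,f_{\G,mp^{r-2}}$ in the outer sum dies modulo $p$, and the $q^n$-coefficient of the left-hand side is $p\,a_\G(mp^{r-1},pn)\equiv 0\pmod p$ because $p\nmid n$ annihilates the $f(p\tau)$ contribution. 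The difference from the paper is one of deployment rather than substance: the paper first records the identity for a general index $m$ (its equation \eqref{cong0}), rewrites it as a recursion, and telescopes it $r$ times with the substitutions $m\mapsto mp^i$, $n\mapsto np^{r-i-1}$ to reach an exact identity (\eqref{cong4}) before multiplying by $p^r$ and reducing modulo $p$; you instead substitute $m\mapsto mp^{r-1}$ into the single-step identity and reduce immediately. Your route is shorter and cleaner for the stated congruence; the paper's longer route has the side benefit of producing the exact formula \eqref{cong4}, which packages additional divisibility information relating $a_\G(m,np^r)-a_\G(m/p,np^{r-1})$ to the quantities appearing in the proposition, but none of that is needed for the mod-$p$ statement. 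Your remark on integrality is also in line with the paper's (equally brief) justification via the integral reduced row echelon construction.
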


\begin{proof}
For a prime $p\nmid N$, it follows from \eqref{deft} and \eqref{heckedef} that
$$f_{\G,m}|T(p)(\t)=f_{\G,m}|U^*_p(\t)+f_{\G,m}(p\t).$$ 
Comparing this with \eqref{fpm-hecke1}, we obtain that
\begin{equation}\label{cong0}
pa_{\G}(m,np)+a_{\G}(m,\frac{n}{p})=a_{\G}(mp,n)+\sum_{l=1}^g a_\G(m,-l)a_\G(lp,n)+pa_{\G}(\frac{m}{p},n). 
\end{equation}
Thus we have the following congruence relation of the Fourier coefficients of $f_{\G,m}$'s:
 \begin{equation}\label{cong1}
a_{\G}(m,np)-a_{\G}(\frac{m}{p},n)=p^{-1}\left(a_{\G}(m{p},n)+\sum_{l=1}^g a_\G(m,-l)a_\G(lp,n)-a_{\G}(m,\frac{n}{p})\right).
\end{equation}
For any $r\in \mathbb{N}$ and $1\leq i\leq r-1$, we replace $m$ with $mp^i$ and $n$ with $np^{r-i-1}$ in (\ref{cong1}) so that we have
 \begin{eqnarray}\label{cong2}
&&p^{-i}\left(a_{\G}(mp^i,np^{r-i})-a_{\G}(mp^{i-1},np^{r-i-1})\right)\cr
&&\quad =p^{-i-1}\left(a_{\G}(mp^{i+1},np^{r-i-1})+\sum_{l=1}^g a_\G(mp^i,-l)a_\G(lp,np^{r-i-1})-a_{\G}(mp^i,{n}{p}^{r-i-2})\right).
\end{eqnarray}
Next, we replace $n$ with $np^{r-1}$ in (\ref{cong1}) to obtain
 \begin{eqnarray}\label{cong3}
&&a_{\G}(m,np^{r})-a_{\G}(\frac{m}{p},np^{r-1})\cr
&&\quad =p^{-1}\left(a_{\G}(m{p},np^{r-1})+\sum_{l=1}^g a_\G(m,-l)a_\G(lp,np^{r-1})-a_{\G}(m,np^{r-2})\right).
\end{eqnarray}
Using (\ref{cong2}) $(r-1)$ times on the right-hand side of (\ref{cong3}), we have

\begin{eqnarray}\label{cong4}
a_{\G}(m,np^r)&-&a_{\G}(\frac{m}{p},np^{r-1})=p^{-r}\left(a_{\G}(mp^r,n)-a_{\G}(mp^{r-1},\frac{n}{p})\right)\cr
&&\ +p^{-1}\sum_{l=1}^g a_\G(m,-l)a_\G(lp,np^{r-1})+p^{-2}\sum_{l=1}^g a_\G(mp,-l)a_\G(lp,np^{r-2})\cr
&&\ +p^{-3}\sum_{l=1}^g a_\G(mp^2,-l)a_\G(lp,np^{r-3})+\cdots+p^{-r}\sum_{l=1}^g a_\G(mp^{r-1},-l)a_\G(lp,n).
\end{eqnarray}
Multiplying both sides by $p^r$ proves the theorem.
\end{proof}

The generalized Hecke operator allows one to derive a stronger result than Proposition \ref{congp}.

\begin{prop}\label{congp2} Let $p\nmid N$ be a prime. Then for any positive integers $r$, $n$ and $m>g$ with $p\nmid m$ and $p\nmid n$, we have
$$a_{\G}(mp^r,n)+\sum_{l=1}^g \sum_{p^j\mid(l,p)} p^j a_{\G}(m,-l)a_{\G}(lp^{r-2j},n)\equiv 0\pmod{p^r},$$
where these coefficients are all integers.
\end{prop}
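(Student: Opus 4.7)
The plan is to read off both sides of the identity \eqref{fpm-hecke1} at the $q^n$-coefficient and exploit the hypothesis $p\nmid n$ to collapse the left-hand side to a single term divisible by $p^r$. Since $p\nmid N$, for every $i$ the replicate satisfies $f_{\G,m}^{(p^i)}=f_{\G,m}$, so the definition \eqref{deft} of the generalized Hecke operator gives
\begin{equation*}
f_{\G,m}\bigl|T(p^r)(\t)=\sum_{i=0}^{r} f_{\G,m}\bigl|U^*_{p^{r-i}}(p^i\t).
\end{equation*}
Writing $f_{\G,m}=\sum_k a_{\G}(m,k)q^k$, each summand expands as $p^{r-i}\sum_k a_{\G}(m,kp^{r-i})q^{kp^i}$, so the coefficient of $q^n$ in the $i$-th term vanishes unless $p^i\mid n$, in which case it equals $p^{r-i}a_{\G}(m,np^{r-2i})$. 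The assumption $p\nmid n$ therefore forces only $i=0$ to contribute, and the $q^n$-coefficient of $f_{\G,m}|T(p^r)$ is exactly $p^r\,a_{\G}(m,np^r)$.

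Next I would read off the $q^n$-coefficient of the right-hand side of \eqref{fpm-hecke1}. The first sum $\sum_{p^j\mid(m,p^r)}p^j f_{\G,\frac{p^rm}{p^{2j}}}(\t)$ collapses to its $j=0$ term because $p\nmid m$, contributing $a_{\G}(mp^r,n)$. The double sum contributes $\sum_{l=1}^g\sum_{p^j\mid(l,p^r)}p^j a_{\G}(m,-l)\,a_{\G}(lp^{r-2j},n)$. Combining these two computations with the identity \eqref{fpm-hecke1} yields
\begin{equation*}
a_{\G}(mp^r,n)+\sum_{l=1}^g\sum_{p^j\mid(l,p^r)}p^j a_{\G}(m,-l)\,a_{\G}(lp^{r-2j},n)\;=\;p^r a_{\G}(m,np^r),
\end{equation*}
which is the desired congruence modulo $p^r$. (Here the notation $(l,p)$ in the statement is understood as $(l,p^r)$, matching \eqref{fpm-hecke1}.)

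For the integrality assertion I would appeal to the construction in the paragraph preceding \eqref{mfg}: starting from the rational/integral generators $X_\G,Y_\G$ of $\C(X(\G))$ recalled in Section 5, the reduced row echelon basis $\{f_{\G,m}\}$ is obtained by Gauss elimination, hence every coefficient $a_{\G}(m,k)$ is a (rational) integer, and then so is $p^r a_{\G}(m,np^r)$. There is no genuine obstacle in this argument: the whole content is packaged in Theorem \ref{fpm-hecke}, and the only subtlety worth highlighting is the disappearance of all $i\geq 1$ terms on the left, which is precisely what converts the identity \eqref{fpm-hecke1} into a congruence of depth $p^r$ rather than merely $p$ as in Proposition \ref{congp}. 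The stronger conclusion, compared with Proposition \ref{congp} which was proved by an iterative scheme from the $p$-plication formula, reflects exactly the gain obtained by using the full higher-power Hecke operator $T(p^r)$ in place of repeated applications of $T(p)$.
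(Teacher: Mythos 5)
Your proof is correct and follows essentially the same route as the paper: expand $f_{\G,m}|T(p^r)$ as $\sum_{i=0}^{r}f_{\G,m}|U^*_{p^{r-i}}(p^i\t)$ (using $f_{\G,m}^{(p^i)}=f_{\G,m}$ since $p\nmid N$), observe that $p\nmid n$ kills all terms except $i=0$ so the $q^n$-coefficient is $p^r a_{\G}(m,np^r)$, and compare with the $q^n$-coefficient of the right-hand side of \eqref{fpm-hecke1}, where $p\nmid m$ collapses the first sum to $a_{\G}(mp^r,n)$. Your reading of $(l,p)$ as $(l,p^r)$ and the integrality remark are both consistent with the paper.
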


\begin{proof}
By \eqref{deft} and \eqref{heckedef}, we have
\begin{equation}\label{1c}f_{\G,m}|T(p^r)(\t)=\sum_{i=0}^{r} f_{\G,m}|{U^*_{p^{r-i}}}(p^i\t).\end{equation}
Hence the coefficient of $q^n$ in $f_{\G,m}|T(p^r)(\t)$ must be 
\begin{equation}\label{eq1}
\sum_{i=0}^{r}p^{r-i}a_\G(m,p^{r-i}\frac{n}{p^i}),
\end{equation}
where $a_\G(m,p^{r-i}\frac{n}{p^i})=0$ if $p^i\nmid n$.
Comparing \eqref{eq1} with the right hand side of  (\ref{fpm-hecke1}), we have 
\begin{equation}\label{eq3}
\sum_{i=0}^{r}p^{r-i}a_\G(m,p^{r-i}\frac{n}{p^i})=\sum_{p^j|(m,p^r)}p^{j}a_\G(\frac{p^{r} m}{p^{2j}},n)+\sum_{l=1}^g \sum_{p^j\mid(l,p)} p^j a_\G(m,-l)a_\G(\frac{lp^r}{p^{2j}},n).
\end{equation}
Since $p\nmid n$ and $p\nmid m$, \eqref{eq3} is simplified to
$$p^ra_\G(m,p^r n)=a_\G(p^r m,n)+\sum_{l=1}^g \sum_{p^j\mid(l,p)} p^j a_\G(m,-l)a_\G(\frac{lp^r}{p^{2j}},n),$$
and the result follows.
\end{proof}

Note that  the right hand side of \eqref{eq3} is congruent modulo $p$ to
$$a_\G(p^r m,n)+\sum_{l=1}^g a_\G(m,-l)a_\G(lp^r,n).$$
Thus the following corollary holds for all $m$, including when $p\mid m$. 

\begin{cor}\label{congm} 
Let $p\nmid N$ be a prime. Then for any positive integers $r$, $n$ and $m>g$ with $p\nmid n$, we have
$$a_\G(p^r m,n)+\sum_{l=1}^g a_\G(m,-l)a_\G(lp^r,n)\equiv 0\pmod{p},$$
where these coefficients are all integers.
\end{cor}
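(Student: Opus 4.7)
The plan is to extract this corollary as a direct mod-$p$ reduction of equation \eqref{eq3} in the proof of Proposition \ref{congp2}, thereby bypassing the hypothesis $p\nmid m$. The first thing I would verify is that the derivation of \eqref{eq3} itself---namely the comparison of the expression \eqref{eq1} coming from \eqref{1c} with the right-hand side of \eqref{fpm-hecke1}---uses the hypothesis $p\nmid N$ and $m>g$ but never requires $p\nmid m$. That latter assumption enters only in the subsequent simplification of the two sides, which is what ultimately gives the sharper $p^r$-congruence in Proposition \ref{congp2}. Consequently \eqref{eq3} remains valid for every positive integer $m>g$, whether or not $p\mid m$.

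Next I would reduce both sides of \eqref{eq3} modulo $p$. On the right-hand side, every term carrying a prefactor $p^j$ with $j\geq 1$ vanishes, so only the contributions indexed by $j=0$ survive: the first sum collapses to $a_\G(p^r m,n)$ and the double sum collapses to $\sum_{l=1}^g a_\G(m,-l)\,a_\G(l p^r,n)$. On the left-hand side I would invoke $p\nmid n$: for every $i\geq 1$ we have $p^i\nmid n$, so the convention stated right after \eqref{eq1} forces $a_\G(m,\,p^{r-i}n/p^i)=0$; only the $i=0$ term survives, giving $p^r a_\G(m,p^r n)$, which is $\equiv 0\pmod{p}$ since $r\geq 1$. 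Combining the two reductions yields the asserted congruence.

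There is essentially no obstacle here beyond careful bookkeeping of $p$-adic valuations; the main conceptual point, and the only thing worth emphasizing in the write-up, is that relaxing the conclusion from mod $p^r$ to mod $p$ trivializes precisely the higher-$j$ terms whose control previously demanded $p\nmid m$. Thus the corollary follows from the same identity that proves Proposition \ref{congp2}, but requires strictly weaker hypotheses on $m$.
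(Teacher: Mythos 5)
Your proposal is correct and is essentially the paper's own argument: the paper proves this corollary precisely by observing that \eqref{eq3} holds without the hypothesis $p\nmid m$ and that its right-hand side reduces mod $p$ to $a_\G(p^r m,n)+\sum_{l=1}^g a_\G(m,-l)a_\G(lp^r,n)$, while the left-hand side reduces to $0$ since only the $i=0$ term $p^r a_\G(m,p^rn)$ survives when $p\nmid n$. Your bookkeeping of which terms vanish matches the paper's (tersely stated) reasoning exactly.
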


By Proposition \ref{congp} and Corollary \ref{congm}, we have another congruence relation of coefficients of $f_{\G,m}$ modulo $p$.

\begin{cor}\label{m}
Let $p\nmid N$ be a prime and $p>g$. Then for any positive integers $r$, $n$ and $m>g$ with $p\nmid n$, we have
$$\sum_{l=1}^g a_\G(mp^{r-1},-l)a_\G(lp,n)\equiv \sum_{l=1}^g a_\G(m,-l)a_\G(lp^r,n)\pmod{p},$$
where these coefficients are all integers.
\end{cor}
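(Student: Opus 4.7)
The plan is to obtain Corollary \ref{m} by producing two different expressions modulo $p$ for the same quantity $a_\G(mp^r,n)$ and equating them. Both Proposition \ref{congp} and Corollary \ref{congm} are mod $p$ congruences whose ``leading term'' is $a_\G(mp^r,n)$, so subtracting one from the other will cancel this term and leave exactly the desired identity.

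Concretely, under the hypotheses $p\nmid N$, $p>g$, $m>g$ and $p\nmid n$, the hypotheses of both Proposition \ref{congp} and Corollary \ref{congm} are met (Proposition \ref{congp} is where the condition $p>g$ is needed; Corollary \ref{congm} only uses $p\nmid N$). Therefore I may write
\begin{equation*}
a_\G(mp^r,n)\equiv -\sum_{l=1}^g a_\G(mp^{r-1},-l)\,a_\G(lp,n)\pmod p
\end{equation*}
from Proposition \ref{congp}, and
\begin{equation*}
a_\G(mp^r,n)\equiv -\sum_{l=1}^g a_\G(m,-l)\,a_\G(lp^r,n)\pmod p
\end{equation*}
from Corollary \ref{congm}. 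The integrality of all the coefficients involved follows from the same two results. Equating the right-hand sides and negating gives exactly
\begin{equation*}
\sum_{l=1}^g a_\G(mp^{r-1},-l)\,a_\G(lp,n)\equiv \sum_{l=1}^g a_\G(m,-l)\,a_\G(lp^r,n)\pmod p,
\end{equation*}
which is the statement of Corollary \ref{m}.

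There is essentially no obstacle here beyond bookkeeping: all the difficulty is absorbed into Proposition \ref{congp} (proved by iterating the single Hecke relation \eqref{cong1} down from $(m,np^r)$ to $(mp^r,n)$) and Corollary \ref{congm} (extracted from the identity \eqref{eq3} by reducing modulo $p$, which kills every term with a positive power of $p$ in front). The only thing worth double-checking is that the hypotheses match; in particular, $p>g$ is needed in Proposition \ref{congp} so that no divisor $p^j$ with $j\ge 1$ of any $l\in\{1,\dots,g\}$ can occur, and this same assumption carries over automatically to Corollary \ref{m}.
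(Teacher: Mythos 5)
Your proposal is correct and is exactly the paper's argument: the authors deduce Corollary \ref{m} by combining Proposition \ref{congp} and Corollary \ref{congm}, whose common leading term $a_\G(mp^r,n)$ cancels to leave the stated congruence. The hypothesis check ($p>g$ entering only through Proposition \ref{congp}) is also consistent with the paper.
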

\vspace{.1cm}

\begin{proof}[Proof of Theorem \ref{Tcong}]
If we give a restriction $p>g$ to Proposition \ref{congp2}, it proves Theorem \ref{Tcong} when $p\nmid N$.
It thus remains to prove that the congruence holds when $p\mid N$. But when $p\mid N$, $r,m\in\mathbb N$ with $p, m>g$ and $p\nmid m$, we already have in \eqref{fpm-hecke2} that
\begin{equation}\label{eq:hecke1}
f_{\G,m}|T(p^r)(\t)= f_{\G,p^rm}(\t)+\sum_{l=1}^{g} a_{\G}(m,-l)f_{\G,lp^r}(\t).
\end{equation}

On the other hand, by \eqref{ftpk}, we have
\begin{equation}\label{eq:hecke2}
f_{\G,m}|T(p^r)(\t)=f_{\G,m}|U^*_{p^r}(\t)+\sum_{i=1}^{r} f_{\G,m}^{(p^i)}|{U^*_{p^{r-i}}}(p^i\t).
 \end{equation}
When $p\nmid n$, the coefficient of $q^n$ of the first summand  in \eqref{eq:hecke2} is congruent to 0 modulo $p^r$, while the second summand does not contribute to the coefficient of $q^n$, because $i>0$. By comparing the coefficients of $q^n$ in \eqref{eq:hecke1} and \eqref{eq:hecke2}, we obtain \eqref{eq:cong} and complete the proof of Theorem \ref{Tcong}.
\end{proof}
Recall that \eqref{eq:hecke1} was derived from \eqref{a2}, in which the first equation is stated as
\begin{equation}\label{a22}f_{\G,m}|T(p^r)(\t)=\sum_{p^j|(m,p^r)} p^j J_{\G,\frac{p^rm}{p^{2j}}}^{(p^j)}(\t)+\sum_{l=1}^{g} a_{\G}(m,-l)J_{\G,lp^r}(\t).\end{equation}
In modulo $p$,  the right hand sides of \eqref{a22} and \eqref{eq:hecke1} are equal.  Hence in case $p\mid N$, whether $p\mid m$ or not, we obtain from \eqref{eq:hecke2} that
\begin{equation}\label{cong00}
f_{\G,p^rm}(\t)+\sum_{l=1}^{g} a_{\G}(m,-l)f_{\G,lp^r}(\t)\equiv f_{\G,m}^{(p^r)}(p^r\t) \pmod p.
\end{equation}

%J_{\frac{N}{(N,p^r)},m}(p^r\t)+a_{\G}(m,-1)J_{\frac{N}{(N,p^r)},1}(p^r\t) 
Thus the congruence in Corollary \ref{congm} when $p>g$  is generalized to include the case $p\mid N$ as follows: 

\begin{cor}\label{11cong} Let $p$ be any prime with $p>g$. Then for any positive integers $r$, $n$ and $m>g$ with $p\nmid n$, we have
$$a_{\G}(p^rm,n)+\sum_{l=1}^g a_\G(m,-l)a_\G(lp^r,n)\equiv 0\pmod{p},$$
where these coefficients are all integers.
\end{cor}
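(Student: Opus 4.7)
The plan is to split the argument into two cases depending on whether $p\nmid N$ or $p\mid N$. In the first case, the statement is already contained in Corollary \ref{congm}, which asserts exactly this congruence under the hypothesis $p\nmid N$ (with no constraint $p>g$). So the only new content of Corollary \ref{11cong} lies in the case $p\mid N$, and the upgrade is precisely that the hypothesis $p>g$ lets us extend the conclusion to prime divisors of the level.

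For the case $p\mid N$, I would read the congruence directly off the identity \eqref{cong00} derived immediately before the statement, namely
\[
f_{\G,p^rm}(\t)+\sum_{l=1}^{g} a_{\G}(m,-l)f_{\G,lp^r}(\t)\equiv f_{\G,m}^{(p^r)}(p^r\t)\pmod{p}.
\]
The key observation is that the right-hand side is a function of $p^r\t$, so its $q$-expansion is supported only on exponents divisible by $p^r$. Consequently, whenever $p\nmid n$ (hence a fortiori $p^r\nmid n$), the coefficient of $q^n$ on the right vanishes modulo $p$. Extracting the coefficient of $q^n$ on the left then yields
\[
a_{\G}(p^rm,n)+\sum_{l=1}^{g}a_{\G}(m,-l)\,a_{\G}(lp^r,n)\equiv 0\pmod{p},
\]
which is exactly what is claimed. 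Integrality of the Fourier coefficients follows from the construction of the reduced row echelon basis $\{f_{\G,m}\}$ from rational-integer generators $X_\G,Y_\G$ of $\C(X(\G))$ as discussed before Theorem \ref{Tgrid}.

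I do not anticipate any serious obstacle, since the principal work has already been done in establishing \eqref{cong00}: the Hecke-theoretic identity \eqref{ftpk} combined with the defining relation \eqref{a22} for the action of $T(p^r)$ on $f_{\G,m}$ (and the congruence $f_{\G,m}|U^*_{p^r}(p^i\t)\equiv f_{\G,m}^{(p^r)}(p^r\t)\pmod p$ when $i=r$, plus vanishing of the other $i>0$ contributions to the coefficient of $q^n$ with $p\nmid n$) is what delivered \eqref{cong00} in the first place. The residual step of reading off the coefficient of $q^n$ is just bookkeeping, and the only point requiring a brief justification is the trivial implication $p\nmid n \Rightarrow p^r\nmid n$ used to kill the right-hand side modulo $p$.
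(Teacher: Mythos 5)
Your proposal is correct and follows essentially the same route as the paper: for $p\nmid N$ it defers to Corollary \ref{congm}, and for $p\mid N$ it extracts the coefficient of $q^n$ from \eqref{cong00}, noting that the right-hand side $f_{\G,m}^{(p^r)}(p^r\t)$ is supported on exponents divisible by $p^r$ and hence contributes nothing when $p\nmid n$. This is precisely how the paper deduces the corollary from the preceding discussion.
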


\section{Generating function of $J_{\G,n}(\t)$}

Let $q_1=e^{2\pi i \t_1}$ and $q_2=e^{2\pi i \t_2}$ and write 
 $f_{\G,m}(q_1):=f_{\G,m}(\t_1)$ and $h_{\G,n}(q_2):=h_{\G,n}(\t_2)$. 
Define 
\begin{equation}F_\G(q_1,q_2):=\sum_{m=0}^\infty f_{\G,m}(q_1)q_2^m=1+\sum_{m=g+1}^\infty f_{\G,m}(q_1)q_2^m \end{equation}
and
\begin{equation}H_\G(q_1,q_2):=\sum_{n=-g}^\infty h_{\G,n}(q_2)q_1^n. \end{equation}

It follows from Theorem \ref{Tgrid} that $F_\G(q_1,q_2)=-H_\G(q_1,q_2)$. This function $F_\G(q_1,q_2)$, the generating function of $f_{\G,m}(q_1)$, is a weight 2 meromorphic modular form with respect to $\t_2$ which has a simple pole at $\t_2=\t_1$ in the upper half-plane.

\begin{prop}\label{fgt}  Suppose the modular curve $X(\G)$ has genus $g$ and $i\infty$ is not a Weierstrass point on $X(\G)$.  If $q_1=e^{2\pi i \t_1}$ and $q_2=e^{2\pi i \t_2}$ for $\t_1,\tau_2\in\mathbb H$, then
\begin{equation*}\label{f3e}F_\G(q_1,q_2)=\frac{\sum_{\ell=-g}^{g+1} a_\G(g+1,-\ell)h_{\G,\ell}(q_2)+\sum_{\ell=1}^g f_{\ell+g+1}(q_1)h_{\G,-\ell}(q_2)+\sum_{j=g+1}^{2g}\sum_{\ell=j-g}^{g}a_\G(g+1,\ell-j)f_j(q_1)h_{-\ell}(q_2)}{f_{\G,g+1}(q_2)-f_{\G,g+1}(q_1)}.\end{equation*}
\end{prop}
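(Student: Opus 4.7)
The plan is to introduce
$$\Phi(\tau_1,\tau_2) := F_\G(q_1,q_2)\bigl(f_{\G,g+1}(q_2) - f_{\G,g+1}(q_1)\bigr)$$
and prove that, for each fixed $\tau_1 \in \H$, the function $\tau_2 \mapsto \Phi(\tau_1,\tau_2)$ lies in $S_2^{!,\infty}(\G)$. Once that is established, one expands $\Phi$ in the reduced row echelon basis $\{h_{\G,n}\}_{n \geq -g}$ of $S_2^{!,\infty}(\G)$, reads off the coefficients from the low-order $q_2$-expansion of $\Phi$, and checks that the result is the numerator $N(q_1,q_2)$ of the claimed formula. This mirrors the blueprint of \cite{AKN} for the genus zero situation.

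The first and hardest step is to verify that $\Phi(\tau_1,\cdot)$ belongs to $S_2^{!,\infty}(\G)$. One must view $F_\G(q_1,q_2)$ as a weight 2 meromorphic modular form on $\G$ in $\tau_2$ whose only singularities are a simple pole at $\tau_2 \in \G\tau_1$ and a pole at $i\infty$, and which vanishes at all other cusps. This identification is forced by the Zagier duality of Theorem \ref{Tgrid}: the generating series $-H_\G(q_1,q_2) = -\sum_{n\geq -g} h_{\G,n}(q_2)q_1^n$ is manifestly a formal combination of weight 2 forms in $S_2^{!,\infty}(\G)$, and Theorem \ref{Tgrid} pins $F_\G$ to agree with $-H_\G$ up to an explicit rational kernel that carries the pole at $\tau_2=\tau_1$. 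Multiplying by $f_{\G,g+1}(q_2) - f_{\G,g+1}(q_1)$, which has a simple zero at $\tau_2=\tau_1$ and is holomorphic in $\H$, cancels the pole at $\tau_2=\tau_1$; holomorphicity of $f_{\G,g+1}$ at the other cusps preserves the vanishing there. The main obstacle is supplying the analytic justification that $F_\G$ really defines such a modular form — i.e., reconciling the formal generating series with an honest meromorphic function in $\tau_2$ with the prescribed pole structure.

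For the remaining bookkeeping, a direct multiplication of Laurent series in $q_2$ gives
$$[q_2^{-(g+1)}]\Phi = 1, \qquad [q_2^{-\ell}]\Phi = a_\G(g+1,-\ell)\ (1 \leq \ell \leq g), \qquad [q_2^0]\Phi = 0,$$
and for $1 \leq \ell \leq g$,
$$[q_2^{\ell}]\Phi = a_\G(g+1,\ell) + f_{\G,\ell+g+1}(q_1) + \sum_{j=g+1}^{\ell+g} a_\G(g+1,\ell-j) f_{\G,j}(q_1).$$
Because $h_{\G,n}(q_2) = q_2^{-n} + O(q_2^{g+1})$ for $n \geq -g$, any $\psi \in S_2^{!,\infty}(\G)$ satisfies $\psi = \sum_{n \geq -g} [q_2^{-n}]\psi \cdot h_{\G,n}$. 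Applying this to $\Phi$, inserting the coefficients above, and regrouping — the $\ell \in \{-g,\dots,g+1\}$ contributions assemble the first sum of $N$, the terms $f_{\G,\ell+g+1}(q_1)h_{\G,-\ell}$ for $1 \leq \ell \leq g$ give the second, and reindexing $\sum_{\ell=1}^g \sum_{j=g+1}^{\ell+g}$ to $\sum_{j=g+1}^{2g}\sum_{\ell=j-g}^{g}$ produces the third — yields exactly $N(q_1,q_2)$, completing the proof.
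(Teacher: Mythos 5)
Your coefficient bookkeeping in the second half is correct (and, up to reorganization, it reproduces exactly the combination of $h_{\G,n}$'s that the paper arrives at), but the proof rests on a step you yourself flag as ``the main obstacle'' and then do not supply: that for fixed $\t_1$ the series $F_\G(q_1,q_2)$ defines a weight~$2$ meromorphic modular form in $\t_2$ whose only singularities are a simple pole along $\G\t_1$ and a pole at $i\i$, with vanishing at the other cusps. The sketch you offer for this --- that Theorem \ref{Tgrid} forces $F_\G=-H_\G$ up to a rational kernel --- does not close the gap. The duality only identifies the coefficients $a_\G(m,n)=-b_\G(n,m)$ in the range $m\ge g+1$, $n\ge -g$; the remaining ``diagonal'' contributions ($\sum_{m\ge g+1}q_1^{-m}q_2^m$ in $F_\G$ versus $-\sum_{n\ge -g}q_1^nq_2^{-n}$ in $-H_\G$) are genuinely different formal series, and the two generating functions converge in complementary regions (${\rm Im}\,\t_2>{\rm Im}\,\t_1$ for $F_\G$, the reverse for $H_\G$). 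Turning ``they agree as rational functions of $q_2/q_1$ after meromorphic continuation'' into a statement about modularity and the precise pole structure of $F_\G(q_1,\cdot)$ in $\H$ is essentially the content of the proposition itself, so as written the argument is circular: in the genus-zero prototype \cite{AKN} this modularity is \emph{obtained from} the closed formula $-\frac{1}{2\pi i}J'(\t_2)/(J(\t_2)-J(\t_1))$, not used to derive it. Even granting modularity, you assert without proof that the pole at $\t_2=\t_1$ is simple and that there are no other poles in $\H$, which is what licenses the claim $\Phi(\t_1,\cdot)\in S_2^{!,\i}(\G)$.

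The paper avoids all of this by a more elementary, purely single-variable argument. It first proves the product identity
\begin{equation*}
f_{\G,g+1}\cdot f_{\G,m}=f_{\G,m+g+1}+\sum_{j=g+1}^{m+g}a_\G(g+1,m-j)f_{\G,j}+\sum_{\ell=1}^g a_\G(m,-\ell)f_{\G,\ell+g+1}+\sum_{j,\ell}a_\G(g+1,\ell-j)a_\G(m,-\ell)f_{\G,j}+\mathrm{const},
\end{equation*}
which holds because $M_0^{!,\i}(\G)$ is closed under multiplication and each element is determined by its principal part and constant term (there being no nonconstant element with pole order $\le g$ at $i\i$). Multiplying by $q_2^m$, summing over $m\ge g+1$, and converting the sums $\sum_m a_\G(m,-\ell)q_2^m$ into $-(h_{\G,-\ell}(q_2)-q_2^\ell)$ via Theorem \ref{Tgrid} then yields the stated formula directly, with the right-hand side furnishing the meromorphic continuation rather than presupposing it. If you want to keep your route, you must independently establish the modularity and pole structure of $F_\G(q_1,\cdot)$ (for instance along the lines of the polar harmonic Maass form construction in \cite{BKLOR}), which is a substantial piece of analysis in its own right.
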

\begin{proof}
First, we compute $f_{\G,{g+1}}(q)f_{\G,m}(q)$ for each $m\geq g+1$.
\begin{align*}
f_{\G,g+1}(q)&f_{\G,m}(q)=\lt(q^{-g-1}+\sum_{k=-g}^\infty a_{\G}(g+1,k)q^k\rt)\lt(q^{-m}+\sum_{\ell=-g}^\infty a_{\G}(m,\ell)q^\ell\rt)\cr
=&q^{-(m+g+1)}+\sum_{k=-g}^\infty a_{\G}(g+1,k)q^{-m+k}+\sum_{\ell=-g}^\infty a_{\G}(m,\ell)q^{\ell-g-1}+\sum_{k,\ell=-g}^\infty a_{\G}(g+1,k)a_{\G}(m,\ell)q^{k+\ell}\cr
=&\lt(f_{\G,m+g+1}(q)-\sum_{i=-g}^\infty a_{\G}(m+g+1,i)q^{i}\rt)\cr
&+\lt(\sum_{j=g+1}^{m+g} a_{\G}(g+1,m-j)\lt(f_{\G,j}(q)-\sum_{i=-g}^\infty a_{\G}(j,i)q^{i}\rt)+\sum_{k\geq {m-g}} a_{\G}(g+1,k)q^{-m+k}\rt)\cr
& +\lt(\sum_{\ell=1}^g a_{\G}(m,-\ell)\lt(f_{\G,\ell+g+1}(q)-\sum_{i=-g}^\infty a_{\G}(\ell+g+1,i)q^{i}\rt)+\sum_{\ell=1}^\infty a_{\G}(m,\ell)q^{\ell-g-1}\rt)\cr
& +\lt(\sum_{j=g+1}^{2g}\sum_{\ell=j-g}^g a_{\G}(g+1,\ell-j)a_{\G}(m,-\ell)\lt(f_{\G,j}(q)-\sum_{i=-g}^\infty a_{\G}(j,i)q^{i}\rt)+\sum_{k+\ell\geq -g}a_{\G}(g+1,k)a_{\G}(m,\ell)q^{k+\ell}\rt)\cr
=&f_{\G,m+g+1}(q)+\sum_{j=g+1}^{m+g}a_{\G}(g+1,m-j)f_{\G,j}(q)+\sum_{\ell=1}^g a_{\G}(m,-\ell)f_{\G,\ell+g+1}(q)\cr
&\qquad+\sum_{j=g+1}^{2g}\sum_{\ell=j-g}^g a_{\G}(g+1,\ell-j)a_{\G}(m,-\ell)f_{\G,j}(q)\cr
&\qquad+\sum_{i=1}^g A_i q^{-i}+a_{\G}(g+1,m)+a_{\G}(m,g+1)+\sum_{i=-g}^g a_{\G}(g+1,-i)a_{\G}(m,i)+O(q),\cr
\end{align*}
where  $A_i\ (1\leq i\leq g)$ in the last equation are constants. 
Since there is no modular form in $M_0^{!,\i}(\G)$ with a pole at $i\i$ of the order less than $g+1$,  we conclude  that
\begin{align*}
&f_{\G,g+1}(q)f_{\G,m}(q)=f_{\G,m+g+1}(q)+\sum_{j=g+1}^{m+g}a_{\G}(g+1,m-j)f_{\G,j}(q)+\sum_{\ell=1}^g a_{\G}(m,-\ell)f_{\G,\ell+g+1}(q)\cr
&\qquad+\sum_{j=g+1}^{2g}\sum_{\ell=j-g}^g a_{\G}(g+1,\ell-j)a_{\G}(m,-\ell)f_{\G,j}(q)+a_{\G}(g+1,m)+a_{\G}(m,g+1)+\sum_{i=-g}^g a_{\G}(g+1,-i)a_{\G}(m,i).\cr.
\end{align*}
Multiplying both sides by $q_2^m$ and summing on $m$ from $m=g+1$ to $m=\infty$, we have
\begin{align*}f_{\G,g+1}(q_1)&\sum_{m=g+1}^\infty f_{\G,m}(q_1)q_2^m=\sum_{m=g+1}^\i f_{\G,m+g+1}(q_1)q_2^m+\sum_{m=g+1}^\i \lt(\sum_{j=g+1}^{m+g}a_{\G}(g+1,m-j)f_{\G,j}(q_1)\rt)q_2^m\cr
&+\sum_{\ell=1}^g f_{\G,\ell+g+1}(q_1)\sum_{m=g+1}^{\i}a_{\G}(m,-\ell)q_2^m+\sum_{j=g+1}^{2g}\sum_{\ell=j-g}^g a_{\G}(g+1,\ell-j)f_{\G,j}(q_1)\sum_{m=g+1}^\i a_{\G}(m,-\ell)q_2^m\cr
&+\sum_{m=g+1}^{\i}\lt(a_{\G}(g+1,m)+a_{\G}(m,g+1)+\sum_{i=-g}^g a_{\G}(g+1,-i)a_{\G}(m,i)\rt)q_2^m.
\end{align*}
Let the first, second, third, fourth and fifth summand in the far right-hand side above as (A), (B), (C), (D) and (E), respectively.  Then 
using Theorem \ref{Tgrid} for (C), (D) and (E) below, we obtain
\begin{align*}
\mathrm{(A)}=&\frac{1}{q_2^{g+1}}\lt(F_\G(q_1,q_2)-1-\sum_{j=g+1}^{2g+1}f_{\G,j}(q_1)q_2^j\rt)=q_2^{-g-1}F_\G(q_1,q_2)-q_2^{-g-1}-f_{\G,g+1}(q_1)-\sum_{j=g+2}^{2g+1}f_{\G,j}(q_1)q_2^{j-g-1},\cr
&\cr
\mathrm{(B)}=&\sum_{j=g+1}^{2g}f_{\G,j}(q_1)q_2^j\sum_{m=g+1}^\i a_{\G}(g+1,m-j)q_2^{m-j}+\sum_{m=g+1}^\i\sum_{j=2g+1}^{m+g}a_{\G}(g+1,m-j)f_{\G,j}(q_1)q_2^m\cr
=&\sum_{j=g+1}^{2g}f_{\G,j}(q_1)q_2^j\lt(f_{\G,g+1}(q_2)-q_2^{-g-1}-\sum_{i=j-g}^{g}a_\G(g+1,-i)q_2^{-i}\rt)+\sum_{j=2g+1}^{\i}\sum_{k=-g}^\i  f_{\G,j}(q_1)q_2^j a_{\G}(g+1,k)q_2^k \cr
=&\sum_{j=g+1}^{2g}f_{\G,j}(q_1)f_{\G,g+1}(q_2)q_2^j-\sum_{j=g+1}^{2g}f_{\G,j}(q_1)q_2^{j-g-1}-\sum_{j=g+1}^{2g}f_{\G,j}(q_1)\sum_{i=j-g}^{g}a_\G(g+1,-i)q_2^{j-i}\cr
&+\lt(F_\G(q_1,q_2)-1-\sum_{j=g+1}^{2g}f_{\G,j}(q_1)q_2^{j}\rt)(f_{\G,g+1}(q_2)-q_2^{-g-1}),\cr
%=&f_{\G,2}(q)F_\G(p,q)-q^{-2}F_\G(p,q)-a_{\G}(2,-1)f_{\G,2}(p)q-f_{\G,2}(q)+q^{-2},\cr
&\cr
\mathrm{(C)}=&-\sum_{\ell=1}^g f_{\G,\ell+g+1}(q_1)\sum_{m=g+1}^{\i}b_{\G}(-\ell,m)q_2^m=-\sum_{\ell=1}^g f_{\G,\ell+g+1}(q_1)(h_{\G,-\ell}(q_2)-q_2^\ell)\cr
=&-\sum_{\ell=1}^g f_{\G,\ell+g+1}(q_1)h_{\G,-\ell}(q_2)+\sum_{\ell=1}^g f_{\G,\ell+g+1}(q_1)q_2^\ell,\cr
&\cr
\mathrm{(D)}=&-\sum_{j=g+1}^{2g}\sum_{\ell=j-g}^g a_{\G}(g+1,\ell-j)f_{\G,j}(q_1)\sum_{m=g+1}^\i b_{\G}(-\ell,m)q_2^m\cr
=&-\sum_{j=g+1}^{2g}\sum_{\ell=j-g}^g a_{\G}(g+1,\ell-j)f_{\G,j}(q_1)(h_{\G,-\ell}(q_2)-q_2^{\ell}),\cr
&\cr
\mathrm{(E)}=&\sum_{m=g+1}^{\i}a_{\G}(g+1,m)q_2^m-\sum_{m=g+1}^{\i}b_{\G}(g+1,m)q_2^m-\sum_{i=-g}^g a_{\G}(g+1,-i) \sum_{m=g+1}^{\i} b_{\G}(i,m)q_2^m\cr
=&(f_{\G,g+1}(q_2)-q_2^{-g-1}-\sum_{i=-g}^g a_{\G}(g+1,i)q_2^{i})-(h_{\G,g+1}(q_2)-q_2^{-g-1})-\sum_{i=-g}^g a_{\G}(g+1,-i)(h_{\G,i}(q_2)-q_2^{-i})\cr
=&f_{\G,g+1}(q_2)-h_{\G,g+1}(q_2)-\sum_{i=-g}^g a_{\G}(g+1,-i)h_{\G,i}(q_2).
\end{align*}
 After adding (A) through (E), we find that
\begin{align*}
f_{\G,g+1}(q_1)F_\G(q_1,q_2)=&f_{\G,g+1}(q_2)F_\G(q_1,q_2)-h_{\G,g+1}(q_2)-\sum_{\ell=-g}^g a_\G(g+1,-\ell)h_{\G,\ell}(q_2)\\
&\qquad-\sum_{\ell=1}^g f_{\ell+g+1}(q_1)h_{\G,-\ell}(q_2) -\sum_{j=g+1}^{2g}\sum_{\ell=j-g}^{g}a_\G(g+1,\ell-j)f_j(q_1)h_{-\ell}(q_2),
\end{align*}
which proves the Proposition, because $a_\G(g+1,-g-1)=1.$
\end{proof}

\begin{proof}[Proof of Theorem \ref{genf}]
Since $\frac{d}{d\t}f_{\G,g+1}(\t)\in S_2^{!,\i}(\G)$, it is a linear combination of $h_{\G,n}(\t)$ ($n\geq -g$). Hence 
 \begin{align}\label{fgder} \nonumber
 -\frac{1}{2\pi i}\frac{d}{d\t_2}(f_{\G,g+1}(\t_2))&=(g+1)q_2^{-g-1}+\sum_{\ell=1}^g \ell a_{\G}(g+1,-\ell)q_2^{-\ell}-\sum_{n=1}^\i n a_{\G}(g+1,n) q_2^n\\ 
 &=(g+1)h_{\G,g+1}+\sum_{\ell=1}^g \ell a_{\G}(g+1,-\ell)h_{\G,\ell}-\sum_{n=1}^g n a_{\G}(g+1,n)h_{\G,-n}\\  \nonumber
&= \sum_{\ell=-g}^{g+1} \ell a_{\G}(g+1,-\ell)h_{\G,\ell}.
 \end{align}
Employing  \eqref{fgder} and Proposition \ref{fgt}, we find another representation of $F_\G(q_1,q_2)$ in Theorem \ref{genf}.
\end{proof}

\begin{proof}[Proof of Theorem \ref{genj}]

Recall from \eqref{fjg} that for $m\geq g+1$ 
$$f_{\G,m}=J_{\G,m}+\sum_{l=1}^{g} a_{\G}(m,-l)J_{\G,l}$$
and also recall that $f_{\G,m}=0$ for $1\leq m\leq g$, $f_{\G,0}=1$ and $J_{\G,0}=1$.
Hence using Theorem \ref{Tgrid} in the third equality below, we obtain 
\begin{align*}%\label{eqgenj}
F_\G(p,q)=&\sum_{m=0}^\infty f_{\G,m}(p)q^m=1+\sum_{m=g+1}^\infty (J_{\G,m}(\t_1)+\sum_{l=1}^{g}a_{\G}(m,-l)J_{\G,l}(\t_1))q^m\\ \nonumber
=&1+\sum_{m=g+1}^\infty J_{\G,m}(\t_1)q^m-\sum_{m=g+1}^\infty\sum_{l=1}^{g}J_{\G,l}(\t_1) b_{N}(-l,m)q^m\\ \nonumber
=&1+\sum_{m=g+1}^\infty J_{\G,m}(\t_1)q^m-\sum_{l=1}^{g}J_{\G,l}(\t_1)(h_{\G,-l}(\t_2)-q^l)\\ \nonumber
=&\sum_{m=0}^\infty J_{\G,m}(\t_1)q^m-\sum_{l=1}^{g}J_{\G,l}(\t_1)h_{\G,-l}(\t_2). \nonumber
\end{align*}
This proves our last theorem.
\end{proof}

\end{document}